\theoremstyle{plain}
\newtheorem{lemma}{Lemma}[section]
\newtheorem{prop}[lemma]{Proposition}
\newtheorem{theorem}[lemma]{Theorem}
\newtheorem{corollary}[lemma]{Corollary}
\theoremstyle{definition}
\newtheorem{definition}[lemma]{Definition}
\newtheorem{es}[lemma]{Example}
\newtheorem{rmk}[lemma]{Remark}
\tikzstyle{point}=[circle, draw, fill=black, inner sep=0pt, minimum size=4pt]
\tikzstyle{line}=[line width=1.5pt, black!70!white]
\newcommand{\R}{\mathbb{R}}
\newcommand{\C}{\mathbb{C}}
\newcommand{\p}{\mathbb{P}}
\newcommand{\sL}{\mathcal{L}}
\newcommand{\sE}{\mathcal{E}}
\newcommand{\sW}{\mathcal{W}}
\newcommand{\sU}{\mathcal{U}}
\newcommand{\sV}{\mathcal{V}}
\newcommand{\sF}{\mathcal{F}}
\newcommand{\de}{\partial}
\newcommand{\nb}[2]{\textsl{{NB}.{#1}.{#2}}}
\newcommand{\iii}{\textit{i}\,}
\newcommand{\rk}{\ensuremath{\mathrm{rk}}}
\newcommand{\iso}{\mathcal{Q}_{\mathrm{iso}}}
\newcommand{\SO}{\operatorname{SO}}
\newcommand{\Eig}[1]{\mathcal{E}\!\left( {#1} \right)}
\newcommand{\scl}[2]{\left\langle {#1}, {#2} \right\rangle}
\newcommand\scalemath[2]{\scalebox{#1}{\mbox{\ensuremath{\displaystyle #2}}}}
\title{Eigenpoint collinearities of plane cubics}
\author[Valentina Beorchia]{Valentina Beorchia$^{\circ}$}
\address[\textsc{Valentina Beorchia}]{University of Trieste,
Department of Mathematics, Informatics and Geosciences,
Via Valerio 12/1, 34127 Trieste, Italy}
\email{beorchia@units.it}
\thanks{$^{\circ}$The researcher is a member of ``Gruppo Nazionale per le Strutture Algebriche, Geometriche e le loro Applicazioni'', INdAM. She is partially supported by MUR funds: PRIN project GEOMETRY OF ALGEBRAIC STRUCTURES: MODULI, INVARIANTS, DEFORMATIONS, PI Ugo Bruzzo, Project code: 2022BTA242.}
\author[Matteo Gallet]{Matteo Gallet$^{\diamond}$}
\address[\textsc{Matteo Gallet}]{University of Trieste,
Department of Mathematics, Informatics and Geosciences,
Via Valerio 12/1, 34127 Trieste, Italy}
\email{matteo.gallet@units.it}
\thanks{$^{\diamond}$The researcher is a member of ``Gruppo Nazionale per le Strutture Algebriche, Geometriche e le loro Applicazioni'', INdAM}
\author[Alessandro Logar]{Alessandro Logar}
\address[\textsc{Alessandro Logar}]{University of Trieste,
Department of Mathematics, Informatics and Geosciences,
Via Valerio 12/1, 34127 Trieste, Italy}
\email{logar@units.it}
\date{}
\begin{document}

\begin{abstract}
 Given a ternary homogeneous polynomial, the fixed points of the map from~$\p^2$ to itself defined by its gradient are called its \emph{eigenpoints}. We focus on cubic polynomials, and analyze configurations of eigenpoints that admit one or more alignments. We give a classification and explicit equations, in the coordinates of the points, of all configurations: this is accomplished by using both geometric techniques and by an extensive use of computer algebra.
\end{abstract}

\maketitle

\section{Introduction}
\label{introduction}

Tensors are natural generalizations of matrices in higher dimension.
Important notions related to matrices, as for example those of \emph{rank} and of \emph{eigenvector},
can be generalized to tensors and may provide information about them.
Since roughly twenty years ago, eigenvectors of tensors have been object of study from various points of view.
Our interest will be focused on a specific class of symmetric tensors, namely cubic ternary forms, and particular geometric configurations of their eigenvectors.
Before delving into the more technical aspects of our problem,
we begin with a short resume of the existing literature on eigenvectors of tensors.

\subsection*{Eigenvectors of tensors in the literature}
In 2005, Lim \cite{Lim} and Qi \cite{Qi} independently introduced the notions of eigenvalues and eigenvectors for tensors.
Since then, these two concepts have been of interest in several applications,
as in the study of hypergraphs and dynamical systems;
see \cite[Section~4]{QZ} for an introduction or \cite{GMV} for recent developments.
Eigenvectors of tensors also play an important role in the best rank-one approximation problem,
which is relevant in data analysis and signal processing.
Indeed, by Lim's Variational Principle \cite{Lim}, the critical rank-one symmetric tensors for a symmetric tensor~$f$ are of the type~$v^d$, where $v$ is an eigenvector of~$f$.
This has applications in low-rank approximation of tensors (see \cite{OttSod}) as well as maximum likelihood estimation in algebraic statistics.
For another application in optimization, consider the problem of maximizing a polynomial function~$f$ over the unit sphere in~$\R^{n+1}$:
the eigenvectors of the symmetric tensor~$f$ are critical points of this optimization problem.

The geometry of eigenvectors is intimately related with the Waring decomposition of a polynomial,
corresponding to the symmetric Tucker decomposition of the associated symmetric tensor,
as clarified in~\cite{DOT} and in~\cite{Ott}.
Indeed, any best rank~$k$ approximation of a symmetric tensor, when it exists, lies in the linear space,
called \emph{critical space}, spanned by the rank~$1$ tensors of the type $v_i^{\otimes d}$, where $v_i$ varies among all the eigenvectors.
Therefore, a deep comprehension of the geometry of eigenvectors will
go towards an improvement on the insight of low rank approximation problems.
A first evidence in this direction is given by the so called ODECO (``orthogonally decomposable'') tensors,
that is, symmetric tensors that admit a decomposition
$\sum _{i=0}^n v_i ^{\otimes d}$ with $v_0, \dotsc, v_n$ an orthogonal family (see \cite{Rob, BDHE} for further details);
the $v_i$'s turn out to be eigenvectors.
Finally, in \cite{OO} Oeding and Ottaviani employ eigenvectors of tensors in an algorithm to compute Waring decompositions of homogeneous polynomials.
Ottaviani points out in \cite[Section~8]{Ottaviani24} the close connection between eigenvectors of specific symmetric tensors and the so-called L\"uroth quartics.

\subsection*{Eigenschemes of homogeneous ternary forms}
Given a vector space $V$, we fix an isomorphism $V \cong V^\vee$ where $V^\vee$ is the dual vector space.
A non-zero vector $v \in V$ is an {\it eigenvector} of a tensor
 $T\in V ^{\otimes d} \cong ( V^\vee)^{\otimes d-1}\otimes V$ if there exists $\lambda\in \C$ such that
 $T(v^{\otimes d-1})= \lambda v$.
In the present paper, we will focus on the geometry of configurations of eigenvectors of order $3$ ternary symmetric tensors,
that is elements of ${\rm Sym}^3 \C^3$.

Recall that, by fixing a nondegenerate quadratic form and an orthonormal basis of $\C^3$, the space ${\rm Sym}^d \C^3$ can be identified with the space of homogeneous ternary forms
$\C[x,y,z]_d$, and it turns out that the general definition of eigenvector $v$ for $f \in \C[x,y,z]_d$ specializes
to $\nabla f (v)=\lambda v$, see for instance \cite[Section 1]{ASS}.
Such a condition is preserved under scalar multiplication,
so it is natural to regard eigenvectors as points in~$\p^2$, and to talk about \emph{eigenpoints}. Moreover,
by construction, the set of eigenpoints of $f$ is the zero locus of the ideal of $2 \times 2$ minors of the matrix
\begin{equation}
\label{eq:def_matrix}
\begin{pmatrix}
    x & y & z \\
    \partial_x f  & \partial_y f & \partial_z f
\end{pmatrix} \,,
\end{equation}
thus it can be given a natural structure of determinantal scheme called \emph{eigenscheme}, which we denote by $\Eig{f}$.
Similarly, a natural scheme structure can be given to the eigenpoints of any
tensor $T \in (\C^3)^{\otimes d}$, see \cite[Section 1]{ASS}, and such a scheme will be denoted by $\Eig{T}$.

It has been proved in \cite[Corollary 5.8]{Abo} that $\Eig{f}$ is in general $0$-dimensional and reduced,
and its degree has been determined in \cite[Theorem 2.1]{CartSturm}.
In the particular case of $d=3$ we generally have
$\deg \Eig{f}=7$; however,
 there are cases when it is one-dimensional, or non-reduced.

A first geometric characterization of eigenschemes of ternary tensors has been given in
\cite[Theorem 5.1]{ASS}, and it states that seven points in~$\p^2$ are eigenpoints of some tensor if and only if no six of them lie on a conic.
Moreover, by \cite[Theorem 5.7]{BGV},
the eigenscheme~$\Eig{f}$ of a general ternary cubic contains no triple of collinear points.

However, there are several examples of cubic forms, whose eigenscheme contains one or more triples of aligned points,
as for instance the Fermat cubic polynomial $f=x^3+y^3+z^3$.
Understanding these situations requires a careful analysis of the alignement conditions.

\subsection*{Our results}
The goal of this paper is to classify all the situations when we have one or more triples of aligned points inside a $0$-dimensional reduced eigenscheme of a ternary cubic form.
More precisely, we
\begin{itemize}
    \item determine conditions imposed on the eigenpoints and on ternary cubic forms by the presence of aligned eigenpoints;
    \item compute the degree of the closure of the locus in~$\p^9$ of ternary cubic forms that admit an aligned triple of eigenpoints;
    \item characterize the situations yielding a one-dimensional eigenscheme;
    \item classify all possible combinatorial configurations of aligned eigenpoints when the eigenscheme is $0$-dimensional and reduced.
\end{itemize}
All the conditions on the eigenpoints will be expressed in terms of their homogeneous coordinates, and the equations that we will find have to be though as equations in $(\p^2)^7=\p^2 \times \p^2 \times \dotsb \times \p^2$.

Finally, we would like to point out that our approach gives some hints towards the problem of finding equations for the locus of $7$-uple of points, which form an eigenscheme of some symmetric tensor; this is an open question, and it has been posed in \cite{ASS} and \cite{Ottaviani24}.

\subsection*{Content of the paper}
Here is a description of the content of each section.
\Cref{invariance} describes the action of special orthogonal matrices on ternary forms, which we repeatedly use in our analyses to "pin down" one of the eigenpoints to two specific cases, thus greatly speeding up the symbolic computations we rely on.
\Cref{conditions} begins our analysis by examining what conditions on the space of ternary cubics are imposed by the existence of aligned eigenpoints. In particular, three kinds of conditions, named~$\delta_1$, $\bar{\delta}_1$, and~$\delta_2$, emerge and play a prominent role in describing configurations of two aligned triples that share a point, here called $V$- configurations.
\Cref{V-configurations} sets the spotlight on $V$- configurations, exhausting all the possible special cases that may appear.
\Cref{locus_one_alignment} introduces the locus of ternary cubics with an alignment in the eigenscheme, proves its irreducibility, and computes its degree.
\Cref{positive_dim} classifies positive-dimensional eigenschemes of ternary cubics.
\Cref{further_alignments} examines all the possible configurations of alignments in a $0$-dimensional reduced eigenscheme.

\subsection*{Symbolic computation}
Several proofs employ computer algebra verifications in an essential way.
We rely on SageMath \cite{SageMath} for the operations we need
(mainly, manipulations of polynomials and polynomial ideals).
To allow the reader for an easier use of these computations, we have prepared Jupyter notebooks, available at \cite{Notebooks}.
Each notebook has a name of the form \nb{XX}{CODE} where \textit{XX} denotes the section and \textit{CODE} refers to the result it proves.

\subsection*{Notation and preliminaries}
We work in $\p^2_\C$, endowed with the canonical projective coordinate system.
We denote by~$P \vee Q$ the line through two distinct points~$P$ and~$Q$.
We denote by~$\iso$ the so-called \emph{isotropic conic}, namely the conic of equation
\[
 x^2 + y^2 + z^2 = 0 \,.
\]
Since there will be no ambiguity, sometimes we will identify $\iso$ with the
defining polynomial.

The quadratic form associated to~$\iso$ is denoted $\left\langle \cdot, \cdot \right\rangle$ and throughout the paper, the notion of \emph{orthogonality} refers to this quadratic form.

For $P_i=(A_i:B_i:C_i)$ and $P_j=(A_j:B_j:C_j)$, we shall write
\[
  s_{ij} = \scl{P_i}{P_j} = A_i A_j + B_i B_j + C_i C_j \,.
\]
Such an expression has to be intended as a bihomogeneous polynomial in the coordinates of~$P_i$ and~$P_j$. If we think of the coordinates as indeterminates, the zero set of~$s_{ij}$ is well defined in $\p^2 \times \p^2$.

Similarly, by $P_1 \times P_2$ we shall denote the \emph{cross product} of the representing vectors
\begin{equation}
\label{eq:cross_product}
  P_1 \times P_2 =
  (B_1 C_2 - C_1 B_2, \, C_1 A_2 - A_1 C_2, \, A_1 B_2 - B_1 A_2) \,.
\end{equation}
The resulting projective point is well defined. Such a notation will be used, for instance, in \Cref{definition:delta1}.

In a similar fashion, whenever we define a point in~$\p^2$ by means of the expressions we introduce in the text, as for example in \Cref{lemma_delta_case2}, where we write
\[
  P_3 = (s_{12}^2+s_{11}s_{22}) \, P_1 - 2s_{11}s_{12} \, P_2 \,,
\]
what we mean is that the right hand side is a (multi-)homogeneous polynomial, which therefore defines a rational function
\[
  \p^2 \times \p^2 \dashrightarrow \p^2
\]
and the point on the left hand side is the image of $P_1$ and $P_2$ under this map.

As said above, given a homogeneous form $f \in \C[x,y,z]_d$ of degree~$d$, the \emph{eigenscheme}~$\Eig{f} \subset \p^2$ of~$f$ is the determinantal scheme defined by the $2 \times 2$ minors of the matrix in \Cref{eq:def_matrix}.
Throughout the paper, we denote them by $g_1, g_2$ and $g_3$ and we fix the following choice:
\begin{equation}
\label{eq:def_minors}
 g_1 = x \partial_y f - y \partial_x f \,, \quad
 g_2 = x \partial_z f - z \partial_x f \,, \quad
 g_3 = y \partial_z f - z \partial_y f \,.
\end{equation}

The following result (see \cite[Theorem 2.1]{CartSturm}, \cite{ASS}, \cite{OO}, and \cite[Equation~5.2]{Abo}) determines the degree of zero-dimensional eigenschemes.

\begin{theorem}
\label{theorem:nonempty}
Let $d \ge 2$ and let $T \in (\C^{n+1})^{\otimes d}$.
If $\dim \Eig{T}=0$, then
\[
  \deg \Eig{T} =
  \frac{(d-1)^{n+1}-1}{d-2} =
  \sum_{i=0}^{n} (d-1)^i \,.
\]
In the matrix case, that is $d = 2$, we use the formula on the right, which evaluates to $n+1$.
\end{theorem}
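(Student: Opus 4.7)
The plan is to realize the eigenscheme as the zero locus of a global section of a vector bundle on $\p^n$ and then compute its degree via Chern classes. Twisting the Euler sequence
\[
0 \to \mathcal{O}_{\p^n} \to \mathcal{O}_{\p^n}(1)^{n+1} \to T_{\p^n} \to 0
\]
by $\mathcal{O}(d-2)$ yields
\[
0 \to \mathcal{O}(d-2) \to \mathcal{O}(d-1)^{n+1} \to T_{\p^n}(d-2) \to 0.
\]
The tensor $T$ determines, via the map $v \mapsto T(v^{\otimes d-1})$, an $(n+1)$-tuple $(f_0,\ldots,f_n)$ of forms of degree $d-1$, i.e.\ a section of $\mathcal{O}(d-1)^{n+1}$. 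Its image $\sigma \in H^0(T_{\p^n}(d-2))$ vanishes at $[v]\in\p^n$ precisely when $(f_0(v),\ldots,f_n(v))$ lies in the image of $\mathcal{O}(d-2) \hookrightarrow \mathcal{O}(d-1)^{n+1}$, namely when there exists $\lambda$ with $f_i(v) = \lambda v_i$. Thus $\Eig{T}$ is the scheme of zeros of $\sigma$.

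Since $T_{\p^n}(d-2)$ has rank $n$, whenever $\dim \Eig{T} = 0$ the length of $\Eig{T}$ is the top Chern class $c_n(T_{\p^n}(d-2))$ evaluated on $\p^n$. From $c(T_{\p^n}) = (1+h)^{n+1}$, where $h$ is the hyperplane class, and the twist formula $c_n(E\otimes L) = \sum_{i=0}^n c_i(E)\, c_1(L)^{n-i}$, one obtains
\[
c_n(T_{\p^n}(d-2)) = \sum_{i=0}^n \binom{n+1}{i}(d-2)^{n-i}\, h^n.
\]
Multiplying by $(d-2)$ and applying the binomial theorem gives $(d-2)\deg \Eig{T} = (1+(d-2))^{n+1} - 1 = (d-1)^{n+1} - 1$, which rearranges to the claimed formula (and reduces to $n+1$ in the matrix case by L'Hôpital-style passage to the limit, or directly via $\sum_{i=0}^n 1^i$).

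The only nontrivial point is checking that the ideal cut out scheme-theoretically by $\sigma$ coincides with the determinantal ideal generated by the $2\times 2$ minors of the matrix in \eqref{eq:def_matrix}. This is straightforward once one works locally on the standard affine charts of $\p^n$: on the chart $x_j\neq 0$, the bundle $T_{\p^n}(d-2)$ is locally free, and the image of $(f_0,\ldots,f_n)$ in the quotient is encoded exactly by the minors $x_j f_i - x_i f_j$ for $i\neq j$, which generate the same ideal as all $2\times 2$ minors. The main technical step is therefore bookkeeping with the Euler sequence; the Chern class computation itself is formal.
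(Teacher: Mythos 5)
Your proof is correct; note that the paper does not actually prove \Cref{theorem:nonempty} but only recalls it with citations, and your computation of $c_n\bigl(T_{\p^n}(d-2)\bigr)$ via the twisted Euler sequence, with the eigenscheme realized as the zero scheme of the section induced by $v \mapsto T(v^{\otimes d-1})$, is precisely the standard argument used in the cited sources. The one non-formal step --- that the zero scheme of $\sigma$ carries the same scheme structure as the determinantal scheme of $2\times 2$ minors of the matrix in \Cref{eq:def_matrix}, so that the hypothesis $\dim \Eig{T}=0$ lets you equate its length with the degree of the top Chern class --- is correctly identified and settled by your local computation on the charts $x_j \neq 0$.
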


We point out that \Cref{theorem:nonempty} holds also in the non-reduced case. However, a result by H.\ Abo (\cite[Corollary 5.8]{Abo}) guarantees that general forms have $0$-dimensional and reduced eigenschemes.

\begin{definition}
\label{definition:eigendiscriminant}
An order~$3$ tensor $T \in (\C^3)^{\otimes 3}$ is called \emph{regular} if its eigenscheme is reduced of dimension~$0$.
We set
\[
 \Delta_{3,3} := \{[T]\in \p \bigl( (\C^3)^{\otimes 3} \bigr) \ | \ \Eig{T} \textrm{\ is \ not \ regular} \} \subset \p \bigl( (\C^3)^{\otimes 3} \bigr) \,.
\]
The locus~$\Delta_{3,3}$ is called the \emph{eigendiscriminant} (see \cite[Definition 5.5]{Abo}) and is an irreducible hypersurface (see \cite[Corollary 5.8]{Abo}).
\end{definition}

As already mentioned, a regular eigenscheme of a cubic form never contains six points on a conic by \cite[Theorem 5.1]{ASS}. As a consequence we have the following result.

\begin{lemma}
\label{lemma:no_4_aligned}
Given $f \in \C[x,y,z]_3$,
if $\Eig{f}$ is regular, then it contains no $4$ or more aligned points.
Moreover, if $\Eig{f}$ contains two aligned triples, they must share a point.
\end{lemma}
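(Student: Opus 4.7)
The plan is to reduce both claims to a direct application of the result from \cite[Theorem 5.1]{ASS} cited just above the lemma, namely that seven points forming an eigenscheme of a tensor cannot have six of them on a conic. Since a regular eigenscheme of a cubic has exactly $\deg \Eig{f}=7$ points by \Cref{theorem:nonempty}, this six-on-a-conic criterion applies directly. The key observation is that a pair of lines is a (reducible) conic, so any configuration that distributes $\ge 6$ points onto two lines is forbidden.

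For the first assertion, I would suppose toward contradiction that $\ell$ is a line containing at least four eigenpoints of $\Eig{f}$. Among the remaining (at most three) eigenpoints, pick any two of them, and let $\ell'$ be the line through them. Then the reducible conic $\ell \cup \ell'$ contains the four aligned points on $\ell$ plus the two chosen points on $\ell'$, giving six eigenpoints on a conic and contradicting \cite[Theorem 5.1]{ASS}.

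For the second assertion, suppose that $\ell_1$ and $\ell_2$ each support a triple of eigenpoints and that the two triples are disjoint. Then $\ell_1 \cup \ell_2$ is a reducible conic containing six distinct eigenpoints, again contradicting the theorem. The only remaining possibility is that the two triples share at least one point; having two points in common would force four collinear eigenpoints (the union of the two triples would lie on the single line $\ell_1=\ell_2$), which is excluded by the first part of the lemma. Hence they share exactly one point.

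There is no real obstacle here: both parts are immediate once one recognizes that reducible conics (pairs of lines) are the right test conics to apply \cite[Theorem 5.1]{ASS}. The only thing to verify carefully is that in the first part we indeed have at least two further eigenpoints available to determine $\ell'$, which follows because $\deg \Eig{f}=7 \ge 4+2$.
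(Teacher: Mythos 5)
Your proof is correct and is essentially the argument the paper intends: the lemma is presented there as an immediate consequence of the no-six-on-a-conic criterion of \cite[Theorem 5.1]{ASS}, applied exactly as you do to reducible conics (pairs of lines). The only cosmetic point is that if six or seven eigenpoints already lay on $\ell$ there would not be two further points available to define $\ell'$, but in that case six eigenpoints lie on the conic $\ell \cup m$ for an arbitrary line $m$, so the contradiction is even more immediate.
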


It can be expected that the presence of one or more aligned triples of eigenpoints should impose particular geometric constraints, since by \cite[Theorem 5.7]{BGV} this is not the general case.

We conclude this section by recalling the notion of \emph{Geiser map} and by explaining the connection between its contracted locus with alignments of eigenpoints; such a point of view  will be useful in \Cref{V-configurations}.

\begin{definition}
The Geiser map associated with a zero dimensional eigenscheme $\Eig{f}$ is the rational map defined by
\[
  \gamma_{\Eig{f}} \colon \p ^2 \dasharrow \p^2, \quad
  \gamma_{\Eig{F}} (P) = \bigl( g_3(P):-g_2(P):g_1(P) \bigr) \,,
\]
where $g_1, g_2, g_3$ are as in \Cref{eq:def_minors}.
\end{definition}

Geiser maps are a classical topic and several of their properties are understood.
As an example, the map~$\gamma_{\Eig{f}}$ is generically finite of degree~$2$, see for instance \cite[Section~8.7.2]{Dolgachev}.

\begin{lemma}
The Geiser map~$\gamma_{\Eig{f}}$ is surjective and its fiber over a point $Q = (a:b:c)$ is given by
\begin{equation}
\label{eq:fibers}
  \left\{
  \begin{array}{l}
    a x + by + cz = 0 \,, \\[2pt]
    a \, \partial_x f + b \, \partial_y f + c \, \partial_z f = 0 \,,\\
  \end{array}
  \right.
\end{equation}
that is, the intersection between the polar line~$\ell_Q$ relative to the isotropic conic~$\iso$ and~$\mathrm{Pol}_Q f$, the
first polar curve of~$f$ with respect to~$Q$.

In particular, the only possible curves contracted by the Geiser map~$\gamma_{\Eig{f}}$ are lines.
\end{lemma}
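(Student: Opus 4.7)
The plan is to recognize that $\gamma_{\Eig{f}}$ is essentially the cross product. Inspecting \Cref{eq:def_minors} together with \Cref{eq:cross_product}, one sees immediately that for every $P = (x:y:z)$
\[
  \gamma_{\Eig{f}}(P) = (x,y,z) \times \nabla f(P) \,,
\]
so $\gamma_{\Eig{f}}(P) = Q$ means exactly that $Q$ is proportional to this cross product. I would then invoke the standard fact that the cross product of two vectors is orthogonal, with respect to the form defining $\iso$, to each of them; this translates the fiber condition into the pair of relations $\scl{Q}{(x,y,z)} = 0$ and $\scl{Q}{\nabla f(P)} = 0$, which are precisely the two equations in \Cref{eq:fibers}. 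For the converse I would observe that if $P$ satisfies both equations and is not an eigenpoint, then $(x,y,z)$ and $\nabla f(P)$ are linearly independent, hence their common orthogonal is one-dimensional, forcing $Q$ to be proportional to their cross product, that is $\gamma_{\Eig{f}}(P)$.

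Surjectivity would then follow at once: for any $Q \in \p^2$, the fiber is the intersection of the line $\ell_Q$ with the polar curve $\mathrm{Pol}_Q f$ of degree $d-1$, and by Bezout either $\ell_Q \subset \mathrm{Pol}_Q f$ and the fiber contains $\ell_Q$, or the intersection is a zero-dimensional scheme of length $d-1$; in either case the fiber is nonempty, so every $Q$ is attained.

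For the contraction statement I would argue directly from this description of the fibers: if a curve $C$ is contracted to some $Q$, then $C$ is contained in $\gamma_{\Eig{f}}^{-1}(Q) \subset \ell_Q \cap \mathrm{Pol}_Q f$, and since $\ell_Q$ is a line this intersection contains a positive-dimensional subscheme only when $\ell_Q \subset \mathrm{Pol}_Q f$, in which case the unique one-dimensional component is $\ell_Q$ itself; hence $C$ is a line. The whole proof reduces to the cross-product identification plus an elementary Bezout count, and no real obstacle arises; the only thing to keep in mind is the base locus of $\gamma_{\Eig{f}}$, namely the eigenpoints, which sit inside the fiber equations automatically but are not part of the domain of the rational map.
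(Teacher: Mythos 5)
Your proposal is correct and follows essentially the same route as the paper: the paper's proof identifies $\gamma_{\Eig{f}}(P)$ as the intersection of the two lines with coefficient vectors $P$ and $\nabla f(P)$, which is exactly your cross-product description, and the resulting fiber equations coincide. You are somewhat more explicit than the paper about the converse inclusion, the Bezout count for surjectivity, and why a contracted curve must be $\ell_Q$ itself, but these are elaborations of the same argument rather than a different one.
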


\begin{proof}
We observe that for any point $P=(A:B:C) \in \p^2 \setminus \Eig{f}$, the homogeneous coordinates
of the image $\gamma_{\Eig{f}}(P) = \bigl( g_3(P): -g_2(P): g_1(P) \bigr)$ are the ones of the unique point in the intersection of the two lines
\[
  Ax + By+ Cz = 0 \,, \qquad
  \partial_x f(P) \, x + \partial_y f(P) \, y + \partial_z f(P) \, z = 0 \,.
\]
So for any $Q = (a:b:c) \in \p^2$, the fiber~$\gamma_{\Eig{f}}^{-1}(Q)$ consists of the points $P \in \p^2$ such that
\begin{equation}
\label{eq:polars}
  Aa + Bb+ Cc = 0, \quad
  \partial_x f(P) \, a + \partial_y f(P) \, b + \partial_z f(P) \, c = 0 \,,
\end{equation}
which proves that $\gamma_{\Eig{f}}$ is surjective and that \Cref{eq:fibers} holds.
\end{proof}

\begin{prop}
\label{proposition:contract_aligned}
Suppose that $\Eig{f}$ is a zero dimensional eigenscheme of a cubic ternary form.
If $\Eig{f}$ contains a triple of points on a line~$\ell \subset \p^2$, then the Geiser map~$\gamma_{\Eig{f}}$ contracts~$\ell$.
\end{prop}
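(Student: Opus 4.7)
The plan is to identify an explicit point $Q^* \in \p^2$ whose fiber under $\gamma_{\Eig{f}}$ contains $\ell$ entirely, which by the previous lemma amounts to showing that $\ell$ is contained both in a certain polar line with respect to $\iso$ and in a certain first polar conic of $f$.

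Write the equation of $\ell$ as $a^* x + b^* y + c^* z = 0$ and take $Q^* = (a^*:b^*:c^*)$. By definition $\ell$ coincides with the polar line $\ell_{Q^*}$ of $Q^*$ with respect to the isotropic conic. By the previous lemma, the fiber $\gamma_{\Eig{f}}^{-1}(Q^*)$ is the scheme-theoretic intersection $\ell \cap \mathrm{Pol}_{Q^*} f$, where $\mathrm{Pol}_{Q^*} f = a^* \partial_x f + b^* \partial_y f + c^* \partial_z f$ is a conic. So it suffices to prove that $\ell$ is a component of this conic.

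Let $P_1, P_2, P_3$ be the three collinear eigenpoints on $\ell$, with coordinates $P_i = (A_i:B_i:C_i)$. Since each $P_i$ is an eigenpoint, there exists $\lambda_i \in \C$ with $\nabla f(P_i) = \lambda_i P_i$. Consequently
\[
  \mathrm{Pol}_{Q^*} f (P_i) = a^* \partial_x f(P_i) + b^* \partial_y f(P_i) + c^* \partial_z f(P_i) = \lambda_i \bigl( a^* A_i + b^* B_i + c^* C_i \bigr) = 0 \,,
\]
where the last equality uses that $P_i \in \ell$. Hence the conic $\mathrm{Pol}_{Q^*} f$ meets the line $\ell$ in at least three points $P_1, P_2, P_3$, so by B\'ezout the line $\ell$ must be a component of $\mathrm{Pol}_{Q^*} f$.

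It follows that every point $P \in \ell$ satisfies both equations of system~\eqref{eq:fibers} for $Q = Q^*$, and therefore lies in the fiber $\gamma_{\Eig{f}}^{-1}(Q^*)$. Equivalently, $\gamma_{\Eig{f}}$ contracts $\ell$ to the point $Q^*$. There is no real obstacle here beyond ensuring that the three eigenpoints are honest (not infinitely near) collinear points so that B\'ezout applies as stated; this is granted by the hypothesis that $\Eig{f}$ is zero-dimensional and that the three points genuinely lie on $\ell$.
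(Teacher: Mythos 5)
Your proof is correct, but it follows a genuinely different route from the paper's. The paper identifies the codomain of the Geiser map with $\p\bigl(I_{\Eig{f}}(3)^{\vee}\bigr)$, reads $\gamma_{\Eig{f}}(P)$ as the pencil of cubics in the net through $\Eig{f} \cup \{P\}$, and observes that for $P \in \ell$ this pencil always splits as the fixed line $\ell$ plus a pencil of conics through the residual four eigenpoints, hence does not depend on $P$. You instead work directly with the fiber description from the preceding lemma: you name the candidate image point $Q^{*}$ (the point whose polar line with respect to $\iso$ is $\ell$), and use the eigenpoint relation $\nabla f(P_i) = \lambda_i P_i$ together with $\scl{P_i}{Q^{*}} = 0$ to show that the polar conic $\mathrm{Pol}_{Q^{*}} f$ vanishes at the three distinct collinear eigenpoints, whence by B\'ezout it contains $\ell$ (the degenerate case $\mathrm{Pol}_{Q^{*}} f \equiv 0$ being trivial). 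Your argument is more elementary and has the small added benefit of exhibiting the image point explicitly as the pole of $\ell$ with respect to the isotropic conic, while the paper's argument is more conceptual and ties the statement to the classical picture of the Geiser involution attached to seven base points; note also that your use of the proportionality $\nabla f(P_i)=\lambda_i P_i$ silently covers the case $\lambda_i=0$ (a singular point of the cubic), which is exactly what is needed, and that ``triple of points'' should indeed be read as three distinct points for B\'ezout to apply as you state.
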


\begin{proof}
We identify the codomain $\p^2$ with $\p\bigl(I_{\Eig{f}}(3)^{\vee}\bigr)$, namely the projectivization of the dual of the space of degree~$3$ forms in the homogeneous ideal of~$\Eig{f}$.
For any $P \in \p^2 \setminus \Eig{f}$, the point~$\gamma_{\Eig{f}} (P)\in \p \bigl(I_{\Eig{f}}(3)^{\vee}\bigr)$ corresponds to the line~$\ell_P$ given by
\[
  g_3 (P) \, x - g_2(P) \, y + g_1(P) \, z = 0 \,,
\]
which in turn determines the following
pencil of cubics $\ell_P$ in the linear system $\p \bigl(I_{\Eig{f}}(3)\bigr)$:
\[
  \lambda \cdot \bigl( g_2(P) \, g_3  -g_3(P) \, g_2\bigr) + \mu \cdot \bigl( g_3(P) \, g_1 - g_1(P) \, g_3 \bigr) \,,
\]
where $(\lambda : \mu) \in \p^1$, assuming that $g_1(P) \neq 0$ (the other cases are similar).
It is clear that the base locus of the pencil~$\ell_P$ contains $\Eig{f} \cup \{P\}$.
So $\gamma_{\Eig{f}} \colon \p^2 \dasharrow \p \bigl( I_{\Eig{f}}(3)^\vee \bigr)$
associates with each point $P \in \p^2 \setminus \Eig{f}$ the pencil of cubics through $\Eig{f} \cup P$.

If $\Eig{f}$ contains a triple of points on a line~$\ell$, for any $P \in \ell \setminus \Eig{f}$ such a pencil of cubics has $\ell$ as a fixed component and the other component varies in a pencil of conics through the remaining $4$ points, so $\gamma_{\Eig{f}}$ is constant on~$\ell$.
\end{proof}

\section{Invariance under the action of orthogonal matrices}
\label{invariance}

In what follows, it will be useful to fix particular coordinates for points and lines related to eigenschemes.
To do that, we employ a property of invariance of eigenschemes with respect to the action of the following group.

\begin{definition}
We define $\SO_3(\C)$ to be the complexification of the group of special orthogonal real matrices, namely
\[
  \SO_3(\C) :=
  \bigl\{
    M \in \mathrm{GL}_3(\C) \, \mid \,
    M \prescript{t} {}M = I_3 \ \text{and} \ \det(M) = 1
  \bigr\} \,.
\]
The group $\SO_3(\C)$ acts on $\C^3$ by matrix multiplication:
\[
  \begin{array}{ccc}
    \SO_3(\C) \times \C^3 & \rightarrow & \C^3 \\
    (M, v) & \mapsto & Mv
  \end{array}
\]
Since all the elements of~$\SO_3(\C)$ are invertible, the latter action descends to an action on $\p^2$.

Moreover, the group~$\SO_3(\C)$ acts also on ternary forms via
\[
  M \cdot f (x,y,z) := f(M^{-1} \cdot \prescript{t} {}( x \ y \ z ) ).
\]
\end{definition}

\begin{prop}
\label{proposition:two_orbits}
The action of~$\SO_3(\C)$ on $\p^2$ has two orbits:
\begin{align*}
  \mathcal{O}_1 &:=
  \bigl\{
    P \in \p^2 \, \mid \,
    P = (a:b:c) \ \text{with} \ a^2 + b^2 + c^2 = 0
  \bigr\} \\
  \mathcal{O}_2 &:= \p^2 \setminus \mathcal{O}_1
\end{align*}
A representative for~$\mathcal{O}_1$ is~$(1:\iii:0)$ and a
representative for~$\mathcal{O}_2$ is~ $(1:0:0)$.
The orbit~$\mathcal{O}_1$ is the set of points of the isotropic conic~$\iso$.
\end{prop}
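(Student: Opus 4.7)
The plan is to first observe that $\SO_3(\C)$ preserves the quadratic form $\scl{\cdot}{\cdot}$---the defining identity $M^tM = I_3$ is equivalent to $\scl{Mv}{Mw} = \scl{v}{w}$ for all $v, w \in \C^3$---so $\iso$ is $\SO_3(\C)$-invariant, and both $\mathcal{O}_1$ and $\mathcal{O}_2$ are unions of orbits. It then suffices to prove transitivity of the action on each.

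For $\mathcal{O}_2$, given $P = (a:b:c)$ with $a^2+b^2+c^2 \neq 0$, I would rescale the representative so that $\scl{v_1}{v_1} = 1$, where $v_1 = (a,b,c)^t$ (possible upon choosing a square root). Non-degeneracy of $\scl{\cdot}{\cdot}$ on~$\C^3$ together with $\scl{v_1}{v_1} \neq 0$ guarantees that the restriction to the hyperplane $v_1^\perp$ is non-degenerate; standard linear algebra over~$\C$ then provides an orthonormal basis $v_2, v_3$ of $v_1^\perp$. Assembling $v_1, v_2, v_3$ as the columns of a matrix yields an element of $\mathrm{O}_3(\C)$; flipping the sign of~$v_3$ if necessary produces $M \in \SO_3(\C)$ sending $(1:0:0)$ to $P$.

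For $\mathcal{O}_1$, the analogous strategy fails because an isotropic vector cannot be completed to an orthonormal basis. Instead, I would use the standard rational parametrization $\p^1 \to \iso$, $[p:q] \mapsto (p^2-q^2 : \iii(p^2+q^2) : -2pq)$, which is an isomorphism sending $[1:0]$ to $(1:\iii:0)$. The induced action of $\SO_3(\C)$ on $\iso \cong \p^1$ is by projective linear transformations, and the classical identification $\SO_3(\C) \cong \mathrm{PGL}_2(\C)$ realizes it as the M\"obius action on $\p^1$, which is $3$-transitive---so $\mathcal{O}_1$ is a single orbit.

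The main obstacle I foresee is the transitivity on $\mathcal{O}_1$: the argument above is conceptually clean but leans on the identification $\SO_3(\C) \cong \mathrm{PGL}_2(\C)$. A more self-contained alternative is to exhibit, for each isotropic point, an explicit element of $\SO_3(\C)$ sending $(1:\iii:0)$ to it---for instance, built from the complex rotations $R_z(\theta)$ and $R_x(\varphi)$ about the coordinate axes, whose entries are trigonometric functions of the complex parameters. A direct computation shows that the image of $(1,\iii,0)^t$ under $R_z(\theta) R_x(\varphi)$ sweeps out the affine chart $c \neq 0$ of~$\iso$, and one additional rotation covers the remaining points. Either path is short, but the explicit version requires slightly more bookkeeping.
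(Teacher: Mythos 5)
Your proposal is correct, and it reaches the conclusion by a partly different route than the paper. You rightly begin by noting that $M\,{}^tM = I_3$ forces $\scl{Mv}{Mw} = \scl{v}{w}$, so $\iso$ is invariant and $\mathcal{O}_1$, $\mathcal{O}_2$ are unions of orbits --- a point the paper leaves implicit but which is needed to conclude that there are exactly two orbits rather than one. For $\mathcal{O}_2$ your argument (normalize $\scl{v_1}{v_1}=1$, complete to an orthonormal basis of $\C^3$ using non-degeneracy of the form on $v_1^{\perp}$, adjust the sign of $v_3$ to land in $\SO_3(\C)$) is essentially the conceptual version of what the paper does: its explicit matrix with columns built from $a,b,c$ and $\omega = \sqrt{b^2+c^2}$ \emph{is} such an orthonormal completion, and the paper pays for the explicitness with a small case analysis over which of $b^2+c^2$, $a^2+c^2$, $a^2+b^2$ is non-zero, which your abstract argument avoids entirely. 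For $\mathcal{O}_1$ the two proofs genuinely diverge: the paper simply exhibits a matrix in $\SO_3(\C)$ carrying $(1:\iii:0)$ to an arbitrary isotropic point and checks it, which is elementary and self-contained; you instead parametrize $\iso \cong \p^1$ and invoke the classical isomorphism $\SO_3(\C) \cong \mathrm{PGL}_2(\C)$ acting $3$-transitively on the conic. That is a valid and shorter argument, but it imports a non-trivial classical fact (one should at least know that the restriction map $\SO_3(\C) \to \operatorname{Aut}(\iso)$ is surjective, e.g.\ via the action of $\mathrm{PGL}_2$ on binary quadratics), whereas the paper's computation can be verified line by line. Your fallback via explicit complex rotations would, as you note, need the bookkeeping actually carried out before it counts as a proof, but the $\mathrm{PGL}_2$ route already suffices.
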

\begin{proof}
Suppose that $P \in \p^2$ and $P = (a:b:c)$ with $a^2 + b^2 + c^2 = 0$.
We produce a matrix $M \in \SO_3(\C)$ such that $M \left(\begin{smallmatrix} 1 \\ \iii \\ 0 \end{smallmatrix}\right)$ and $\left(\begin{smallmatrix} a \\ b \\ c \end{smallmatrix}\right)$ are proportional.
Up to relabeling the coordinates, we can suppose that $a \neq 0$.
Hence, by rescaling the coordinates of~$P$, we have $P = (1: b: c)$ with $b^2 + c^2 = -1$.
One can check that the matrix
\[
  M :=
  \begin{pmatrix}
    -1 & 0 & 0 \\
    0 & \iii b & -\iii c \\
    0 & \iii c & \iii b
  \end{pmatrix}
\]
satisfies the requirements.

Now, suppose that $P \in \p^2$ and $P = (a:b:c)$ with $a^2 + b^2 + c^2 \neq 0$.
Up to rescaling, we can suppose that $a^2 + b^2 + c^2 = 1$.
Again, we produce a matrix $M \in \SO_3(\C)$ such that $M \left(\begin{smallmatrix} 1 \\ 0 \\ 0 \end{smallmatrix}\right)$ and~$\left(\begin{smallmatrix} a \\ b \\ c \end{smallmatrix}\right)$ are proportional.
First of all, suppose that $b^2 + c^2 \neq 0$ and let $\omega$ be a root of the polynomial $t^2 - (b^2 + c^2)$ in $\C[t]$.
Then, the matrix
\[
  M :=
  \begin{pmatrix}
    a & \omega & 0 \\
    b & -\frac{ab}{\omega} & \frac{c}{\omega} \\
    c & -\frac{ac}{\omega} & -\frac{b}{\omega}
  \end{pmatrix}
\]
satisfies the requirements.
With the same technique, if $a^2 + c^2 \neq 0$, we can produce a matrix $M \in \SO_3(\C)$ that maps $\left(\begin{smallmatrix} 0 \\ 1 \\ 0 \end{smallmatrix}\right)$ to $\left(\begin{smallmatrix} a \\ b \\ c \end{smallmatrix}\right)$; similarly, when $a^2 + b^2 \neq 0$, we can map $\left(\begin{smallmatrix} 0 \\ 0 \\ 1 \end{smallmatrix}\right)$ to $\left(\begin{smallmatrix} a \\ b \\ c \end{smallmatrix}\right)$.
Since $\left(\begin{smallmatrix} 1 \\ 0 \\ 0 \end{smallmatrix}\right)$, $\left(\begin{smallmatrix} 0 \\ 1 \\ 0 \end{smallmatrix}\right)$, and $\left(\begin{smallmatrix} 0 \\ 0 \\ 1 \end{smallmatrix}\right)$ are all $\mathrm{SO}_3(\C)$-equivalent, the only case to consider is when
\[
  b^2 + c^2 = a^2 + c^2 = a^2 + b^2 = 0 \,,
\]
which, however, can never occur.
\end{proof}

The following result is well known; we recall it for the sake of completeness.

\begin{prop}
Let $M \in \SO_3(\C)$ and let $f$ be a ternary cubic.
Let $P = (A: B: C)$ be a point in~$\p^2$.
Then we have
\[
  P \in \Eig{f}
  \quad \text{if and only if} \quad
  M \cdot \prescript{t} {}(A \ B \ C) \in \Eig{M \cdot f} \,.
\]
\end{prop}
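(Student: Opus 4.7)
The plan is to translate the eigenscheme condition into the equation $\nabla f(P) = \lambda P$ for some $\lambda \in \C$, and then transport this identity through the action of~$M$ using the chain rule, exploiting the defining property $M^{-1} = \prescript{t}{}M$ of orthogonal matrices.

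First I would unwind the definition: $P = (A:B:C)$ belongs to $\Eig{f}$ exactly when there exists $\lambda \in \C$ such that
\[
  \nabla f (A,B,C) = \lambda \, \prescript{t}{}(A \ B \ C),
\]
equivalently, when the $2 \times 2$ minors of the matrix in \Cref{eq:def_matrix} vanish at $P$. Set $g := M \cdot f$, so that $g(v) = f(M^{-1} v)$ for all $v \in \C^3$. By the chain rule applied componentwise,
\[
  \nabla g (v) = \prescript{t}{}(M^{-1}) \, \nabla f (M^{-1} v).
\]
Since $M \in \SO_3(\C)$ satisfies $M \, \prescript{t}{}M = I_3$, one has $M^{-1} = \prescript{t}{}M$ and hence $\prescript{t}{}(M^{-1}) = M$. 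Therefore
\[
  \nabla g (v) = M \cdot \nabla f(M^{-1} v).
\]

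Now I substitute $v = M \cdot \prescript{t}{}(A \ B \ C)$. Then $M^{-1} v = \prescript{t}{}(A \ B \ C)$, and so
\[
  \nabla g (v) = M \cdot \nabla f (A,B,C).
\]
If $P \in \Eig{f}$ with eigenvalue $\lambda$, the right hand side equals $\lambda \, M \, \prescript{t}{}(A \ B \ C) = \lambda v$, showing that $v = M \cdot \prescript{t}{}(A \ B \ C)$ is an eigenvector of $g = M \cdot f$ with the same eigenvalue $\lambda$, i.e.\ $MP \in \Eig{M \cdot f}$.

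For the converse, I would observe that $M^{-1} \in \SO_3(\C)$ as well (orthogonal matrices form a group and $\det(M^{-1})=1$) and $M^{-1} \cdot (M \cdot f) = f$, so applying the forward direction to $M^{-1}$ and $M \cdot f$ yields the reverse implication. I don't expect any real obstacle here: the only subtle point is remembering that the contragredient transformation $\prescript{t}{}(M^{-1})$ reduces to $M$ itself precisely because $M$ is orthogonal, which is exactly what makes the eigenvector equation $\nabla f(P) = \lambda P$ equivariant rather than merely covariant under the $\mathrm{GL}_3(\C)$-action.
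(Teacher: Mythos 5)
Your proof is correct and follows essentially the same route as the paper's: both hinge on the chain-rule identity $\nabla(M\cdot f)(Mv)=\prescript{t}{}{M}^{-1}\,\nabla f(v)$ together with $\prescript{t}{}{M}^{-1}=M$ for $M\in\SO_3(\C)$. The only cosmetic difference is that you phrase the eigenpoint condition as $\nabla f(P)=\lambda P$ and handle the converse by applying the forward direction to $M^{-1}$, whereas the paper phrases it as a rank-one condition on the $3\times 2$ matrix and argues by a chain of equivalences; since $(A:B:C)$ is never the zero vector, the two formulations coincide.
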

\begin{proof}
In this proof, for convenience,
we consider the transpose of the defining matrix of an eigenscheme.
A point~$P = (A: B: C)$ is an eigenpoint for~$f$ if and only if
\begin{equation}
\label{eq:def_matrix_M}
  \mathrm{rk}
  \begin{pmatrix}
    A & \de_x f(P) \\
    B & \de_y f(P) \\
    C & \de_z f(P)
  \end{pmatrix}
  = 1 \,
 \quad \text{or, equivalently} \quad
  \mathrm{rk} \ M \cdot
  \begin{pmatrix}
    A & \de_x f(P) \\
    B & \de_y f(P) \\
    C & \de_z f(P)
  \end{pmatrix}
  = 1 \,.
\end{equation}
By setting $\prescript{t} {}(A' \ B' \ C' ) := M \cdot \prescript{t} {}(A \ B \ C) $ and~$Q := (A':B':C')$, we have that \Cref{eq:def_matrix_M}
is equivalent to
\begin{equation}
\label{eq:transformed}
  \rk
  \begin{pmatrix}
    A' & \\
    B' & M \cdot \nabla f (P) \\
    C' & \\
  \end{pmatrix}
  = 1 \,.
\end{equation}
Now we consider the polynomial $M \cdot f$ and we observe that the chain rule gives
\begin{gather*}
  \partial_x (M\cdot f) = \partial_x \bigl( f(M^{-1} \ \prescript{t} {} (x \ y \ z)) \bigr) = \prescript{t} {}(M^{-1})^{(1)}(\nabla f) \bigl( M^{-1}\ \prescript{t} {} (x \ y \ z) \bigr) \,, \\
  \partial_y (M\cdot f) = \partial_y \bigl( f(M^{-1} \ \prescript{t} {} (x \ y \ z)) \bigr) = \prescript{t} {}(M^{-1})^{(2)}(\nabla f) \bigl( M^{-1}\ \prescript{t} {} (x \ y \ z) \bigr) \,, \\
  \partial_z (M\cdot f) = \partial_z \bigl( f(M^{-1} \ \prescript{t} {} (x \ y \ z)) \bigr) = \prescript{t} {}(M^{-1})^{(3)}(\nabla f) \bigl( M^{-1}\ \prescript{t} {} (x \ y \ z) \bigr) \,,
\end{gather*}
where $(M^{-1})^{(j)}$ denotes the $j$-th column of the matrix $M^{-1}$. Thus
\[
  \nabla (M \cdot f) = \prescript{t} {} M^{-1} \cdot (\nabla f) \bigl(M^{-1}\ \prescript{t} {} (x \ y \ z)\bigr) \,,
\]
so we have
\[
  \nabla (M \cdot f)(Q)=\nabla (M \cdot f)(M \cdot P)=
  \prescript{t} {} M^{-1} \cdot (\nabla f) (M^{-1}\ M \cdot P)=\prescript{t} {} M^{-1} \cdot (\nabla f)(P) \,.
\]
Finally, if we choose $M \in \SO_3(\C)$, we have
$\prescript{t} {} M^{-1}=M$. We deduce that
\Cref{eq:transformed} holds if and only if $Q \in \Eig{M\cdot f}$, so the statement is proved.
\end{proof}

\section{Conditions imposed by aligned eigenpoints}
\label{conditions}

Having a given point as eigenpoint determines linear conditions on the space of ternary cubic forms,
which can hence be encoded into a matrix, called the \emph{matrix of conditions} (\Cref{definition:matrix_conditions}).
We analyze the possible ranks of matrices of conditions relative to three to five points, with one or two aligned triples.

\subsection{The matrix of conditions}

We begin to explore the condition that $P\in \p^2$ is an eigenpoint of a ternary cubic; such linear conditions are encoded in
a $3 \times 10$ matrix, which we now introduce.

\begin{definition}
\label{definition:matrix_conditions}
Consider $\p^9 = \p(\C[x,y,z]_3)$, the space of all ternary cubics.
Throughout this paper, we consider the standard monomial basis for $\C[x,y,z]_3$ and we set~$\mathcal{B}$ to be the following vector
\begin{equation}
\label{eq:vector_basis}
  \mathcal{B} := (x^3, x^2 y, x y^2, y^3, x^2 z, x y z, y^2 z, x z^2, y z^2, z^3)
  \in \bigl( \C[x,y,z]_3 \bigr)^{\oplus 10} \,.
\end{equation}
For $f \in \C[x,y,z]_3$, denote by~$[f]$ the corresponding point in~$\p^9$; we denote by~$w_f$ the (column) vector of coordinates of~$f$, and we will use the same notation for the projective coordinates of~$[f]$.
For a point $P \in \p^2$ with coordinates $(A: B: C)$, the condition that $P$ is an eigenpoint,
i.e., that $g_1(P) = g_2(P) = g_3(P) = 0$ where the $g_i$ are as in \Cref{eq:def_minors},
can be expressed as three linear conditions in the coordinates of~$\p^9$,
hence in the form
\[
  \Phi(P) \cdot w_f = 0 \,,
\]
where $\Phi(P)$ is a $3 \times 10$ matrix with entries depending on $A, B, C$.
The matrix $\Phi(P)$ is called the \emph{matrix of conditions} imposed by~$P$.
We denote by~$\phi_1(P)$, $\phi_2(P)$, and~$\phi_3(P)$ the rows of~$\Phi(P)$.
Written as vectors, they are
\begin{equation}
\label{eq:matrix_conditions_rows}
  \begin{aligned}
    \phi_1(P) &=
    \scalemath{0.9}{
    (-3A^2B, A(A^2 - 2B^2), B(2A^2 - B^2), 3AB^2,
     -2ABC, C(A^2 - B^2), 2 ABC,
     -B C^2, A C^2, 0)} \,, \\
    \phi_2(P) &=
    \scalemath{0.9}{
    (-3A^2 C,
     -2ABC,
     -CB^2,
     0,
     A(A^2-2C^2),
     B(A^2 - C^2),
     AB^2,
     C(2A^2-C^2),
     2ABC,
     3AC^2)} \,,\\
    \phi_3(P) &=
    \scalemath{0.9}{
    (0,
     -A^2C,
     -2ABC,
     -3CB^2,
     A^2 B,
     A(B^2 - C^2),
     B(B^2-2C^2),
     2ABC,
     C(2B^2-C^2),
     3BC^2)} \,.
  \end{aligned}
\end{equation}
More generally, if $P_1, \dotsc, P_n$ are points in the plane, we denote by~$\Phi(P_1, \dotsc, P_n)$ the matrix whose rows are
\[
  \phi_1(P_1), \phi_2(P_1), \phi_3(P_1),
  \dotsc,
  \phi_1(P_n), \phi_2(P_n), \phi_3(P_n),
\]
and we call it the
\emph{matrix of conditions} imposed by $P_1, \dotsc, P_n$.
\end{definition}

\begin{definition}
Given a matrix $M$ of type $m \times 10$, we denote by $\Lambda(M) \subset \p^9$ the projective linear system of cubics $[a_0 x^3 + \dotsb + a_9 z^3]$ such that $M \, \prescript{t}{}{\left( a_0 \,  \cdots \,  a_9 \right)} = 0$.
\end{definition}

The description of~$\Phi(P)$ can be shortened as follows: consider the vectors $\de_x \mathcal{B}$,
$\de_y \mathcal{B}$, and~$\de_z \mathcal{B}$;
if $P=(A: B: C)$, we get:
\begin{equation}
\label{eq:vector_conditions}
  \begin{aligned}
    \phi_1(P) &= A\cdot \de_y \mathcal{B}(P) - B\cdot \de_x \mathcal{B}(P) \,, \\
    \phi_2(P) &= A\cdot \de_z \mathcal{B}(P) - C\cdot \de_x \mathcal{B}(P) \,, \\
    \phi_3(P) &= B\cdot \de_z \mathcal{B}(P) - C\cdot \de_y \mathcal{B}(P) \,.
  \end{aligned}
\end{equation}
\begin{rmk}
\label{remark:rank_2}
By analyzing the entries of \Cref{eq:matrix_conditions_rows}, it is not difficult to check that the rank of~$\Phi(P)$ is never $\leq 1$. \Cref{eq:vector_conditions} gives the relation
\begin{equation}
\label{eq:syzygy}
  C \, \phi_1(P) - B \, \phi_2(P) + A \, \phi_3(P) = 0 \,.
\end{equation}
Therefore, the vectors~$\phi_1(P)$, $\phi_2(P)$, and~$\phi_3(P)$ are linearly dependent and the matrix~$\Phi(P)$ has rank~$2$.
As a consequence, if $P_1, \dots, P_n$ are $n$ points of the plane, we have:
\[
  \rk \,\Phi(P_1, \dots, P_n) \leq \min \left\{2n, 10 \right\} \,.
\]
\end{rmk}

\subsection{Possible ranks of the matrix of conditions}

In what follows, we want to study the possible values of the rank of the matrix
$\Phi(P_1, \dots, P_n)$ for several configurations of points $P_1, \dots, P_n$
(and several values of~$n$).
In particular, we study the ideal~$J_k$ of order $k$ minors of the
involved matrices of conditions and we deduce some bounds about the rank from the primary
decomposition of~$J_k$.
Most of these computations are done with the aid of a computer algebra system.
Nevertheless, in many cases, the result cannot be reached just by brute force,
but it is necessary to preprocess the ideal~$J_k$.
In particular, it is often convenient to first saturate the ideal~$J_k$ with respect to
the condition that some points are distinct or that three of them are not aligned
(when this is the case).
Another important simplification that we adopt sometimes, makes use
of the action of~$\SO_3(\C)$: thanks to it, we can assume that one of
the point is either $(1: 0: 0)$ or $(1: \iii: 0)$; see \Cref{proposition:two_orbits}.

We start with the following lemma, which is extremely useful
to speed up the computations.

\begin{lemma}
\label{lemma:minors}
Let $l_1 < \cdots <l_n$ be $n$ indices (where $3 \leq n \leq 10$) and let $P = (A: B: C)$ be a point of the plane.
Construct three $1 \times n$ matrices $w_1$, $w_2$, $w_3$ by extracting the entries of position $l_1, \dotsc, l_n$ from~$\phi_1(P)$, $\phi_2(P)$, and~$\phi_3(P)$, respectively. If $L$ is a $(n-2) \times n$ matrix, set:
\[
  L_1 := \left( \begin{array}{c} w_1 \\ w_2 \\ L \end{array} \right), \quad
  L_2 := \left( \begin{array}{c} w_1 \\ w_3 \\ L \end{array} \right), \quad
  L_3 := \left( \begin{array}{c} w_2 \\ w_3 \\ L \end{array} \right)
\]
Then
\[
  B \det(L_1) = A \det(L_2), \quad
  C \det(L_1) = A \det(L_3), \quad
  C \det(L_2) = B \det(L_3),
\]
therefore $(A: B: C) = \bigl( \det(L_1): \det(L_2): \det(L_3) \bigr)$.
\end{lemma}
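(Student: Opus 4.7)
The key ingredient is the syzygy $C\phi_1(P)-B\phi_2(P)+A\phi_3(P)=0$ recorded in \Cref{remark:rank_2}. Since this identity holds entrywise in the $10$ monomial coordinates, restricting to the columns indexed by $l_1<\cdots<l_n$ yields the same linear dependence $Cw_1-Bw_2+Aw_3=0$ among the three short row vectors. My plan is to exploit this dependence, combined with multilinearity of the determinant and elementary row operations, to rewrite each $\det(L_i)$ as a scalar multiple of another.

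As a representative case, I would establish $B\det(L_1)=A\det(L_2)$ as follows. Scaling the second row of $L_2$ by $A$ produces a matrix of determinant $A\det(L_2)$ whose second row equals $Aw_3$; by the syzygy, this row is $Bw_2-Cw_1$. Adding $C$ times the first row ($w_1$) to the second row does not alter the determinant and replaces the second row by $Bw_2$; pulling the scalar $B$ out of the second row identifies the result with $B\det(L_1)$. The remaining identities $C\det(L_1)=A\det(L_3)$ and $C\det(L_2)=B\det(L_3)$ follow by the same template, where one of them additionally requires a single row swap (and the accompanying sign change) to bring the rearranged rows into the canonical order of the target matrix.

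Once the three bilinear relations have been verified, the projective equality $(A:B:C)=\bigl(\det(L_1):\det(L_2):\det(L_3)\bigr)$ is automatic, since these relations are exactly the $2\times 2$ minors of the matrix
\[
  \begin{pmatrix}
    A & B & C \\
    \det(L_1) & \det(L_2) & \det(L_3)
  \end{pmatrix},
\]
so its rank is at most one and the two triples represent the same projective point whenever they are not simultaneously zero. I do not anticipate any genuine obstacle: the whole proof is a formal manipulation of a single linear syzygy through multilinearity of the determinant, with the only care needed being the sign bookkeeping arising from the row swap.
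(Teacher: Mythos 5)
Your proposal is correct and follows essentially the same route as the paper: the paper's proof consists precisely of invoking the restricted syzygy $C w_1 - B w_2 + A w_3 = 0$ (a consequence of \Cref{eq:syzygy}) and declaring that the determinant identities follow, which is exactly what you carry out in detail via multilinearity and row operations. Your explicit sign bookkeeping for the case $C\det(L_1)=A\det(L_3)$ and the caveat that the projective equality requires the three determinants not to vanish simultaneously are both accurate refinements of the paper's terse argument.
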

\begin{proof}
The statement easily follows from the equality $C w_1 - B w_2 + A w_3 = 0$, which is a direct consequence of \Cref{eq:syzygy}.
\end{proof}

Next, we point out a property of the lines that are tangent to the isotropic conic~$\iso$.
First of all, we fix some notation that is used throughout the paper.

\begin{definition}
\label{definition:sigma}
For $P_1 = (A_1: B_1: C_1)$ and $P_2 = (A_2: B_2: C_2)$, we set
\begin{equation}
\label{formula:sigma}
\begin{aligned}
  \sigma(P_1, P_2) &= \scl{P_1}{P_1} \scl{P_2}{P_2} - \scl{P_1}{P_2}^2 \\
   &= s_{11}s_{22}-s_{12}^2
\end{aligned}
\end{equation}
This is a bihomogeneous polynomial of bidegree~$(2,2)$ on $\p^2 \times \p^2$.
\end{definition}

\begin{rmk}
\label{remark:sigma_discr}
The form~$\sigma$ is the discriminant of the intersection between the line~$P_1 \vee P_2$ and~$\iso$.
Indeed, if $u_1 P_1 + u_2P_2$ is a generic point on the line $P_1 \vee P_2$, the relation
\[
  \scl{u_1 P_1 + u_2 P_2}{u_1 P_1 + u_2 P_2} =0.
\]
gives the intersection $(P_1 \vee P_2) \cap \iso$.
The discriminant of the latter (as a polynomial in $u_1$ and $u_2$) is $4 \bigl( \scl{P_1}{P_2}^2 - \scl{P_1}{P_1} \scl{P_2}{P_2} \bigr)$, and this proves the claim.
\end{rmk}

\begin{prop}
\label{proposition:sigma_tangency}
Let $P_1$, $P_2$ be two distinct points in the plane and let $r=P_1 \vee P_2$.
Then the following are equivalent:
\begin{enumerate}
  \item $\sigma(P_1, P_2) = 0$;
  \item $\sigma(Q_1, Q_2) = 0$ for all pairs of distinct points $Q_1, Q_2 \in r$;
  \item the line~$r$ is tangent to~$\iso$ at some point;
  \item there exists a point $T \in r$ such that $T \in \iso$ and $\scl{T}{Q} = 0$ for all $Q \in r$, $Q \neq T$.
\end{enumerate}
\end{prop}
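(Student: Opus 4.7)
The plan is to establish the equivalences by first observing that the value of $\sigma$ on a pair of points depends only on the line they span (up to a nonzero factor), and then invoking the discriminant interpretation of \Cref{remark:sigma_discr} together with the standard correspondence between tangency and polarity.

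First I would prove the implication (1) $\Rightarrow$ (2), which contains (2) $\Leftrightarrow$ (1). Given any two pairs $(P_1,P_2)$ and $(Q_1,Q_2)$ of distinct points on $r$, write $Q_i = M_{i1} P_1 + M_{i2} P_2$ for $i=1,2$, where the $2\times 2$ matrix $M=(M_{ij})$ is invertible because the $Q_i$ are linearly independent. Setting
\[
  S := \begin{pmatrix} s_{11} & s_{12} \\ s_{12} & s_{22} \end{pmatrix},
\]
bilinearity of $\scl{\cdot}{\cdot}$ gives that the analogous matrix for $(Q_1,Q_2)$ equals $M S \, \prescript{t}{}{M}$. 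Taking determinants yields $\sigma(Q_1,Q_2) = (\det M)^2 \, \sigma(P_1,P_2)$, so $\sigma$ vanishes on one pair of distinct points on $r$ if and only if it vanishes on every such pair. This proves (1) $\Leftrightarrow$ (2).

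Next I would show (1) $\Leftrightarrow$ (3). By \Cref{remark:sigma_discr}, the quantity $\sigma(P_1,P_2)$ is, up to a nonzero scalar, the discriminant of the binary quadratic form in $(u_1,u_2)$ that cuts out $r\cap\iso$ on $r$ parametrized as $u_1 P_1 + u_2 P_2$. A binary quadratic has vanishing discriminant precisely when it has a double root, which translates to $r$ meeting $\iso$ in a single point with multiplicity two, that is, $r$ being tangent to $\iso$.

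Finally, for (3) $\Leftrightarrow$ (4), I would argue via polars. Assume (3) and let $T \in r \cap \iso$ be the point of tangency. The polar line of $T$ with respect to $\iso$ passes through $T$ (since $T \in \iso$) and meets $\iso$ exactly at $T$ with multiplicity two, so it coincides with the tangent line to $\iso$ at $T$, which is $r$. Hence every $Q \in r$ satisfies $\scl{T}{Q}=0$, giving (4). Conversely, if $T \in r \cap \iso$ is orthogonal to every other point of $r$, then $r$ is contained in the polar line $\ell_T$ of $T$; as $T \in \iso$, the polar $\ell_T$ is the tangent line to $\iso$ at $T$, and so $r=\ell_T$ is tangent to $\iso$.

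The argument is elementary; the only step requiring a small computation is the transformation law $\sigma(Q_1,Q_2) = (\det M)^2 \sigma(P_1,P_2)$, which is the main conceptual content and also the quickest way to reduce everything to the single-pair statement of \Cref{remark:sigma_discr}.
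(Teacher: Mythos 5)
Your proof is correct, and it overlaps with the paper's on the central point: both establish the equivalence of (1) and (3) by citing the discriminant interpretation of \Cref{remark:sigma_discr}. Where you diverge is in the two ``easy'' equivalences. For (1) $\Leftrightarrow$ (2), the paper simply observes that tangency is a property of the line and declares the first three items equivalent, whereas you prove the transformation law $\sigma(Q_1,Q_2) = (\det M)^2\,\sigma(P_1,P_2)$ directly from the Gram-matrix identity $M S \, \prescript{t}{}{M}$; this is a cleaner and more self-contained justification, and it shows that $\sigma$ is genuinely an invariant of the line up to square factors, which is slightly more information than the paper records. For (3) $\Leftrightarrow$ (4), the paper argues algebraically: since $T \in \iso$ gives $\scl{T}{T}=0$, item (2) yields $\sigma(T,Q) = -\scl{T}{Q}^2 = 0$, hence $\scl{T}{Q}=0$; you instead identify the locus $\scl{T}{\cdot}=0$ as the polar line of $T$ and use that the polar of a point of $\iso$ is the tangent line there. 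Both are valid; the paper's route stays entirely inside the $\sigma$-formalism it has just set up, while yours imports the standard polarity dictionary, which makes the geometric content of item (4) more transparent. No gaps.
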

\begin{proof}
By \Cref{remark:sigma_discr}, we have that $\sigma(P_1, P_2) = 0$ if and only if $r$ is tangent to~$\iso$ in a point~$T$; this shows that the first three items are equivalent.

Now, if $\sigma(P_1, P_2) = 0$, then $r$ is tangent to~$\iso$ at a point~$T$, hence $\scl{T}{T} = 0$.
Moreover, by the second item $\sigma(T, Q) = 0$ for all $Q \in r$ with $Q \neq T$, and by the definition of $\sigma$ the claim follows.
The converse is immediate.
\end{proof}

\begin{prop}
\label{proposition:three_distinct_ranks}
Let $P_1, P_2, P_4$ be three distinct points of the plane. Then:
\begin{itemize}
  \item $5 \leq \rk \,\Phi(P_1, P_2, P_4) \leq 6$;
  \item
  $\rk \, \Phi(P_1, P_2, P_4) = 5$ if and only if $P_1, P_2, P_4$
  are aligned and the line joining them is tangent to~$\iso$
  in one of the three points.
\end{itemize}
\end{prop}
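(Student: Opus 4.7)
The plan. The upper bound $\rk \Phi(P_1, P_2, P_4) \le 6$ is immediate from \Cref{remark:rank_2}, since each of the three $3\times 10$ blocks $\Phi(P_i)$ has rank exactly $2$, so the stacked matrix has rank at most $2+2+2=6$. For the remaining assertions, I exploit the fact that the preceding proposition on the $\SO_3(\C)$-action shows $\rk \Phi(P_1, P_2, P_4)$ depends only on the $\SO_3(\C)$-orbit of the triple: the substitution $f \mapsto M \cdot f$ is an invertible change of coordinates on $\p^9$ that transports the rows of $\Phi(P_i)$ to combinations of the rows of $\Phi(M P_i)$. By \Cref{proposition:two_orbits}, I can therefore reduce to the two normalized cases $P_1 = (1:0:0)$ and $P_1 = (1:\iii:0)$.

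In each case, write $P_2 = (A_2:B_2:C_2)$, $P_4 = (A_4:B_4:C_4)$, view $\Phi(P_1, P_2, P_4)$ as a $9 \times 10$ matrix over $R = \C[A_2, B_2, C_2, A_4, B_4, C_4]$, and denote by $J_k \subset R$ the ideal of its $k \times k$ minors. Let $I_{\mathrm{dist}} \subset R$ be the ideal defining the locus where two of $P_1, P_2, P_4$ coincide. The lower bound $\rk \Phi \ge 5$ amounts to verifying that the saturation $(J_5 : I_{\mathrm{dist}}^\infty)$ is the unit ideal, meaning that no configuration of three distinct points simultaneously kills all $5 \times 5$ minors; this is a symbolic check. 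The characterization of $\rk \Phi = 5$ amounts to computing the primary decomposition of $(J_6 : I_{\mathrm{dist}}^\infty)$ and matching its associated primes with the geometric conditions produced by \Cref{proposition:sigma_tangency}.

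By that proposition, the locus ``$P_1, P_2, P_4$ are collinear on a line tangent to $\iso$ at one of them, say $P_j$'' is cut out by the determinantal vanishing $\det(P_1 \mid P_2 \mid P_4)=0$ together with $\scl{P_j}{P_j}=0$ and $\scl{P_j}{P_i}=0$ for the remaining two points $P_i$, equivalently $\sigma(P_j, P_i)=0$. The expected outcome is that $(J_6 : I_{\mathrm{dist}}^\infty)$ has exactly three minimal primes, one per choice $j \in \{1,2,4\}$, and this gives both halves of the equivalence: the inclusion $\supset$ by substitution into $\Phi$, and the inclusion $\subset$ from the primary decomposition itself. The main obstacle is precisely this last identification. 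The primary decomposition of a minor ideal of a $9 \times 10$ matrix in six variables is computationally heavy, and the $\SO_3(\C)$-normalization of $P_1$ is essential to keep it tractable. The two normalizations must also be treated separately: when $P_1 \in \iso$ the condition ``tangency at $P_1$'' is partly pre-imposed by the choice $P_1 = (1:\iii:0)$, so the corresponding component takes a different shape than it does when $P_1 = (1:0:0)$, and one must check that the remaining two symmetric tangency components (at $P_2$ and at $P_4$) still appear with the correct equations.
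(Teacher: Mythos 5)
Your proposal is correct and follows essentially the same route as the paper: the upper bound comes from \Cref{remark:rank_2}, and both the lower bound and the rank-$5$ characterization are reduced to a symbolic computation on the saturated minor ideals of $\Phi(P_1,P_2,P_4)$, which is exactly what the paper delegates to Notebook \nb{03}{F1}. The $\SO_3(\C)$-normalization of $P_1$ that you add is the same preprocessing trick the paper itself advertises for these computations, so it is an expected optimization rather than a genuinely different argument.
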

\begin{proof}
The rank of $\Phi(P_1, P_2, P_4)$ cannot be larger than~$6$ by \Cref{remark:rank_2}.
\nb{03}{F1} gives that $\Phi(P_1, P_2, P_4)$ cannot have rank $4$ (since its minors of order~$5$ never vanish all at the same time) and it also proves the second item.
\end{proof}

\begin{prop}
\label{proposition:three_aligned_plus_one}
Let $P_1, P_2, P_3, P_4$ be four distinct points of the plane such that
$P_1, P_2, P_3$ are aligned, let $r = P_1 \vee P_2 \vee P_3$ and assume $P_4 \not \in r$.

If $\rk \,\Phi(P_1, P_2, P_3, P_4) \leq 7$ then $r$ is tangent to~$\iso$ in one of the three points~$P_1$, $P_2$, and~$P_3$.
\end{prop}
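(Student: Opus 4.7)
The setup is as follows: since $P_4 \notin r$, the three points $P_1, P_2, P_4$ are not collinear, so by \Cref{proposition:three_distinct_ranks} we have $\rk \Phi(P_1, P_2, P_4) = 6$, which together with \Cref{remark:rank_2} yields
\[
  6 \le \rk \Phi(P_1, P_2, P_3, P_4) \le 8 \,.
\]
The hypothesis $\rk \Phi(P_1, P_2, P_3, P_4) \le 7$ is therefore equivalent to the simultaneous vanishing of all $8 \times 8$ minors of the $12 \times 10$ matrix $\Phi(P_1, P_2, P_3, P_4)$; equivalently, using the syzygy of \Cref{eq:syzygy} to discard one row per block, one can work with a reduced $8 \times 10$ matrix whose rank is required to be at most $7$.

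The plan is the computer-algebra strategy already used in the proof of \Cref{proposition:three_distinct_ranks} (cf.\ \nb{03}{F1}). First I use \Cref{proposition:two_orbits} to normalize $P_1$, via the $\SO_3(\C)$-action, to one of the two orbit representatives $(1:0:0)$ or $(1:\iii:0)$. Then I parametrize $P_2$ and $P_3$ along the line $r$ through $P_1$ (for instance $P_2 = P_1 + s\,Q$, $P_3 = P_1 + t\,Q$ for an auxiliary point $Q$), while $P_4 = (A_4 : B_4 : C_4)$ is left arbitrary. In each normal-form case, I form the ideal $J$ generated by the $8 \times 8$ minors of the reduced matrix, saturate $J$ by the non-degeneracy conditions ($s \ne t$, $s, t \ne 0$, $P_4 \notin r$, pairwise distinctness of the $P_i$), and verify by symbolic computation that the zero locus of the saturated ideal is contained in the union of the three loci cut out by
\[
  \bigl\langle \scl{P_1}{P_1},\ \scl{P_1}{P_2} \bigr\rangle, \quad
  \bigl\langle \scl{P_2}{P_2},\ \scl{P_2}{P_3} \bigr\rangle, \quad
  \bigl\langle \scl{P_3}{P_3},\ \scl{P_3}{P_1} \bigr\rangle \,.
\]
By \Cref{proposition:sigma_tangency}(4), these three ideals encode exactly the conditions that $r$ be tangent to $\iso$ at $P_1$, $P_2$, and $P_3$ respectively; hence at least one of them must vanish on the given configuration, which is the statement.

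The main obstacle is purely computational: even after the reduction to an $8 \times 10$ matrix, the ideal of $8 \times 8$ minors is still large, and the normalization of $P_1$ to a single monomial representative, with only two cases to handle, is what brings the computation within reach. The case $P_1 \in \iso$ is expected to be the more delicate one, since there $\scl{P_1}{P_1}$ vanishes identically and the tangency condition at $P_1$ partly trivializes, so extra care is required to isolate the genuine geometric components of $V(J)$ from spurious ones such as $P_1 = P_2$, $P_1 = P_3$, or $P_4 \in r$, which may appear in the primary decomposition before saturation.
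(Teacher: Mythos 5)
Your proposal is correct and follows essentially the same route as the paper, whose proof simply delegates to the symbolic verification in \nb{03}{F2} that the vanishing of all order-$8$ minors of $\Phi(P_1,P_2,P_3,P_4)$ is equivalent to the tangency condition. Your additional details --- the a priori bounds $6 \le \rk \le 8$ via \Cref{proposition:three_distinct_ranks}, the row reduction via \Cref{eq:syzygy}, the $\SO_3(\C)$ normalization of $P_1$, and the identification of the tangency loci with the ideals $\bigl(\scl{P_i}{P_i}, \scl{P_i}{P_j}\bigr)$ through \Cref{proposition:sigma_tangency} --- are all consistent with the computational strategy the paper uses throughout.
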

\begin{proof}
\nb{03}{F2} shows that the condition imposed by the vanishing of all order $8$ minors of the matrix~$\Phi(P_1, P_2, P_3, P_4)$ is equivalent to the statement.
\end{proof}

\subsection{The conditions \texorpdfstring{$\delta_1$}{delta1}, \texorpdfstring{$\bar{\delta}_1$}{deltabar1}, and \texorpdfstring{$\delta_2$}{delta2}}

We now define three expressions depending on the homogenous coordinates of a triple or on a $5$-tuple of points in the plane.
Such expressions will be crucial in describing what happens when we have aligned eigenpoints.

\begin{definition}
\label{definition:delta1}
We define a multihomogeneous polynomial of multidegree~$(2,1,1)$ on $\p^2 \times \p^2 \times \p^2$:
if $P_i = (A_i: B_i: C_i)$ for $i \in \{1, 2, 4\}$, we set
\begin{align*}
  \delta_1(P_1, P_2, P_4) &:=
  \scl{P_1}{P_1} \scl{P_2}{P_4} - \scl{P_1}{P_2}\scl{P_1}{P_4} =
  \scl{P_1\times P_2}{P_1 \times P_4} \\
  &\phantom{:}= s_{11} s_{24}-s_{12}s_{14} \,,
\end{align*}
where $\times$ denotes the cross product, see \Cref{eq:cross_product}.
\end{definition}

\begin{rmk}
\label{remark:delta1_meaning}
Geometrically, the condition $\delta_1(P_1, P_2, P_4) = 0$ corresponds to the orthogonality of the lines~$P_1 \vee P_2$ and~$P_1 \vee P_4$, if the three points are not collinear, while
$\delta_1(P_1, P_2, P_4) = 0$ implies that $\sigma (P_1,P_2)=0$ if they are collinear.
\end{rmk}

\begin{definition}
\label{definition:delta1b}
Let $P_1$, $P_2$ and~$P_3$ be distinct aligned points in the plane.
We define the polynomial
\begin{align*}
  \overline{\delta}_1(P_1, P_2, P_3) &:=
  \scl{P_1}{P_1} \scl{P_2}{P_3} + \scl{P_1}{P_2}\scl{P_1}{P_3} \\
  &\phantom{:}= s_{11} s_{24}-s_{12}s_{14}\,.
\end{align*}
\end{definition}

\begin{definition}
\label{definition:Vconf}
Let $P_1, P_2, P_3, P_4, P_5$ be five distinct points of the plane
such that $P_1, P_2, P_3$ and $P_1, P_4, P_5$ are aligned, and
$P_1,P_2,P_4$ not aligned.
We call such a configuration a \emph{$V$- configuration}.
\end{definition}

\begin{definition}
Let $P_1, \dots, P_5$ be a $V$- configuration.
We define the polynomial
\begin{align*}
  \delta_2(P_1, P_2, P_3, P_4, P_5) &:=
  \scl{P_1}{P_2} \scl{P_1}{P_3} \scl{P_4}{P_5} -
  \scl{P_1}{P_4} \scl{P_1}{P_5} \scl{P_2}{P_3} \\
  &\phantom{:}= s_{12}s_{13}s_{45}-s_{14}s_{15} s_{23} \,.
\end{align*}
\end{definition}

The next results give a geometric description of the zero loci of the expressions just defined.
\begin{lemma}
\label{lemma:characteristics_d1_d2}
It holds
\begin{align}
\label{lemma_delta_case1}
  \delta_1(P_1, P_2, P_4) = 0 \mbox{ iff } &\scl{P_4}{s_{11}P_2-s_{12}P_1} = 0\\
  \mbox{iff } &\scl{P_2}{s_{11}P_4-s_{14}P_1} = 0 \,. \nonumber
\end{align}
\begin{align}
\label{lemma_delta_case2}
  \overline{\delta}_1(P_1, P_2, P_3) = 0 \mbox{ iff } &
  P_1 \mbox{ is on~$\iso$ and } P_1 \vee P_2 \vee P_3 \mbox{ is tangent to~$\iso$ in $P_1$, or} \\
  & P_3 = (s_{12}^2+s_{11}s_{22}) \, P_1 - 2s_{11}s_{12} \, P_2 \mbox{ or} \nonumber \\
  & P_2 = (s_{13}^2+s_{11}s_{33}) \, P_1 - 2s_{11}s_{13} \, P_3 \nonumber
\end{align}
\end{lemma}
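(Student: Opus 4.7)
The first equivalence is a routine bilinear identity that I would dispatch immediately: expanding gives $\scl{P_4}{s_{11}P_2 - s_{12}P_1} = s_{11}s_{24} - s_{12}s_{14} = \delta_1(P_1,P_2,P_4)$, and symmetrically $\scl{P_2}{s_{11}P_4 - s_{14}P_1} = s_{11}s_{24} - s_{14}s_{12}$, so all three expressions coincide as bihomogeneous polynomials.

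For the characterization of $\overline{\delta}_1 = 0$, the plan is to turn the condition into a single projective linear equation by exploiting the alignment of $P_1, P_2, P_3$. Since $P_1 \ne P_2$, I would parametrize $P_3 = \lambda P_1 + \mu P_2$ with $\mu \ne 0$ (enforced by $P_3 \ne P_1$). Substituting $s_{13} = \lambda s_{11} + \mu s_{12}$ and $s_{23} = \lambda s_{12} + \mu s_{22}$ into $\overline{\delta}_1 = s_{11}s_{23} + s_{12}s_{13}$ and collecting terms yields
\[
  \overline{\delta}_1 = 2\lambda\, s_{11}s_{12} \, + \, \mu\,(s_{11}s_{22} + s_{12}^2).
\]
The three alternatives in the statement will then correspond to the distinct regimes in which this linear equation must be solved for $(\lambda:\mu)\in\p^1$.

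The case analysis is as follows. If $s_{11} = 0$, i.e.\ $P_1 \in \iso$, the equation collapses to $s_{12}s_{13} = 0$, and (up to swapping $P_2$ and $P_3$) I would assume $s_{12} = 0$; this places $P_2$ on the polar of $P_1$, which, by \Cref{proposition:sigma_tangency} applied at the point $P_1 \in \iso$, is precisely the tangent to $\iso$ at $P_1$, giving the first alternative. If $s_{11}s_{12} \neq 0$, the coefficient $2 s_{11}s_{12}$ is nonzero, so the equation has the unique projective solution $(\lambda:\mu) = (s_{11}s_{22} + s_{12}^2 : -2 s_{11}s_{12})$, which is exactly the second alternative. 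If $s_{11} \neq 0$ but $s_{12} = 0$, the equation reduces to $\mu s_{11} s_{22} = 0$, forcing $s_{22} = 0$ since $\mu \neq 0$; the first formula then degenerates to $0$, and I would instead exploit the symmetry of $\overline{\delta}_1$ under $P_2 \leftrightarrow P_3$: running the same argument with $P_2 = \lambda' P_1 + \mu' P_3$ (noting that $s_{13} = \lambda s_{11}$ and $s_{33} = \lambda^2 s_{11}$ are both nonzero) produces the third alternative.

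The converse direction in each case is a short substitution check. The main obstacle is not really a computation but the bookkeeping: the two explicit formulas are exchanged by the $P_2 \leftrightarrow P_3$ symmetry and each of them degenerates precisely in the regime where the other is well-posed; the fully degenerate case $P_1 \in \iso$, where both formulas collapse to $0$, is what makes the separate first alternative unavoidable.
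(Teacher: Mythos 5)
Your proof is correct, and it takes a genuinely different route from the paper, whose entire proof of this lemma is a one-line delegation to a symbolic verification in a SageMath notebook. The first equivalence is, as you say, just bilinearity: all three expressions are literally the same polynomial $s_{11}s_{24}-s_{12}s_{14}$. For the second part, your normal form $\overline{\delta}_1 = 2\lambda\, s_{11}s_{12}+\mu\,(s_{11}s_{22}+s_{12}^2)$ for $P_3=\lambda P_1+\mu P_2$ is exactly the right reduction, and your three regimes ($s_{11}=0$; $s_{11}s_{12}\neq 0$; $s_{11}\neq 0$ and $s_{12}=0$) are exhaustive and match the three alternatives of the statement in order. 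The one step worth writing out in the third regime is that the equation forces $s_{22}=0$, whence $s_{13}=\lambda s_{11}$, $s_{33}=\lambda^2 s_{11}$ and
\[
 \bigl(s_{13}^2+s_{11}s_{33}\bigr)\,P_1-2s_{11}s_{13}\,P_3=-2\lambda\mu\, s_{11}^2\,P_2 \,,
\]
which is a nonzero multiple of $P_2$ because $\lambda\mu\neq 0$ by distinctness of the three points; this confirms the third alternative as a well-posed projective identity, and the backward directions are the same substitutions read in reverse. What your approach buys over the paper's is transparency: one sees exactly why the tangency alternative is unavoidable (both closed formulas degenerate to $0$ precisely when $s_{11}=s_{12}=0$) and why the two formulas are exchanged by the $P_2\leftrightarrow P_3$ symmetry of $\overline{\delta}_1$. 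What the notebook buys is freedom from this case bookkeeping, since the ideal-theoretic computation handles the saturations with respect to coincidences of points automatically.
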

\begin{proof}
 The proof is a symbolic verification; one can find it in \nb{03}{F3}.
\end{proof}

In general, if we fix $P_1,P_2,P_4,P_5$, the condition
$\delta_2=0$ uniquely determines $P_3$. However, there are some exceptions, which are listed in the following proposition.

\begin{prop}
\label{proposition:definitionP3}
For five points $P_1, \dots, P_5$ in a $V$- configuration, it holds that
$\delta_2(P_1, \dotsc, P_5) = 0$ if and only if (up to a permutation of $P_2, \dots, P_5$) at least one of the following conditions
is satisfied:
\begin{enumerate}
  \item $s_{12} = 0$ and $s_{14} = 0$;
  \label{defP3_1}
  \item $s_{12} = 0$ and $s_{22} = 0$;
  \label{defP3_2}
  \item $\sigma(P_1, P_2) = 0$ and $\sigma(P_1, P_4) = 0$;
  \label{defP3_3}
  \item $P_3 = (s_{14}s_{15}s_{22}-s_{12}^2s_{45})P_1  +s_{12}(s_{11}s_{45}-s_{14}s_{15})P_2$.
  \label{defP3_4}
\end{enumerate}
\end{prop}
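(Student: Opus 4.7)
Since $P_1, P_2, P_3$ and $P_1, P_4, P_5$ are each collinear triples of distinct points, I parameterize $P_3 = \alpha P_1 + \beta P_2$ and $P_5 = \gamma P_1 + \delta P_4$ with $\beta, \delta \neq 0$. Expanding the bilinear forms
\[
  s_{13} = \alpha s_{11} + \beta s_{12}, \quad
  s_{23} = \alpha s_{12} + \beta s_{22}, \quad
  s_{15} = \gamma s_{11} + \delta s_{14}, \quad
  s_{45} = \gamma s_{14} + \delta s_{44}
\]
and substituting into the definition of $\delta_2$ collapses it, after a short calculation, to
\[
  \delta_2 = \alpha \, U - \beta \, V,
  \quad \text{where} \quad
  U := s_{12}\bigl(s_{11}s_{45} - s_{14}s_{15}\bigr),
  \quad V := s_{14}s_{15}s_{22} - s_{12}^2 s_{45}.
\]
When $(U,V) \neq (0,0)$, the vanishing of $\delta_2$ pins down $(\alpha:\beta) = (V:U)$, whence $P_3 = V P_1 + U P_2$: this is item~(4). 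Conversely, that formula substituted back gives $\delta_2 = VU - UV = 0$.

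The real content lies in the degenerate case $U = V = 0$, where $\delta_2$ vanishes regardless of the position of $P_3$ on $P_1 \vee P_2$, and we must extract items~(1)--(3) up to permutation. The pivotal identity, obtained by direct substitution of the parameterization of $P_5$, is
\[
  s_{11}s_{45} - s_{14}s_{15} = \delta \cdot \sigma(P_1, P_4),
\]
which converts the vanishing of $U$ (when $s_{12} \neq 0$) into the geometric condition $\sigma(P_1,P_4) = 0$. I then split on whether $s_{12}$ vanishes. If $s_{12} = 0$, then $V = s_{14}s_{15}s_{22}$, so one of the three factors vanishes; this yields item~(1) when $s_{14} = 0$, reduces to item~(1) after swapping $P_4 \leftrightarrow P_5$ when $s_{15} = 0$, and yields item~(2) when only $s_{22} = 0$. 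If $s_{12} \neq 0$, the key identity forces $\sigma(P_1,P_4) = 0$; substituting $s_{14}s_{15} = s_{11}s_{45}$ into $V$ rewrites it as $V = s_{45}\,\sigma(P_1,P_2)$, so either $\sigma(P_1,P_2) = 0$ (giving item~(3) in conjunction with $\sigma(P_1,P_4)=0$), or $s_{45} = 0$. The latter sub-branch forces $s_{14}s_{15} = 0$; after possibly swapping $P_4 \leftrightarrow P_5$---which preserves $\sigma(P_1,P_4) = 0$ by \Cref{proposition:sigma_tangency}---I may assume $s_{14} = 0$, whence $s_{45} = \delta s_{44}$ and $\delta \neq 0$ give $s_{44} = 0$; swapping the two legs of the~$V$ via $(P_2\,P_4)(P_3\,P_5)$ then turns $s_{14} = s_{44} = 0$ into item~(2). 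The converse directions---that each of items (1)--(3) forces $U = V = 0$---are one-line substitutions.

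The main obstacle is not any single algebraic step but the book-keeping of permutations at the end of the case analysis, where one must combine a swap within a single leg of the $V$-configuration with a swap of the two legs in order to recognize the terminal degenerate configuration as a permutation of item~(2). The identity relating $s_{11}s_{45} - s_{14}s_{15}$ to $\delta\cdot\sigma(P_1,P_4)$ is the structural input that makes the case analysis close, since it is what converts the purely algebraic equation $U = 0$ into the geometric tangency condition that appears in item~(3) and links the three degenerate items together.
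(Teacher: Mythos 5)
Your proof is correct, and it takes a genuinely different route from the paper's. The paper's own argument is computational: it forms the ideal generated by $\delta_2$ together with the two coefficients appearing in item~(4), computes a primary decomposition with a computer algebra system (obtaining the components $(s_{12},s_{14})$, $(s_{12},s_{22})$ and a third ideal~$J$), and then verifies symbolically, after saturating with respect to the nondegeneracy conditions, that $J$ coincides with $\bigl(\sigma(P_1,P_2),\sigma(P_1,P_4)\bigr)$; all of this is delegated to a notebook. You instead do everything by hand. The identity $\delta_2=\alpha U-\beta V$ with $\alpha,\beta\neq 0$ (forced by distinctness of $P_3$ from $P_1$ and $P_2$) cleanly isolates item~(4) as the nondegenerate case, and the identity $s_{11}s_{45}-s_{14}s_{15}=\delta\,\sigma(P_1,P_4)$ -- together with its consequence $V=s_{45}\,\sigma(P_1,P_2)$ on the branch $s_{12}\neq 0$ -- is precisely the structural fact that the paper's saturation computation encodes implicitly; your case split then recovers the three degenerate components and explains \emph{why} the tangency conditions of item~(3) appear. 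Your permutation bookkeeping is also sound: the swaps you invoke ($P_4\leftrightarrow P_5$ and the leg swap $(P_2\,P_4)(P_3\,P_5)$) preserve the $V$-configuration and change $\delta_2$ at most by a sign, so the ``up to permutation'' clause is consistent in both directions (this last observation is the one thing you leave implicit in the converse, and it is immediate). What your approach buys is a self-contained, human-verifiable proof independent of software; what the paper's buys is uniformity with the computational pipeline used throughout.
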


\begin{proof}
Condition~(\ref{defP3_4}), when defined (i.e., when the coefficients of~$P_1$ and~$P_2$ are not zero), easily comes from the definition of~$\delta_2$.
The point~$P_3$ is not defined by that formula if and only if
$s_{14}s_{15}s_{22}-s_{12}^2s_{45}=0$ and $s_{11}s_{12}s_{45}-s_{12}s_{14}s_{15}=0$.
Hence, we study the ideal generated by these two polynomials, together with the
polynomial $s_{12}s_{13}s_{45}-s_{14}s_{15} s_{23}$ which defines~$\delta_2$ in terms of the $s_{ij}$. The corresponding
ideal decomposes into several ideals which are, up to a permutation of
the indices: $(s_{12}, s_{14})$, $(s_{12}, s_{22})$ and
\[
  J = (s_{13}s_{22} - s_{12}s_{23}, s_{14}s_{15} - s_{11}s_{45}, s_{12}s_{13} -
  s_{11}s_{23}, s_{12}^2 - s_{11}s_{22}) \,.
\]
Then we give generic coordinates to the five
points and substitute them into the ideal $J$. It is possible to see that
$J$ and the ideal $\bigl(\sigma(P_1, P_2), \sigma(P_1, P_4)\bigr)$ are equal (up to
saturations w.r.t. the condition that all points are distinct and that $P_1, P_2, P_4$ are not aligned).
These computations are carried in \nb{03}{F4}.
\end{proof}
\begin{rmk}
In the first case of \Cref{proposition:definitionP3}, the corresponding cubic
is studied in \Cref{further_alignments}, configuration $(C_5)$.
In the second case, the conditions $s_{12}=0$ and $s_{22}=0$ imply $\sigma(P_1, P_2) = 0$ and $P_2\in \iso$, so the line~$P_1 \vee P_2 \vee P_3$ is tangent to~$\iso$ in~$P_2$
and this case was considered in~\Cref{proposition:three_distinct_ranks}; in particular,
the matrix $\Phi(P_1, P_2, P_3)$ has rank~$5$, so
$\Phi(P_1, \dots, P_5)$ has rank $\le 9$, regardless of
$P_4$ and~$P_5$.
The third case gives either that all the
points of the two lines of the $V$- configuration are eigenpoints (in case
$P_2 \not\in \iso$ or $P_4 \not\in \iso$) or the matrix $\Phi(P_1, \dots, P_5)$
has rank~$8$ (if $P_2, P_4 \in \iso$); see \Cref{rank_8} and \Cref{positive_dim}, respectively.
The last case is the generic one and expresses $P_3$ in terms of the remaining four points.
\end{rmk}
\begin{prop}
\label{proposition:d1d2}
Let $P_1, \dots, P_5$ be a $V$- configuration. Then
\[
  \rk \,\Phi(P_1, \dots, P_5) \leq 9
  \quad \mbox{if and only if} \quad
  \delta_1(P_1, P_2, P_4) \cdot \delta_2(P_1, \dots, P_5) = 0 \,.
\]
\end{prop}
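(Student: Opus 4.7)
The plan is to reduce the rank condition to a single determinantal equation and then match it symbolically with $\delta_1 \cdot \delta_2$. The matrix $\Phi := \Phi(P_1, \dotsc, P_5)$ is $15 \times 10$, and by \Cref{remark:rank_2}, for each $P_i$ the three rows $\phi_1(P_i), \phi_2(P_i), \phi_3(P_i)$ are linearly dependent via \Cref{eq:syzygy} and have rank $2$. Deleting one row per point (say, the one corresponding to a nonzero coordinate of $P_i$) yields a $10 \times 10$ submatrix $\tilde{\Phi}$ with $\rk \tilde{\Phi} = \rk \Phi$. Hence $\rk \Phi \leq 9$ is equivalent to $\det \tilde{\Phi} = 0$, and the task becomes to show that the vanishing locus of $\det \tilde{\Phi}$ on the $V$-configuration variety coincides set-theoretically (indeed, ideal-theoretically, modulo the exclusion loci) with the vanishing locus of $\delta_1(P_1, P_2, P_4) \cdot \delta_2(P_1, \dotsc, P_5)$.

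To make this computationally feasible, I would exploit \Cref{proposition:two_orbits} together with the $\SO_3(\C)$-invariance from \Cref{invariance} to fix $P_1$ to one of the two orbit representatives $(1:0:0)$ and $(1:\iii:0)$, handling each case separately. The $V$-configuration is then encoded by the parametrizations $P_3 = \lambda P_1 + \mu P_2$ and $P_5 = \alpha P_1 + \beta P_4$, so that the two collinearities are automatic and the coordinates of $P_2, P_4$ together with $(\lambda, \mu, \alpha, \beta)$ become free variables. In this setup, $\det \tilde{\Phi}$ becomes an explicit multihomogeneous polynomial amenable to symbolic factorization.

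The final step is to compute $\det \tilde{\Phi}$ and decompose the ideal it generates after saturating with respect to the assumptions built into a $V$-configuration: that the five points are distinct, and that $P_1, P_2, P_4$ are not collinear. One then verifies that the saturated vanishing ideal equals the principal ideal generated by $\delta_1 \cdot \delta_2$; this establishes both directions of the equivalence simultaneously. The main obstacle, familiar from \Cref{proposition:three_distinct_ranks} and \Cref{proposition:three_aligned_plus_one}, is disentangling the extraneous components of $V(\det \tilde{\Phi})$: factors coming from coincidences among points, from the $V$-configuration being degenerate in the sense of \Cref{proposition:definitionP3}, or from artifacts of the specific parametrization chosen. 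In line with the style of this section, the bookkeeping is best carried out in computer algebra, and the verification would be recorded in a Jupyter notebook in the spirit of \nb{03}{F1}--\nb{03}{F4}.
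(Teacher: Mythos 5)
Your overall strategy --- reduce the rank condition to the vanishing of order-$10$ minors, compute symbolically, and strip off the factors excluded by the $V$- configuration hypotheses so as to be left with $\delta_1 \cdot \delta_2$ --- is the same as the paper's. The gap is in the reduction to a \emph{single} $10 \times 10$ determinant. Which row of $\Phi(P_i)$ can be discarded without shrinking the row span is dictated by which coordinate of $P_i$ is nonzero (via the syzygy in \Cref{eq:syzygy}), and this varies over the parameter space. If you make the choice adaptively, there is no single polynomial $\det \tilde{\Phi}$ to factor and saturate; if you fix the choice (say, keep $\phi_1(P_i), \phi_2(P_i)$ for every $i$, which is the sample minor the paper computes), then $\det \tilde{\Phi}$ acquires the extraneous factor $A_1A_2A_4(u_1A_1+u_2A_2)(v_1A_1+v_2A_4)$ --- the first coordinates of the five points --- and its vanishing no longer characterizes $\rk \Phi \le 9$: a $V$- configuration with, say, $A_2 = 0$ is perfectly admissible, so these factors cannot be saturated away on the strength of the hypotheses (distinctness and non-collinearity only account for the $u_i$, $v_i$ and $D$ factors). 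Concretely, your claimed endpoint --- that the saturated ideal is principal and equal to $(\delta_1\delta_2)$ --- fails for any fixed row selection, and the implication ``$\rk \Phi \le 9 \Rightarrow \delta_1\delta_2 = 0$'' is not established at points where the chosen minor vanishes for coordinate reasons.

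The paper closes exactly this gap with \Cref{lemma:minors}: the three determinants obtained from the three possible $2$-row selections out of a block $\Phi(P_i)$ (the other blocks being fixed) are in the ratio $A_i : B_i : C_i$. Hence all $3^5$ potentially nonzero order-$10$ minors share the common factor $u_1^2u_2^2v_1^2v_2^2 \, D^5 \, \delta_1 \delta_2$ and differ from it only by a product of one coordinate from each point; since every point has a nonzero coordinate, the simultaneous vanishing of \emph{all} these minors --- which is what $\rk \Phi \le 9$ means --- is equivalent to the vanishing of the common factor, and then to $\delta_1\delta_2 = 0$. You should either invoke this lemma or carry out an explicit case analysis over the row selections; with that addition your argument goes through (the $\SO_3(\C)$ normalization of $P_1$ is optional here --- the paper computes this determinant with fully generic coordinates).
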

\begin{proof}
Let
\[
  P_1 = (A_1: B_1: C_1) \,, \quad
  P_2 = (A_2: B_2: C_2) \,, \quad
  P_4 = (A_4: B_4: C_4) \,,
\]
and $P_3 = u_1 \, P_1 + u_2 \, P_2$ and $P_5 = v_1 \, P_1 + v_2 \, P_4$ for some $u_1, u_2, v_1, v_2$.
We find, via symbolic computation, that the determinant of
the submatrix of the matrix of conditions of $P_1, \dotsc, P_5$ obtained by selecting the first two rows from each $\Phi(P_i)$ for $i \in \{1, \dotsc, 5\}$ is
\begin{gather*}
  A_1A_2A_4(u_1A_1+u_2A_2)(v_1A_1+v_2A_4) \cdot u_1^2u_2^2v_1^2v_2^2 \cdot D^5 \cdot
  \delta_1(P_1,P_2,P_4) \cdot \delta_2(P_1,\dots,P_5) \,,
\end{gather*}
where $D$ is the determinant of the matrix whose rows are $P_1, P_2, P_4$.
As a consequence of \Cref{lemma:minors}, the order~$10$ minors of the matrix of conditions are polynomials of the form
\begin{gather*}
  X_1X_2X_4 X_3 X_5 \cdot u_1^2u_2^2v_1^2v_2^2 \cdot D^5 \cdot
  \delta_1(P_1,P_2,P_4) \cdot \delta_2(P_1,\dots,P_5) \,,
\end{gather*}
where $X_i$ varies among all coordinates of~$P_i$ for $i \in \{1, \dotsc, 5\}$.
Since each of $P_1, \dotsc, P_5$ has at least one non-zero coordinate,
and $D$ is non-zero, as well as are non-zero $u_1, u_2, v_1, v_2$ (since we assume that $P_1, P_2, P_4$ are not aligned and the points are distinct), we have that
$\rk \, \Phi(P_1, \dots, P_5) \leq 9$ if and only if
\[
  \delta_1(P_1, P_2, P_4) \cdot \delta_2(P_1, \dots, P_5) = 0 \,. \qedhere
\]
\end{proof}

\begin{lemma}
\label{lemma:special_case_rank_8}
Let $P_1, \dots, P_5$ be a $V$- configuration and assume that
$
  s_{12} =
  s_{22} =
  s_{14} =
  s_{44} = 0 \,.
$
Then the matrix $\Phi(P_1, \dots, P_5)$ has rank~$8$.
\end{lemma}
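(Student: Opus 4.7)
The plan is to bracket the rank of $\Phi(P_1,\ldots,P_5)$ tightly between $8$ and $8$. I would first extract the geometric content of the hypotheses: since $s_{22} = 0$ we have $P_2 \in \iso$, and combining with $s_{12} = 0$ yields $\sigma(P_1,P_2) = s_{11}s_{22} - s_{12}^2 = 0$, so by \Cref{proposition:sigma_tangency} the line $r_1 = P_1 \vee P_2$ is tangent to $\iso$; the tangency point must be $P_2$ itself because $P_2$ already lies in $r_1\cap\iso$. The same argument applied to the indices $1,4$ shows that $r_2 = P_1 \vee P_4$ is tangent to $\iso$ at $P_4$. Since $P_1, P_2, P_3$ are three distinct collinear points on the tangent $r_1$, and similarly $P_1, P_4, P_5$ lie on the tangent $r_2$, \Cref{proposition:three_distinct_ranks} yields $\rk\,\Phi(P_1,P_2,P_3) = \rk\,\Phi(P_1,P_4,P_5) = 5$.

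For the upper bound, I would invoke Grassmann's formula. Writing $V_{ijk}$ for the row span of $\Phi(P_i,P_j,P_k)$, the row span of $\Phi(P_1,\ldots,P_5)$ coincides with $V_{123} + V_{145}$, and both summands contain the row span of $\Phi(P_1)$, which has dimension $2$ by \Cref{remark:rank_2}. Hence
\[
  \rk\,\Phi(P_1,\ldots,P_5) = \dim(V_{123} + V_{145}) \leq 5 + 5 - 2 = 8.
\]

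For the lower bound, the plan is to exhibit an $8\times 8$ submatrix of $\Phi(P_1,\ldots,P_5)$ whose determinant does not vanish identically on the configurations in question, after first passing to a normal form. By \Cref{proposition:two_orbits} the $\SO_3(\C)$-action allows me to set $P_2 = (1:\iii:0)$; then $s_{12} = 0$ forces $P_1 = (-\iii:1:c)$ with $c \neq 0$, since $c = 0$ would give $P_1 \in \iso$ which, together with $P_1$ lying on the tangent at $P_2$, would force $P_1 = P_2$, contradicting distinctness. The constraints $s_{14} = s_{44} = 0$ then cut $\iso$ with the polar line of $P_1$; this line is not tangent to $\iso$ (as $P_1 \notin \iso$), so it meets $\iso$ in two distinct points, one of which is $P_2$, leaving $P_4$ uniquely determined. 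The points $P_3$ and $P_5$ become free parameters on $r_1$ and $r_2$, respectively. A symbolic computation in the accompanying notebook then verifies that a specific $8\times 8$ minor is a non-zero polynomial in $c$ and the two remaining parameters.

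The hard part will be engineering this last computation so that it runs efficiently. Without normalization, $\Phi(P_1,\ldots,P_5)$ is a $15 \times 10$ matrix whose entries are polynomials in fifteen coordinates, with a combinatorially large pool of $8\times 8$ minors to search. The $\SO_3(\C)$-reduction cuts the ambient parameters down to three, and \Cref{lemma:minors} permits discarding one row from each block $\Phi(P_i)$ (since $\phi_3$ is a rational combination of $\phi_1$ and $\phi_2$), shrinking the relevant matrix to a $10\times 10$ with a single $8\times 8$ minor to check per choice of two discarded columns; with these simplifications the verification becomes routine.
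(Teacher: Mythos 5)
Your overall strategy matches the paper's: reduce to a normal form via the $\SO_3(\C)$-action and finish with a symbolic verification. Your Grassmann-formula argument for the upper bound $\rk\,\Phi(P_1,\dots,P_5)\le 8$ is a nice, computation-free addition (the paper leaves the verification of the rank to the notebook): it correctly combines \Cref{proposition:three_distinct_ranks} (each aligned triple lies on a line tangent to $\iso$ at one of its points, so each of $\Phi(P_1,P_2,P_3)$ and $\Phi(P_1,P_4,P_5)$ has rank $5$) with \Cref{remark:rank_2} (the two row spans share the $2$-dimensional row span of $\Phi(P_1)$).

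The lower bound, however, has two genuine gaps. First, your normal form is incomplete: after fixing $P_2=(1:\iii:0)$, the condition $s_{12}=0$ only says that $P_1$ lies on the tangent line $x+\iii y=0$, whose points are the $(-\iii:1:c)$ together with $(0:0:1)$; the latter satisfies all the constraints ($P_1\notin\iso$, $P_1\ne P_2$) and is not covered by your parametrization. You can repair this by noting that the stabilizer of $P_2$ in $\SO_3(\C)$ acts transitively on the tangent line minus $P_2$, or, more economically, by normalizing $P_1$ instead of $P_2$: since $P_1\notin\iso$, one may set $P_1=(1:0:0)$ as the paper does, which forces $P_2$ and $P_4$ to be the two tangency points $(0:\pm\iii:1)$ and removes the parameter $c$ altogether. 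Second, and more seriously, exhibiting one $8\times 8$ minor that is a \emph{non-zero polynomial} in the remaining parameters only proves $\rk\ge 8$ for generic configurations; the rank could still drop to $7$ on the zero locus of that particular minor. The lemma is invoked in the proof of \Cref{theorem:rank_V} precisely to exclude rank $\le 7$ for \emph{every} configuration satisfying the hypotheses, so you must show that the ideal generated by \emph{all} the $8\times 8$ minors has empty zero locus after saturating by the nondegeneracy conditions (the points distinct, i.e.\ $u_1u_2\ne 0$ and $v_1v_2\ne 0$ in your parametrization), not merely that one minor is not identically zero.
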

\begin{proof}
By \Cref{proposition:sigma_tangency},
the lines~$P_1 \vee P_2$ and~$P_1 \vee P_4$ are tangent to~$\iso$ in~$P_2$ and~$P_4$, respectively.
The point~$P_1$ cannot be on~$\iso$, hence, using the
action of~$\SO_3(\C)$, we can assume $P_1 = (1: 0: 0)$.
Since every element of~$\SO_3(\C)$ leaves $\iso$ invariant,
when we transform the point~$P_1$ into $(1: 0: 0)$,
we transform the points~$P_2$ and~$P_4$ into, respectively,
the points $(0: \iii: 1)$ and $(0: -\iii: 1)$ (which are the common points to~$\iso$ and the tangent lines through~$P_1$).
Therefore, it is enough to study the
specific configuration of the points:
\begin{gather*}
  P_1 = (1: 0: 0) \,, \quad P_2=(0: \iii: 1) \,, \quad P_3=(u_1, \iii u_2, u_2) \,, \\
  P_4 = (0: -\iii: 1) \,, \quad P_5 = (v_1, -\iii v_2, v_2) \,,
\end{gather*}
where $(u_1: u_2), (v_1: v_2) \in \p^1$.
\nb{03}{F5} proves that the matrix of conditions of this configuration has rank~$8$.
\end{proof}

The following result gives a complete description of the possible ranks of matrix of conditions for $V$- configurations.

\begin{theorem}
\label{theorem:rank_V}
Let $P_1, \dots, P_5$ be a $V$- configuration. Then we have:
\begin{enumerate}
  \item $8 \leq \rk \,\Phi(P_1, \dots, P_5) \leq 10$\,;
  \item $\rk \,\Phi(P_1, \dots, P_5) \leq 9$ if and only if
  $\delta_1(P_1, P_2, P_4) \cdot \delta_2(P_1, \dots, P_5) =0$\,;
  \item $\rk \,\Phi(P_1, \dots, P_5) = 8$ if and only if
  \begin{itemize}
    \item $\delta_1(P_1, P_2, P_4) = 0$, \
    $\overline{\delta}_1(P_1, P_2, P_3) = 0$, \
    $\overline{\delta}_1(P_1, P_4, P_5) = 0$\,; or
    \item the line~$P_1 \vee P_2$ is tangent to~$\iso$ in~$P_2$ or~$P_3$
    and the line~$P_1 \vee P_4$ is tangent to~$\iso$ in~$P_4$ or~$P_5$;
    moreover, in this case we have $\delta_1(P_1, P_2, P_4) \neq 0$.
  \end{itemize}
  In particular, in both cases $\delta_2(P_1, P_2, P_3, P_4, P_5) = 0$ holds.
\end{enumerate}
\end{theorem}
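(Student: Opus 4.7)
The plan is to combine sub-configuration analysis, $\SO_3(\C)$-normalization, and symbolic computation. For item~(1), the upper bound $\rk \leq 10$ is immediate since $\Phi(P_1, \dots, P_5)$ has only $10$ columns. For the lower bound $\rk \geq 8$, I would look at the two sub-configurations $(P_1, P_2, P_3, P_4)$ and $(P_1, P_2, P_4, P_5)$, each of the type covered by \Cref{proposition:three_aligned_plus_one}: that proposition forces each corresponding sub-matrix to have rank at least~$8$ unless the associated line of three aligned points is tangent to~$\iso$ at one of them. If either sub-matrix already has rank~$8$, the full matrix inherits it. Otherwise both lines $r_1 = P_1 \vee P_2 \vee P_3$ and $r_2 = P_1 \vee P_4 \vee P_5$ are tangent to~$\iso$; the $\SO_3(\C)$-action of \Cref{proposition:two_orbits} allows us to pin $P_1$ to a normal form in each tangency sub-case ($P_1 = (1: 0: 0)$ when $P_1 \notin \iso$, and $P_1 = (1: \iii: 0)$ when $P_1 \in \iso$), after which a direct symbolic computation, mirroring \Cref{lemma:special_case_rank_8}, confirms $\rk \geq 8$.

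Item~(2) is exactly \Cref{proposition:d1d2}, and requires nothing further.

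For item~(3), I would parametrize the $V$- configuration as in the proof of \Cref{proposition:d1d2} by $P_3 = u_1 P_1 + u_2 P_2$ and $P_5 = v_1 P_1 + v_2 P_4$, form the $15 \times 10$ matrix $\Phi(P_1, \dots, P_5)$, and study the ideal~$J$ of its $9 \times 9$ minors. After saturating by the genericity conditions (points distinct, $P_1, P_2, P_4$ non-collinear, and $u_1 u_2 v_1 v_2 \neq 0$), the plan is to show via primary decomposition that $J$ splits exactly into the two components described in the statement. The reverse implications are then verified case by case: the first one follows from a direct rank check on the matrix, and the tangency case reduces to finitely many sub-configurations treatable by $\SO_3(\C)$-normalization and symbolic rank computation (the sub-case when both tangencies occur at $P_2$ and~$P_4$ being already \Cref{lemma:special_case_rank_8}). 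Finally, the claim $\delta_2 = 0$ in both cases is handled separately: in the tangency case, since $\delta_1 \neq 0$, item~(2) forces $\delta_2 = 0$; in the first case, substituting the identities $s_{11} s_{23} = -s_{12} s_{13}$ and $s_{11} s_{45} = -s_{14} s_{15}$ (obtained from the two $\overline{\delta}_1$ conditions) into $\delta_2 = s_{12} s_{13} s_{45} - s_{14} s_{15} s_{23}$ yields $s_{11}\, \delta_2 = 0$, and a short case distinction treats $s_{11} = 0$ (where one of $s_{12}, s_{13}$ and one of $s_{14}, s_{15}$ must vanish, making both monomials of $\delta_2$ vanish).

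The main obstacle is expected to be the primary decomposition of~$J$: the matrix $\Phi(P_1, \dots, P_5)$ is $15 \times 10$ with entries of high degree in many variables, so a raw Gröbner basis computation is infeasible. The strategy is to exploit the factorization of the $10 \times 10$ minors as $\delta_1 \cdot \delta_2$ (times known non-zero factors) discovered in the proof of \Cref{proposition:d1d2}, so as to identify the relevant $9 \times 9$ minors, and to chain saturations so that the decomposition actually terminates in the computer algebra system; the $\SO_3(\C)$-invariance is again instrumental in fixing coordinates of $P_1$ (and possibly of $P_2$ or~$P_4$) before attempting the decomposition.
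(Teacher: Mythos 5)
Your proposal follows essentially the same route as the paper: item~(1) via \Cref{proposition:three_aligned_plus_one} applied to the two sub-quadruples plus a normalized symbolic check in the doubly-tangent case, item~(2) by quoting \Cref{proposition:d1d2}, and item~(3) by a saturated primary decomposition of the minor ideal with $\SO_3(\C)$-normalization (the paper delegates exactly this to a notebook). Your explicit derivation of $\delta_2=0$ from the two $\overline{\delta}_1$ conditions (multiplying by $s_{11}$ and treating $s_{11}=0$ separately) is correct and is actually more transparent than the paper, which only asserts this implication in a later remark.

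One point is missing, though: in the tangency case you \emph{use} $\delta_1(P_1,P_2,P_4)\neq 0$ to conclude $\delta_2=0$ via item~(2), but that non-vanishing is itself part of the statement and never gets proved in your plan. The argument is short: tangency of $P_1\vee P_2$ at $P_2$ (or $P_3$) forces $\sigma(P_1,P_2)=s_{11}s_{22}-s_{12}^2=0$ with $s_{22}=0$, hence $s_{12}=0$, and likewise $s_{14}=0$; moreover $P_1\notin\iso$ (it is the vertex of two distinct tangents), so $s_{11}\neq 0$, and $s_{24}\neq 0$ because $\scl{P_2+P_4}{P_2+P_4}=2s_{24}$ and the secant $P_2\vee P_4$ meets $\iso$ only in $P_2$ and $P_4$. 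Therefore $\delta_1=s_{11}s_{24}-s_{12}s_{14}=s_{11}s_{24}\neq 0$. With this supplement your outline matches the paper's proof in substance.
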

\begin{proof}
If the rank is $\leq 7$, from
\Cref{proposition:three_aligned_plus_one} applied to $P_1, P_2, P_3, P_4$ and $P_1, P_4, P_5, P_2$,
the lines~$P_1 \vee P_2$ and~$P_1 \vee P_4$ are tangent to~$\iso$ (the first in $P_2$ or $P_3$ and the second in $P_4$ or $P_5$).
Then, by \Cref{proposition:sigma_tangency} and \Cref{lemma:special_case_rank_8} we get a contradiction.
This shows the first item.

The second item is \Cref{proposition:d1d2}.

We are left to proving the third item. By \Cref{lemma:special_case_rank_8} and \Cref{proposition:d1d2}, each of the two conditions implies that the rank is~$8$.
\nb{03}{F6} proves the converse; in particular, it shows that if $P_1 \in \iso$, then $\rk \,\Phi(P_1, \dots, P_5) \geq 9$. In the second case, it is necessarily $\delta_1(P_1,P_2,P_4)
\neq 0$; indeed, in the case $P_2, P_4 \in \iso$, then
\[
  \scl{P_2 +P_4}{P_2+P_4} =
  \scl{P_2}{P_2} + \scl{P_4}{P_4}+2\scl{P_2}{P_4} =
  2\scl{P_2}{P_4} \neq 0 \,,
\]
as the point~$P_2 + P_4$ is different from both~$P_2$ and~$P_4$, and the line~$P_2 \vee P_4$ has no other intersection points with~$\iso$. It follows that
$\delta_1 (P_1,P_2,P_4)=s_{11}s_{24} \neq 0$, as $P_1 \not\in \iso$. The cases $P_3 \in \iso$, respectively $P_5\in \iso$, are similar.
Therefore, if $\rk \, \Phi(P_1, \dots, P_5) = 8$, then one of the two conditions above must hold.
\end{proof}

\section{\texorpdfstring{$V$}{V}- configurations}
\label{V-configurations}

In order to get a $V$- configuration, there are two possible constructions:
\begin{itemize}
    \item[(a)] Recalling \Cref{lemma:characteristics_d1_d2}, it is quite easy to construct five points $P_1, \dots, P_5$ that are in a $V$- configuration
and such that $\delta_1(P_1, P_2, P_4)= 0$: the points~$P_1$
and~$P_2$ can be taken in an arbitrary way, $P_4$ has to be chosen in such
a way that it satisfies \Cref{lemma_delta_case1}
and~$P_3$ and~$P_5$ have to be chosen on the lines~$P_1 \vee P_2$ and~$P_1 \vee P_4$,
respectively. In particular, the corresponding locus of cubic curves
has dimension~$7$.
The construction of a random cubic
of five points as above, gives a smooth cubic curve whose $7$ eigenpoints
do not have other collinearities (in addition to those of a
$V$- configuration).
    \item[(b)] If we want a $V$- configuration that satisfies the condition
$\delta_2(P_1, \dots, P_5) = 0$, we choose $P_1$, $P_2$, $P_4$ arbitrarily;
we choose $P_3$ on the line~$P_1 \vee P_2$ and~$P_5$ on the line~$P_1 \vee P_4$ so that $P_3$ satisfies \Cref{proposition:definitionP3}, \Cref{defP3_4}.
Also in this case, the locus of cubics is of dimension~$7$.
We will show that in this situation there are further alignments.
\end{itemize}

\subsection{\texorpdfstring{$V$}{V}- configurations of rank~\texorpdfstring{$9$}{9}}
\label{rank_9}

In this section, we consider five points $P_1, \dots, P_5$ in a $V$- configuration and we assume that $\rk \, \Phi(P_1, \dots, P_5) = 9$.
We show that if $\delta_2(P_1, \dotsc, P_5) = 0$ and the unique corresponding cubic has a regular eigenscheme, then the two other eigenpoints $P_6$ and $P_7$ are aligned with $P_1$.

The following two results are instrumental to the symbolic computations in the proof of the main result.

\begin{lemma}
\label{lemma:construct_cubic}
Let $\mathcal{H}$ be a $9 \times 10$-submatrix of rank~$9$ of $\Phi(P_1, \dots, P_5)$.
Let $\mathcal{H}_i$ be the minor of~$\mathcal{H}$ given by deleting the $i$-th column ($i=1, \dots, 10$).
Then a polynomial defining the unique cubic~$C=V(f)$ with $P_1, \dotsc, P_5 \in \Eig{f}$ is
\[
  f(X) = \sum_{i=1}^{10}(-1)^i\det(\mathcal{H}_i)\cdot \mathcal{B}_i
  = \det \left(
  \begin{array}{c} \mathcal{H} \\ \mathcal{B} \end{array}
  \right) \,.
\]
where $\mathcal{B}$ is the monomial vector as in \Cref{eq:vector_basis}.
\end{lemma}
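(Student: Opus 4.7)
The plan is to read $f$ as a Laplace expansion and exploit the classical fact that a determinant with two identical rows vanishes. First I would observe that
\[
  \sum_{i=1}^{10}(-1)^i\det(\mathcal{H}_i)\cdot \mathcal{B}_i
  = \det\begin{pmatrix}\mathcal{H}\\ \mathcal{B}\end{pmatrix},
\]
since the right-hand side is the Laplace expansion along the last row of the displayed $10\times 10$ matrix and the signs $(-1)^{10+i}=(-1)^i$ agree. Hence the two formulas for $f$ in the statement coincide, and I can work with the determinantal expression.

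Next I would denote by $w_f=\bigl((-1)^i\det(\mathcal{H}_i)\bigr)_{i=1}^{10}$ the coefficient vector of $f$ with respect to~$\mathcal{B}$. For every row $h$ of $\mathcal{H}$, the inner product $h\cdot w_f$ is itself the Laplace expansion of $\det\begin{pmatrix}\mathcal{H}\\ h\end{pmatrix}$ along the last row, and this determinant vanishes because its bottom row duplicates one of the rows of $\mathcal{H}$. Hence $\mathcal{H}\,w_f=0$. The existence of a \emph{unique} cubic through $P_1,\dots,P_5$ as eigenpoints forces $\rk\,\Phi(P_1,\dots,P_5)=9$, so every row of $\Phi(P_1,\dots,P_5)$ lies in the row span of $\mathcal{H}$, and therefore $\Phi(P_1,\dots,P_5)\,w_f=0$, i.e.\ $P_1,\dots,P_5\in \Eig{f}$.

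Finally I would check that $f\neq 0$: since $\mathcal{H}$ has rank $9$, at least one of its maximal minors $\det(\mathcal{H}_i)$ is non-zero, so $w_f\neq 0$. By the uniqueness assumption, $f$ must be a non-zero scalar multiple of the defining equation of $C$, which is the content of the statement. I do not expect any real obstacle; the only care needed is to track the sign of the Laplace expansion and to notice that uniqueness of the cubic is precisely what upgrades the bound $\rk\,\mathcal{H}=9$ to the equality $\rk\,\Phi(P_1,\dots,P_5)=9$ required in the argument.
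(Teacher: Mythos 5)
Your argument is correct and is exactly what the paper's one-line proof ("follows from Cramer's rule for linear systems of maximal rank") compresses: the vector of signed maximal minors spans the kernel of $\mathcal{H}$ because each product $h \cdot w_f$ is a determinant with a repeated row, and rank~$9$ of the full matrix of conditions extends the vanishing to all of $\Phi(P_1,\dots,P_5)$. No gaps; you have simply written out the standard fact the authors cite.
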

\begin{proof}
 The statement follows from Cramer's rule for the resolution of linear systems of maximal rank.
\end{proof}

The matrix~$\mathcal{H}$ allows one also to compute the generators of the ideal of the eigenscheme.

\begin{prop}
\label{proposition:geiser1}
The three minors given in \Cref{eq:def_minors} are proportional to
\[
  \det \left(
  \begin{array}{c} \mathcal{H} \\ \phi_1(X) \end{array}
  \right),\quad
  \det \left(
  \begin{array}{c} \mathcal{H} \\ \phi_2(X) \end{array}
  \right), \quad
  \det \left(
  \begin{array}{c} \mathcal{H} \\ \phi_3(X) \end{array}
  \right)
\]
\end{prop}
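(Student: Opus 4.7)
The plan is to read off the claim from Cramer's rule, in exactly the same way that \Cref{lemma:construct_cubic} produces the cubic $f$. The three polynomials $g_1, g_2, g_3$ from \Cref{eq:def_minors} depend linearly on the coefficients of $f$, and the rows $\phi_1, \phi_2, \phi_3$ of the matrix of conditions are designed precisely so that $g_k(X)$ is the value at $X$ of this linear functional.

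First I would record the key identity. Writing any cubic as $f = \mathcal{B} \cdot w_f$ in the basis $\mathcal{B}$, one has $\partial_\bullet f = (\partial_\bullet \mathcal{B}) \cdot w_f$, so substituting into \Cref{eq:def_minors} and recalling \Cref{eq:vector_conditions} yields
\[
  g_k(X) \;=\; \phi_k(X) \cdot w_f, \qquad k = 1, 2, 3,
\]
for every point $X = (x:y:z)$. In other words, the row $\phi_k(X)$ encodes, by construction, the linear functional $f \mapsto g_k(X)$ in the monomial basis.

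Next I would invoke \Cref{lemma:construct_cubic}. Expanding $\det \left( \begin{array}{c} \mathcal{H} \\ \mathcal{B} \end{array} \right)$ along the last row shows that the coordinate vector of the unique cubic $f$ with $\{P_1, \dotsc, P_5\} \subset \Eig{f}$ is
\[
  w_f \;=\; \bigl( (-1)^i \det(\mathcal{H}_i) \bigr)_{i=1,\dotsc,10}.
\]
Substituting into the identity above gives
\[
  g_k(X) \;=\; \sum_{i=1}^{10} (-1)^i \det(\mathcal{H}_i) \, \phi_k(X)_i,
\]
which is precisely the cofactor expansion along the last row of $\det \left( \begin{array}{c} \mathcal{H} \\ \phi_k(X) \end{array} \right)$. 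This yields the asserted proportionality.

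There is no real obstacle here: the content is a tautological consequence of Cramer's rule once one recognises that $\phi_k(X)$ represents the linear form $f \mapsto g_k(X)$ on $\C[x,y,z]_3$. The only subtle point is the word "proportional" rather than "equal": it appears because \Cref{lemma:construct_cubic} determines both $f$ and its coordinate vector $w_f$ only up to a non-zero scalar, not because anything deeper is going on.
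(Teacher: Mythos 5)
Your proof is correct and is essentially the paper's argument: both rest on \Cref{lemma:construct_cubic} together with \Cref{eq:vector_conditions} and the linearity of the determinant in its last row, the paper phrasing this as differentiating under the determinant while you phrase it as a cofactor expansion dotted with $\phi_k(X)$. Your closing remark about the word ``proportional'' (it only reflects the scalar ambiguity in the representative $f$) is also the right explanation.
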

\begin{proof}
We have
\begin{align*}
  g_1 & = x \cdot \de_y f(X)- y \cdot \de_x f(X)  =
  x \cdot \de_y \det \left(
  \begin{array}{c} \mathcal{H} \\ \mathcal{B} \end{array}
  \right) - y \cdot
  \de_x \det \left(
  \begin{array}{c} \mathcal{H} \\ \mathcal{B} \end{array}
  \right) \\
  & = \det \left(
  \begin{array}{c} \mathcal{H} \\ x \cdot \de_y \mathcal{B} - y \cdot \de_x \mathcal{B} \end{array}
  \right)  = \det \left(
  \begin{array}{c} \mathcal{H} \\ \phi_1(X) \end{array}
  \right)
\end{align*}
and similarly for $g_2 = x \cdot \de_z f(X)- z \cdot \de_x f(X)$
and $g_3 = y \cdot \de_z f(X)- z \cdot \de_y f(X)$.
\end{proof}

\begin{prop}
\label{proposition:G_split}
Assume that $P_1, \dots, P_5 $ satisfy $\delta_2(P_1, \dotsc, P_5) = 0$ and $\rk \, \Phi(P_1, \dotsc, P_5) = 9$. Then the cubic curve
of equation $A_1 g_3 - B_1 g_2 + C_1 g_1=0$ contains the lines~$P_1 \vee P_2$ and~$P_1 \vee P_4$, where $g_1, g_2, g_3$ are given in \Cref{eq:def_minors}.
\end{prop}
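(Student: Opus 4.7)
The plan is to express $h := A_1 g_3 - B_1 g_2 + C_1 g_1$ as a $3 \times 3$ determinant and then exploit multilinearity along each of the two lines, using the eigenvector property that the hypothesis $\rk \Phi(P_1, \dotsc, P_5) = 9$ provides. A direct expansion using the formulas in \Cref{eq:def_minors} and collection by partial derivatives rewrites $h$ as
$$ h(x,y,z) \;=\; \det\begin{pmatrix} A_1 & B_1 & C_1 \\ x & y & z \\ \partial_x f & \partial_y f & \partial_z f \end{pmatrix}, $$
so that $h$ vanishes at $P=(x:y:z)$ precisely when the vectors $P_1$, $P$ and $\nabla f(P)$ are linearly dependent in $\C^3$. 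The rank-$9$ hypothesis ensures that the essentially unique cubic $f$ has $P_1, \dotsc, P_5$ as eigenpoints, meaning $\nabla f(P_i) = \lambda_i P_i$ for suitable $\lambda_i \in \C$.

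To show that $h$ vanishes on $P_1 \vee P_2$, I parametrize this line by $(\alpha, \beta) \mapsto \alpha P_1 + \beta P_2$ and apply multilinearity of the determinant in the middle row. The summand containing two copies of $P_1$ vanishes, so
$$ h(\alpha P_1 + \beta P_2) \;=\; \beta \cdot D(\alpha, \beta), \qquad D(\alpha, \beta) := \det \bigl(P_1,\, P_2,\, \nabla f(\alpha P_1 + \beta P_2) \bigr). $$
Since $\nabla f$ has degree $2$, the polynomial $D$ is homogeneous of degree $2$ in $(\alpha, \beta)$. Writing $P_3 = u_3 P_1 + v_3 P_2$ with $u_3, v_3 \neq 0$ (because $P_3$ is distinct from both $P_1$ and $P_2$), the eigenvector identities give $D(1,0) = D(0,1) = D(u_3, v_3) = 0$: in each case the third row of the defining matrix lies in the span of the first two. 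These are three distinct points of $\p^1$, so $D \equiv 0$, and hence $h$ vanishes identically on $P_1 \vee P_2$.

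The identical reasoning, with $P_2, P_3$ replaced by $P_4, P_5$, shows that $h$ also vanishes on $P_1 \vee P_4$. The only non-trivial step is recognizing the determinantal presentation of $h$; once that is in hand, everything reduces to multilinearity of the determinant together with the eigenvector condition at the three collinear eigenpoints lying on each of the two lines.
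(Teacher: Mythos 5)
Your proof is correct, and it reaches the conclusion by a genuinely different route from the paper. Both arguments hinge on the same determinantal presentation $A_1 g_3 - B_1 g_2 + C_1 g_1 = \det(P_1, P, \nabla f(P))$, but they diverge in how they show this determinant vanishes along $P_1 \vee P_2$. The paper invokes the Geiser map: by \Cref{proposition:contract_aligned} a line through three eigenpoints is contracted, and the description of the fibers in \Cref{eq:fibers} then forces $\left\langle P_1 \times P_2, \nabla f(\overline{P}) \right\rangle = 0$ for every $\overline{P}$ on the line, so $P_1 \times P_2$ is a nonzero solution of the relevant $3\times 3$ system and the determinant vanishes. You instead restrict the determinant to the line directly, use multilinearity to peel off a factor of $\beta$, and observe that the residual binary form $D(\alpha,\beta) = \det\bigl(P_1, P_2, \nabla f(\alpha P_1 + \beta P_2)\bigr)$ has degree $2$ yet vanishes at the three distinct parameters corresponding to $P_1$, $P_2$, $P_3$ (since at each the gradient lies in the span of $P_1$ and $P_2$), hence is identically zero. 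Your version is more elementary and self-contained, needing only the eigenpoint condition at the three collinear points and a zero-count for binary quadrics; the paper's version is less computation but leans on the Geiser-map machinery it has already set up and reuses elsewhere. Both proofs, yours and the paper's, in fact use only that $f$ is a cubic having the five points of the $V$-configuration as eigenpoints; the hypotheses $\delta_2 = 0$ and rank $9$ serve to guarantee that such an $f$ exists and is unique, exactly as you note.
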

\begin{proof}
The equation of the line~$P_1 \vee P_2$ can be expressed in the following way
\begin{equation}
\label{eq:lineP1P2}
  \left\langle P_1 \times P_2, (x,y,z) \right\rangle = 0 \,;
\end{equation}
since $P_1 \vee P_2$ is contracted by the Geiser map~$\gamma_{\Eig{f}}$,
and by the description of the fibers given in \Cref{eq:fibers}, such a line is contained in the conic of equation
\[
  \left\langle P_1 \times P_2, (\de_x f, \de_y f, \de_z f) \right\rangle = 0 \,.
\]
As a consequence, for any point $\overline{P} = (\bar x: \bar y: \bar z)$ of the line from \Cref{eq:lineP1P2}, the following equations are satisfied:
\[
  \left\{
  \begin{array}{l}
    \left\langle P_1 \times P_2, P_1 \right\rangle = 0 \,,\\[2pt]
    \bigl\langle P_1 \times P_2, \overline{P} \bigr\rangle = 0 \,,\\[2pt]
    \bigl\langle P_1 \times P_2, \nabla f (\overline{P}) \bigr\rangle = 0 \,.
  \end{array}
  \right.
\]
It follows that the linear system in the variables $X,Y,Z$
\[
  \left\{
  \begin{array}{l}
    \bigl\langle P_1, (X,Y,Z) \bigr\rangle = 0 \,,\\[2pt]
    \bigl\langle \overline{P}, (X,Y,Z) \bigr\rangle = 0 \,,\\[2pt]
    \bigl\langle \nabla f (\overline{P}),
    (X,Y,Z) \bigr\rangle = 0 \,.
  \end{array}
  \right.
\]
admits $P_1 \times P_2$ as a non-zero solution,
thus the determinant of the $3 \times 3$ coefficient matrix with rows
 $P_1$, $\overline{P}$, $\nabla f (\overline{P})$ is zero. This is equivalent to saying that $A_1 g_3 - B_1 g_2 + C_1 g_1$ vanishes at all the points~$\overline{P}$ of the line~$P_1 \vee P_2$.

We argue similarly for line~$P_1 \vee P_4$.
\end{proof}
\begin{prop}
\label{proposition:third_alignment}
Let $P_1, \dots, P_5$ be a $V$- configuration of five points such that
$\rk\, \Phi(P_1, \dots, P_5) = 9$ and
$\delta_2(P_1, \dots, P_5) = 0$. Let $V(f)$ be
the cubic with $P_1, \dots, P_5$
eigenpoints and suppose $\Eig{f}$ is
regular. Then the two other eigenpoints are aligned with~$P_1$.
\end{prop}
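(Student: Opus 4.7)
The plan is to first use \Cref{proposition:G_split} to force a geometric decomposition of a specific cubic in $I_{\Eig{f}}(3)$, then use regularity to locate the two remaining eigenpoints on the ``third'' factor of this decomposition, and finally verify by symbolic computation that this factor also passes through $P_1$.

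By \Cref{proposition:G_split}, the cubic $C := A_1 g_3 - B_1 g_2 + C_1 g_1$ contains both $\ell_2 := P_1 \vee P_2$ and $\ell_4 := P_1 \vee P_4$ as components, so $C$ factors as $\ell_2 \cdot \ell_4 \cdot \ell'$ for a unique third line $\ell'$. As a linear combination of $g_1, g_2, g_3$, the cubic $C$ belongs to $I_{\Eig{f}}(3)$ and therefore vanishes on the whole eigenscheme. Let $P_6, P_7$ denote the two eigenpoints of $f$ not among $P_1, \dotsc, P_5$. The regularity of $\Eig{f}$ together with \Cref{lemma:no_4_aligned} forbids four aligned eigenpoints; since $\ell_2$ and $\ell_4$ already contain three eigenpoints each, neither $P_6$ nor $P_7$ can lie on them. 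Hence $\{P_6, P_7\} \subset \ell'$ and $\ell' = P_6 \vee P_7$, so the proposition reduces to showing $P_1 \in \ell'$.

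For this last step I propose a symbolic verification. Using the $\SO_3(\C)$-equivariance from \Cref{proposition:two_orbits}, I normalize $P_1$ to $(1:0:0)$ when $P_1 \notin \iso$, or to $(1:\iii:0)$ when $P_1 \in \iso$. In the generic situation of \Cref{proposition:definitionP3}, case (\ref{defP3_4}), the hypothesis $\delta_2 = 0$ expresses $P_3$ explicitly in terms of $P_1, P_2, P_4, P_5$; \Cref{lemma:construct_cubic} then produces the unique cubic $f$ with $P_1, \dotsc, P_5$ as eigenpoints as a $10 \times 10$ determinant, from which \Cref{proposition:geiser1} recovers $g_1, g_2, g_3$. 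Polynomial division of $C$ by the known product $\ell_2 \cdot \ell_4$ isolates the linear form defining $\ell'$, and a direct substitution should show $\ell'(P_1) = 0$. The degenerate cases (\ref{defP3_1}) and (\ref{defP3_2}) of \Cref{proposition:definitionP3} must be handled by the same method, whereas case (\ref{defP3_3}) is ruled out by the rank-$9$ and regularity assumptions. The main obstacle will be precisely the scale of this computation: even after the $\SO_3(\C)$-normalization the intermediate polynomials are large multihomogeneous expressions in the free coordinates of $P_2, P_4$ and the parameter of $P_5$ on $P_1 \vee P_4$, and the verification will rely essentially on a computer algebra system.
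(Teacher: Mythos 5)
Your proposal follows essentially the same route as the paper: invoke \Cref{proposition:G_split} to split the cubic $A_1 g_3 - B_1 g_2 + C_1 g_1$ into the two lines of the $V$-configuration plus a third line, identify that third line with $P_6 \vee P_7$ via regularity, and then verify symbolically (using \Cref{proposition:geiser1} and \Cref{lemma:construct_cubic}) that $P_1$ lies on it --- which is exactly what the paper delegates to Notebook \nb{04}{F1}. Your write-up is in fact more explicit than the paper's about why the third factor must be $P_6 \vee P_7$ and about the case analysis from \Cref{proposition:definitionP3}, but the underlying argument is the same.
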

\begin{proof}
According to \Cref{proposition:G_split}, the polynomial
$C_1g_1-B_1g_2+A_1g_3$ splits into three linear factors $r_1$, $r_2$, and~$r_3$, which
correspond to the line~$P_1 \vee P_2$, the line~$P_1 \vee P_4$, and the line~$P_6 \vee P_7$. Using \Cref{proposition:geiser1}, Notebook \nb{04}{F1} shows that~$P_1\in P_6 \vee P_7$.
\end{proof}

\subsection{\texorpdfstring{$V$}{V}-configurations of rank~\texorpdfstring{$8$}{8}}
\label{rank_8}
In this section, we study the possible configurations of
eigenpoints for cubics with a $V$- configuration
and $\rk \, \Phi(P_1, \dots, P_5) = 8$. According
to~\Cref{theorem:rank_V}, we have to distinguish two cases
\begin{gather}
  \delta_1(P_1, P_2, P_4)=\overline{\delta}_1(P_1, P_2, P_3) =
  \overline{\delta}_1(P_1, P_4, P_5) = 0 \,, \text{or}
  \label{rk8_1} \\
  \sigma(P_1, P_2) = \sigma(P_1, P_4) = 0 \ \ \mbox{and} \ \ s_{22} = s_{44} = 0 \,;
  \label{rk8_2}
\end{gather}
the latter condition means that \emph{the two lines of
the $V$- configuration are tangent to $\iso$ in $P_2$ and~$P_4$}.
To start, we characterize when, in a $V$- configuration
$P_1, \dotsc, P_5$, the point~$P_1$ is singular.

\begin{prop}
\label{proposition:P1_sing}
Let $P_1, \dots, P_5$ be a $V$- configuration and let
$C = V(f)$ be a cubic with
$P_1, \dots, P_5\in \Eig{f}$. Then $P_1$ is
singular for~$C$ if
and only if $P_1$ is not on $\iso$ and one of the following conditions
is satisfied:
\begin{enumerate}
  \item $\delta_1(P_1, P_2, P_4) = 0$ and $\overline{\delta}_1(P_1, P_2, P_3) = 0$;
  \item $\delta_1(P_1, P_2, P_4) = 0$ and $\overline{\delta}_1(P_1, P_4, P_5) = 0$;
  \item $\overline{\delta}_1(P_1, P_2, P_3) = 0$ and
  $\overline{\delta}_1(P_1, P_4, P_5) = 0$.
\end{enumerate}
\end{prop}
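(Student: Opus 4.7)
The strategy is to combine Euler's identity for cubics, the $\SO_3(\C)$-invariance of eigenschemes, and a concluding symbolic verification of a rank condition. The first reformulation is this: since $P_1$ is an eigenpoint, $\nabla f(P_1) = \lambda_1 P_1$ for some scalar $\lambda_1$, and Euler's identity gives $3 f(P_1) = \langle P_1, \nabla f(P_1) \rangle = \lambda_1 s_{11}$. Hence $P_1$ is singular for $V(f)$ exactly when $\lambda_1 = 0$, which when $P_1 \notin \iso$ is equivalent to $f(P_1) = 0$, and when $P_1 \in \iso$ is the independent condition that the eigenvalue vanishes (while $f(P_1) = 0$ is automatic). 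In either case, singularity at $P_1$ is exactly one independent linear condition on the coefficients $(a_0, \ldots, a_9)$ of $f$ beyond the two eigenpoint conditions $\phi_1(P_1) \cdot w_f = \phi_2(P_1) \cdot w_f = 0$.

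Next, by \Cref{proposition:two_orbits} I may normalize $P_1$ via the $\SO_3(\C)$-action to one of the two orbit representatives: $P_1 = (1:0:0)$ in the case $P_1 \notin \iso$, or $P_1 = (1:\iii:0)$ in the case $P_1 \in \iso$. With $P_1 = (1:0:0)$ the eigenpoint conditions collapse to $a_1 = a_4 = 0$ and the singularity condition becomes $a_0 = 0$. I then parametrize the remaining two aligned points by $P_3 = u_1 P_1 + u_2 P_2$ and $P_5 = v_1 P_1 + v_2 P_4$, with $u_i, v_i$ nonzero so that the five points are pairwise distinct, and I assemble the $11 \times 10$ linear system whose rows are the eight eigenpoint conditions for $P_2, P_3, P_4, P_5$ together with the three singularity rows at $P_1$.

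The heart of the proof is then a symbolic verification, in the spirit of \nb{03}{F3}--\nb{03}{F6}. One computes the ideal generated by the $10 \times 10$ minors of this matrix, saturates by the natural non-degeneracy conditions (the five points are distinct, $P_1, P_2, P_4$ are not collinear, and $u_1 u_2 v_1 v_2 \neq 0$), and checks that the resulting ideal equals the intersection of the three ideals
\[
  \bigl(\delta_1(P_1,P_2,P_4),\, \overline{\delta}_1(P_1,P_2,P_3)\bigr),\
  \bigl(\delta_1(P_1,P_2,P_4),\, \overline{\delta}_1(P_1,P_4,P_5)\bigr),\
  \bigl(\overline{\delta}_1(P_1,P_2,P_3),\, \overline{\delta}_1(P_1,P_4,P_5)\bigr).
\]
Since the vanishing locus of an intersection is the union of the vanishing loci, this matches exactly the three conditions of the statement. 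The analogous computation for $P_1 = (1:\iii:0)$ returns the unit ideal after saturation, so no cubic $f$ with $P_1, \ldots, P_5 \in \Eig{f}$ has $P_1$ as a singular point; this delivers the requirement that $P_1 \notin \iso$.

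The main obstacle I expect is the symbolic matching in the last step: the $10 \times 10$ minors of an $11 \times 10$ matrix with many parameters ($P_2$, $P_4$, and the scalars $u_i, v_i$) are high-degree multihomogeneous polynomials, and matching their common vanishing locus to precisely the above three geometric ideals demands well-chosen saturations and a primary decomposition. The computation should nevertheless remain tractable because pinning $P_1$ to $(1:0:0)$ makes most entries of $\phi_i(P_1)$ vanish and forces many minors to factor through $\delta_1$ and the two $\overline{\delta}_1$ polynomials.
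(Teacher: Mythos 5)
The paper's own proof of this proposition is a bare pointer to a symbolic-computation notebook, so your overall strategy (reduce to a rank condition on an augmented matrix of linear conditions, normalize $P_1$ by \Cref{proposition:two_orbits}, compute and decompose the ideal of maximal minors after saturation) is in the same spirit, and your "only if" direction is sound: if the given $f$ is singular at $P_1$, then $w_f$ is a nonzero solution of your $11\times 10$ system, so all its order-$10$ minors vanish and the point conditions follow.

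The gap is in the "if" direction. The vanishing of the order-$10$ minors of your $11\times 10$ matrix says only that \emph{some} nonzero cubic has $P_2,\dots,P_5$ as eigenpoints and is singular at $P_1$; it says nothing about the \emph{given} cubic $C=V(f)$ unless the solution space of $\Phi(P_1,\dots,P_5)$ is one-dimensional, i.e.\ unless $\rk\,\Phi(P_1,\dots,P_5)=9$. By \Cref{theorem:rank_V} the rank can drop to $8$, in which case the cubics through the five eigenpoints form a pencil, and membership of one singular member does not propagate to the others. This is not a hypothetical worry: in the configuration of \Cref{rk8_2}, with $P_1=(1:0:0)$, $P_2=(0:\iii:1)$, $P_4=(0:-\iii:1)$, one checks $s_{12}=s_{23}=s_{14}=s_{45}=0$, so $\overline{\delta}_1(P_1,P_2,P_3)=\overline{\delta}_1(P_1,P_4,P_5)=0$ and $P_1\notin\iso$ (your condition (3) holds), yet the pencil of \Cref{proposition:rk8_2B} contains $f_1=x(2x^2+3y^2+3z^2)$, whose gradient at $P_1$ is $(6,0,0)\neq 0$. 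So your argument (and indeed the statement, read as quantified over every such $C$) breaks down on the rank-$8$ stratum: you must either restrict to $\rk\,\Phi(P_1,\dots,P_5)=9$, where "some cubic" and "the cubic" coincide, or treat the two rank-$8$ families of \Cref{rank_8} separately and decide singularity member by member within each pencil. As written, the step "the resulting ideal characterizes when $P_1$ is singular for $C$" silently conflates existence of a singular solution with singularity of the given one.
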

\begin{proof}
See Notebook \nb{04}{F2}.
\end{proof}

\begin{prop}
\label{proposition:char_rank_8}
  Let $P_1, \dotsc, P_5$ be a $V$- configuration such that \Cref{rk8_1} holds.
  Then $P_4$ is orthogonal to $s_{11} \, P_2 - s_{12} \, P_1$ and, up to swapping $2 \leftrightarrow 3$ and $4 \leftrightarrow 5$, it holds
  \[
   P_3 = (s_{12}^2+s_{11}s_{22}) \, P_1 - 2s_{11}s_{12} \, P_2 \,, \quad
   P_5 = (s_{14}^2+s_{11}s_{44}) \, P_1 - 2s_{11}s_{14} \, P_4 \,.
  \]
\end{prop}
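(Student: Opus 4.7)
The plan is to read the conclusion off almost directly from \Cref{lemma:characteristics_d1_d2}, once we know that $P_1 \notin \iso$ in the rank~$8$ setting. The orthogonality statement is the shortest part: by \Cref{rk8_1}, $\delta_1(P_1, P_2, P_4)=0$, and the first clause of \Cref{lemma:characteristics_d1_d2} (i.e.\ \Cref{lemma_delta_case1}) immediately translates this into $\scl{P_4}{s_{11}P_2 - s_{12}P_1} = 0$, which is exactly the stated orthogonality.

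For the two formulas for $P_3$ and $P_5$, the plan is to apply the second clause of \Cref{lemma:characteristics_d1_d2} (\Cref{lemma_delta_case2}) separately to $\overline{\delta}_1(P_1, P_2, P_3) = 0$ and to $\overline{\delta}_1(P_1, P_4, P_5) = 0$. Each application yields three possibilities. In both cases, the first possibility is that $P_1 \in \iso$ and the corresponding line through $P_1$ is tangent to $\iso$ at $P_1$, while the remaining two possibilities are the two candidate explicit formulas for $P_3$ (resp.\ $P_5$) as a linear combination of $P_1$ and $P_2$ (resp.\ $P_1$ and $P_4$), differing by the swap $2 \leftrightarrow 3$ (resp.\ $4 \leftrightarrow 5$).

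The key step is to rule out the $P_1 \in \iso$ branch. For this, I would invoke \Cref{theorem:rank_V}: its proof shows that in the rank~$8$ case (and in particular under \Cref{rk8_1}) one has $P_1 \notin \iso$, hence $s_{11} \neq 0$. Therefore only the second or third clause of \Cref{lemma_delta_case2} can hold for each of the two $\overline{\delta}_1$ equations, and these are exactly the two displayed formulas, up to the swaps $2 \leftrightarrow 3$ and $4 \leftrightarrow 5$ allowed in the statement.

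The only mild subtlety I expect is bookkeeping: \Cref{lemma_delta_case2} is stated so that the ``distinguished'' point is the one labelled $P_1$, so I have to apply it with the triple $(P_1,P_2,P_3)$ and then with $(P_1,P_4,P_5)$ in the correct order; after discarding the isotropic branch, the two surviving alternatives in each application are genuinely related by the transpositions $2 \leftrightarrow 3$ and $4 \leftrightarrow 5$, which matches the ``up to swapping'' clause in the proposition.
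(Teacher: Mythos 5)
Your proposal is correct, and its skeleton coincides with the paper's: the orthogonality of $P_4$ is read off from \Cref{lemma_delta_case1}, and the two formulas come from applying \Cref{lemma_delta_case2} to $\overline{\delta}_1(P_1,P_2,P_3)=0$ and $\overline{\delta}_1(P_1,P_4,P_5)=0$, with the two non-isotropic alternatives in each application related precisely by the transpositions $2\leftrightarrow 3$ and $4\leftrightarrow 5$. Where you genuinely diverge is in how the isotropic branch is discarded. The paper keeps all nine combinations of cases $(L_i)$, $(R_j)$ in play and eliminates every combination involving $(L_1)$ or $(R_1)$ one by one: the pair $(L_1,R_1)$ by the geometric observation that it would force $P_1\vee P_2 = P_1\vee P_4$, and the mixed pairs by the symbolic computation in \nb{04}{F3}. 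You instead kill $(L_1)$ and $(R_1)$ at the root by arguing $P_1\notin\iso$: \Cref{rk8_1} forces $\rk\,\Phi(P_1,\dots,P_5)=8$ by \Cref{theorem:rank_V}(3), while $P_1\in\iso$ forces rank at least $9$. This is logically sound and not circular (\Cref{theorem:rank_V} does not depend on this proposition), and it buys you a shorter argument with no new computation; the price is that the fact ``$P_1\in\iso$ implies rank $\geq 9$'' is established only inside the proof of \Cref{theorem:rank_V} (via \nb{03}{F6}), not in its statement, so you are leaning on a buried auxiliary claim rather than a quotable result. If you want to avoid that, note that the elimination can also be done directly: if $s_{11}=0$, then writing $P_3=u_1P_1+u_2P_2$ gives $\overline{\delta}_1(P_1,P_2,P_3)=u_2 s_{12}^2$, so $s_{12}=0$, and likewise $s_{14}=0$; then both $P_1\vee P_2$ and $P_1\vee P_4$ are the polar line of $P_1$, i.e.\ the unique tangent to $\iso$ at $P_1$, contradicting the $V$- configuration hypothesis that $P_1,P_2,P_4$ are not aligned.
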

\begin{proof}
  The statement about~$P_4$ follows from \Cref{lemma_delta_case1}.
  From \Cref{lemma_delta_case2}, we obtain
  \[
  \left\{ \!\!
  \scalemath{0.9}{
  \begin{array}{ll}
  (L_1) & \left\langle P_1, P_1 \right\rangle = \sigma(P_1, P_2) = 0 \,, \text{ or} \\
  (L_2) & P_3 = (s_{12}^2+s_{11}s_{22}) \, P_1 - 2s_{11}s_{12} \, P_2 \,, \text{ or} \\
  (L_3) & P_2 = (s_{13}^2+s_{11}s_{33}) \, P_1 - 2s_{11}s_{13} \, P_3
  \end{array}
  }
  \right.
  \;\; \text{and} \;\;
  \left\{ \!\!
  \scalemath{0.9}{
  \begin{array}{ll}
  (R_1) & \left\langle P_1, P_1 \right\rangle = \sigma(P_1, P_4) = 0 \,, \text{ or} \\
  (R_2) & P_5 = (s_{14}^2+s_{11}s_{44}) \, P_1 - 2s_{11}s_{14} \, P_4 \,, \text{ or} \\
  (R_3) & P_4 = (s_{15}^2+s_{11}s_{55}) \, P_1 - 2s_{11}s_{15} \, P_5
  \end{array}
  }
  \right.
  \]
  Conditions $(L_1)$ and $(R_1)$ are not compatible: indeed, together they imply $P_1 \vee P_2 = P_1 \vee P_4$.
  Notebook \nb{04}{F3} shows that $(L_1)$ is not compatible with $(R_2)$ or $(R_3)$ and $(R_1)$ is not compatible with $(L_2)$ or $(L_3)$.
  The possibilities that are left show the statement.
\end{proof}

\begin{rmk}
From \Cref{proposition:char_rank_8}, it follows that if we choose
\begin{itemize}
  \item $P_1$ and $P_2$ in an arbitrary way;
  \item $P_4$ in the $\p^1$
  space of points orthogonal to $s_{11} \, P_2 - s_{12} \, P_1$;
  \item $P_3 = (s_{12}^2+s_{11}s_{22}) \, P_1 - 2s_{11}s_{12} \, P_2$;
  \item $P_5 = (s_{14}^2+s_{11}s_{44}) \, P_1 - 2s_{11}s_{14} \, P_4$;
\end{itemize}
the matrix $M = \Phi(P_1, \dots, P_5)$ has rank $8$
and therefore $\dim \Lambda(M) = 1$ and the dimension of the variety
of the corresponding cubics is $6$.
Moreover, from~\Cref{proposition:P1_sing} all these cubics
are singular in $P_1$.
If we take a random point of this variety, the corresponding
cubic has $7$ distinct eigenpoints (and is as expected, singular in $P_1$) and the
$7$ points do not satisfy other collinearities in addition to those of the
$V$- configuration.
In particular, there is no alignment $(P_1, P_6, P_7)$.
Since \Cref{rk8_1} implies $\delta_2(P_1, \dots, P_5) = 0$, the hypothesis ``rank~$9$'' in~\Cref{proposition:third_alignment} is necessary.
An example can be found in \nb{04}{F4}.
\end{rmk}

\begin{rmk}
\label{remark:particular_cases}
In general, a cubic having eigenpoints $P_1, \dots, P_5$ in a $V$- configuration satisfying \Cref{rk8_1} has only the two alignments $(P_1, P_2, P_3)$ and $(P_1, P_4, P_5)$ among its eigenpoints;
in particular, it does not have the collinearity $(P_1, P_6, P_7)$.
Since \Cref{rk8_1} implies $\delta_2(P_1, \dots, P_5) = 0$, the hypothesis ``rank $9$'' in~\Cref{proposition:third_alignment} is necessary.
\end{rmk}

Concerning possible sub-cases with further collinearities of the points,
the following results hold:

\begin{prop}
\label{proposition:three_d_three_alignments}
If $P_1, \dots, P_5$ satisfy \Cref{rk8_1},
then, in the pencil $\Lambda \bigl( \Phi(P_1, \dotsc, P_5)\bigr)$ there is
a cubic curve with $7$ eigenpoints with the following three alignments:
\[
 (P_1, P_2, P_3) \,, \quad (P_1, P_4, P_5) \,, \quad \text{and} \quad (P_1, P_6, P_7) \,.
\]
No choices of $P_1, \dots, P_5$ allow one to obtain further alignments of the $7$ eigenpoints.
\end{prop}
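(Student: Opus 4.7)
The plan is to build the pencil of cubics with the prescribed eigenpoints, exhibit a distinguished member of the pencil where a third alignment through $P_1$ appears, and then rule out all other potential collinearities by symbolic computation.

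First, since condition \eqref{rk8_1} forces $P_1 \notin \iso$ (Proposition \ref{proposition:P1_sing}), Proposition \ref{proposition:two_orbits} together with the $\SO_3(\C)$-invariance of eigenschemes lets us normalize $P_1 = (1:0:0)$. Proposition \ref{proposition:char_rank_8} then reduces the configuration to choosing $P_2 \in \p^2$ and $P_4$ in the $\p^1$ of points orthogonal to $s_{11} P_2 - s_{12} P_1$; the points $P_3, P_5$ are determined by the explicit formulas given there. Since $\rk \Phi(P_1,\dotsc,P_5) = 8$, the linear system $\Lambda(\Phi(P_1,\dotsc,P_5))$ is a pencil of cubics $\{f_t\}_{t \in \p^1}$, generated via Lemma \ref{lemma:construct_cubic} from two rank-$9$ submatrices of $\Phi$.

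Next, for each $f_t$, Proposition \ref{proposition:geiser1} yields the generators $g_1^{(t)}, g_2^{(t)}, g_3^{(t)}$ of the ideal of $\Eig{f_t}$, and the two residual eigenpoints $P_6(t), P_7(t)$ arise as the extra intersection points of these three cubics. By the last assertion of Theorem \ref{theorem:rank_V}, condition \eqref{rk8_1} forces $\delta_2(P_1,\dotsc,P_5) = 0$, so the argument of Proposition \ref{proposition:G_split} applies uniformly to the whole pencil: the cubic $C_1 g_1^{(t)} - B_1 g_2^{(t)} + A_1 g_3^{(t)}$ contains both lines $P_1 \vee P_2$ and $P_1 \vee P_4$, and therefore splits as their product times a residual line $\ell(t)$ whose coefficients depend linearly on $t$. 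The condition that $\ell(t)$ pass through $P_1$ is a single linear equation in $t$, and hence selects a unique cubic $f_{t^\ast}$ in the pencil. For that cubic the residual line, which contains $P_6$ and $P_7$ (these being the intersections of $\ell(t^\ast)$ with the remaining components coming from the Geiser-type splitting), passes through $P_1$, so $P_1, P_6, P_7$ are collinear.

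For the negative claim, for each of the remaining triples among the seven eigenpoints of $f_{t^\ast}$, collinearity amounts to the vanishing of a $3 \times 3$ determinant in the homogeneous coordinates, that is, a polynomial in the free parameters $(P_2, P_4)$. One shows that none of these polynomials vanish identically on the parameter space, so the only alignments occurring for generic choices are precisely the three listed. The main obstacle is executing this last verification: once $P_6(t^\ast), P_7(t^\ast)$ are made explicit by elimination against $g_1^{(t^\ast)}, g_2^{(t^\ast)}, g_3^{(t^\ast)}$, checking non-vanishing of the residual collinearity polynomials is a substantial but essentially mechanical symbolic computation, of the same flavour as the Jupyter-notebook verifications used throughout this section.
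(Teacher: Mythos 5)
Your overall architecture matches the paper's: normalize $P_1=(1:0:0)$ via \Cref{proposition:two_orbits} (the paper justifies $P_1\notin\iso$ by the proof of \Cref{theorem:rank_V}, which shows that $P_1\in\iso$ forces rank $\geq 9$; \Cref{proposition:P1_sing} is not quite the right citation, since it characterizes singularity of $P_1$ rather than excluding $P_1\in\iso$), build the pencil explicitly, and finish by symbolic computation. Your existence argument is a pleasant geometric refinement: since \Cref{proposition:contract_aligned} contracts both lines of the $V$-configuration for every member $f_t$ of the pencil with $0$-dimensional eigenscheme, the splitting argument of \Cref{proposition:G_split} does apply fibrewise, the residual line $\ell(t)$ depends linearly on $(l_1:l_2)$, and the incidence $P_1\in\ell(t)$ is a linear condition on $\p^1$, hence always solvable. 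Two small points you should still address there: you need the distinguished member $f_{t^\ast}$ to actually have seven (distinct) eigenpoints, which is not automatic for a special member of the pencil; and $P_6,P_7\in\ell(t)$ should be justified by noting they cannot lie on $P_1\vee P_2$ or $P_1\vee P_4$ without producing four aligned eigenpoints.

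The genuine gap is in the negative claim. The proposition asserts that \emph{no} choice of $P_1,\dots,P_5$ satisfying \Cref{rk8_1} produces a further alignment among the seven eigenpoints of the distinguished cubic; this is a universal statement over the whole parameter space. Your plan — show that each residual collinearity determinant does not vanish \emph{identically} in $(P_2,P_4)$ — only establishes the claim for \emph{generic} parameters, leaving open the possibility of a proper subvariety of configurations on which a fourth alignment appears. This distinction is not pedantic here: \Cref{proposition:d2_6align} shows that, for the very same five points, other members of the pencil do acquire many additional alignments, so extra collinearities genuinely occur on positive-codimension loci in closely related situations. The verification actually required (and the one the paper's notebook performs) is to show that the ideal generated by the defining equations of the \Cref{rk8_1} configuration, the equations singling out $f_{t^\ast}$ and its residual eigenpoints, and one additional $3\times 3$ collinearity determinant becomes the unit ideal after saturating by the nondegeneracy conditions (distinctness of the points, non-collinearity of $P_1,P_2,P_4$). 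Replacing ``not identically zero'' by ``empty zero locus after saturation'' is what turns your argument into a proof of the stated proposition.
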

\begin{proof}
As observed in the proof of~\Cref{theorem:rank_V}, if $P_1 = (1: \iii: 0)$,
the matrix
$\Phi(P_1, \dots, P_5)$ cannot have rank smaller than $9$, so the only
case to consider is $P_1 = (1: 0: 0)$.
Notebook \nb{04}{F5} provides the proof of the statement.
\end{proof}
Moreover, we have:
\begin{prop}
\label{proposition:d2_6align}
If the five points $P_1, \dots, P_5$ satisfy \Cref{rk8_1}
and if we impose the condition that there is an eigenpoint, say $P_6$, aligned with $P_2$ and~$P_4$, then the eigenpoints satisfy all these
alignments:
\[
  (P_1, P_2, P_3), (P_1, P_4, P_5),
  (P_2, P_4, P_6), (P_2, P_5, P_7),
  (P_3, P_4, P_7), (P_3, P_5, P_6) \,.
\]
Hence the points~$P_6$ and~$P_7$ are determined by $P_1, \dots, P_5$
since
$P_6 = (P_2 \vee P_4) \cap (P_3 \vee P_5)$
and $P_7 = (P_3 \vee P_4) \cap (P_2 \vee P_5)$.
A similar result holds if we take $P_3$ in place of~$P_2$ or~$P_5$
in place of~$P_4$.
\end{prop}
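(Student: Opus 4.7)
The plan is a symbolic verification carried out after normalizing the configuration. By \Cref{proposition:P1_sing}, hypothesis \Cref{rk8_1} forces the cubics in the pencil $\Lambda\bigl(\Phi(P_1,\dots,P_5)\bigr)$ to be singular at $P_1$, so in particular $P_1 \notin \iso$, and \Cref{proposition:two_orbits} lets me apply an element of $\SO_3(\C)$ sending $P_1$ to $(1:0:0)$ without loss of generality. In these coordinates, $\delta_1(P_1,P_2,P_4)=0$ reduces to $B_2B_4+C_2C_4=0$, and \Cref{proposition:char_rank_8} expresses $P_3$ and $P_5$ as explicit combinations of $(P_1,P_2)$ and $(P_1,P_4)$, respectively. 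The moduli of the configuration thus reduce to a small number of parameters in the free coordinates of $P_2$ and $P_4$.

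Next I would compute the two-dimensional kernel of $\Phi(P_1,\dots,P_5)$ explicitly and pick a basis $\{v_0,v_1\}$, so the pencil is $f_{(s:t)} = (s\,v_0 + t\,v_1)\cdot\mathcal{B}$ with $\mathcal{B}$ the monomial vector of \Cref{eq:vector_basis}. Parametrize $P_6 = \alpha P_2 + \beta P_4$ on $P_2\vee P_4$. The requirement that $P_6$ be an eigenpoint of some $f_{(s:t)}$ is $\Phi(P_6)(s\,v_0+t\,v_1)=0$, which by \Cref{lemma:minors} amounts to two bilinear relations in $(s:t)$ and $(\alpha:\beta)$. Solving these symbolically (after saturating by the open conditions that $P_6 \notin \{P_2,P_3,P_4,P_5\}$) isolates the specific cubic $f^{\star}$ in the pencil that admits a sixth eigenpoint on the line $P_2\vee P_4$.

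Once $f^{\star}$ is in hand, I would form its three Geiser minors $g_1,g_2,g_3$ of \Cref{eq:def_minors}. Since $\deg\Eig{f^{\star}}=7$ by \Cref{theorem:nonempty}, only one further eigenpoint $P_7$ remains to be identified. I would restrict $(g_1,g_2,g_3)$ to each of the candidate lines $P_2\vee P_5$, $P_3\vee P_4$, and $P_3\vee P_5$ and verify symbolically that each restriction vanishes at the predicted triple of points; this forces $P_7 \in (P_2\vee P_5)\cap(P_3\vee P_4)$ and $P_6 \in (P_2\vee P_4)\cap(P_3\vee P_5)$, which, combined with the alignments $(P_1,P_2,P_3)$ and $(P_1,P_4,P_5)$ already present, yields all six collinearities in the statement. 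By \Cref{proposition:contract_aligned}, each alignment corresponds to a line contracted by the Geiser map $\gamma_{\Eig{f^{\star}}}$, providing a structural sanity check. The symmetry statement at the end follows from the symmetry of \Cref{rk8_1} and of \Cref{proposition:char_rank_8} under $P_2 \leftrightarrow P_3$ and $P_4 \leftrightarrow P_5$.

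The main obstacle will be controlling the combinatorial and symbolic complexity: carrying out the minor computations over the ring of parameters, keeping track of saturations that remove the degenerate loci (points coinciding, special configurations where the kernel jumps, the case $P_6=P_7$), and making sure the two intersection points are labelled consistently so that $P_6$ is indeed on $P_2\vee P_4$ rather than on $P_3\vee P_4$. All of this is straightforward but tedious, and is naturally delegated to a computer algebra notebook analogous to those cited in the earlier proofs of this section.
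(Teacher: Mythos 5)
Your proposal is correct and follows essentially the same route as the paper's proof: normalize $P_1=(1:0:0)$ (legitimate because rank~$8$ forces $P_1\notin\iso$), construct $P_3$ and $P_5$ from \Cref{proposition:char_rank_8}, impose that a point of $P_2\vee P_4$ is an eigenpoint of some cubic in the pencil, and verify the remaining collinearities by a symbolic computation delegated to a notebook, which is exactly what the paper does (your condition $\Phi(P_6)(s\,v_0+t\,v_1)=0$ is equivalent to the paper's $\rk\,\Phi(P_1,\dotsc,P_6)<10$). The one small slip is the justification of $P_1\notin\iso$: \Cref{proposition:P1_sing} takes $P_1\notin\iso$ as part of its characterization rather than yielding it, so the correct source is the rank analysis in the proof of \Cref{theorem:rank_V} (if $P_1\in\iso$ the rank is at least $9$); this does not affect the validity of the argument.
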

\begin{proof}
We define the points $P_1, \dots, P_5$ as in the previous proposition, then
we consider a point of the line $P_2 \vee P_4$ and we impose that it is
an eigenpoint, i.e., we impose that $\rk \, \Phi(P_1, \dotsc, P_6) < 10$.
This request defines a point~$P_6$ and it turns out that $P_6$ is also aligned
with~$P_3$ and~$P_5$. We then define $P_7$ as the point
$(P_2 \vee P_5) \cap (P_3 \vee P_4)$. It turns out that
$\rk \, \Phi(P_1, \dots, P_7) < 10$, so $P_7$ is an eigenpoint. The seven
points $P_1, \dotsc, P_7$ satisfy the above collinearities.
This is shown in Notebook \nb{04}{F6}.
\end{proof}

\Cref{proposition:three_d_three_alignments} and \Cref{proposition:d2_6align} exhaust all the possible configurations
of collinearities in case of \Cref{rk8_1}.
Now we consider the \Cref{rk8_2}.
A generic cubic which satisfies \Cref{rk8_2} is
a cubic of the one-dimensional linear system
$\Lambda\bigl(\Phi(P_1, \dotsc, P_5)\bigr)$, where:
\begin{itemize}
  \item $P_1$ is any point of the plane (not on $\iso$);
  \item $P_2$ and $P_4$ are the two tangency points to $\iso$ given by the tangents from~$P_1$;
  \item $P_3$ is any point on the line~$P_1 \vee P_2$ (different from~$P_1$ and~$P_2$)
  and $P_5$ is any point on the line~$P_1 \vee P_4$ (different from~$P_1$ and~$P_4$).
\end{itemize}
The variety of all the cubics with a $V$- configuration
that satisfies condition~(\ref{rk8_2}) is therefore five-dimensional.
The five points satisfy the condition $\delta_2(P_1, P_2, P_3, P_4, P_5) = 0$.

The reciprocal position of the eigenpoints of the cubics of this family
is described by the following:
\begin{prop}
\label{proposition:rk8_2B}
The generic cubic of the family of cubics satisfying \Cref{rk8_2}
has seven eigenpoints with the alignments:
\[
  (P_1, P_2, P_3), \ (P_1, P_4, P_5), \ (P_1, P_6, P_7)
\]
Among these points we have the relation
$\scl{P_1 \times P_6}{P_3\times P_5}=0$
(i.e., the lines~$P_1 \vee P_6$ and~$P_3 \vee P_5$ are orthogonal).
In the family there is a sub-family of cubics whose eigenpoints have the following alignments:
\[
  (P_1, P_2, P_3),\ (P_1, P_4, P_5),\ (P_1, P_6, P_7),\ (P_2, P_4, P_6).
\]
In this case the points~$P_6$ and~$P_7$ are given by the formulas:
\begin{equation}
\label{eq:formulaeP6_P7}
P_6 = s_{15}s_{34}\, P_2 + s_{13}s_{25}\, P_4, \quad
P_7 = s_{15}(s_{26}s_{46}+s_{24}s_{66})\, P_1+ s_{11}s_{24}s_{56}\, P_6
\end{equation}
and a sub-family whose eigenpoints have the following
alignments:
\[
  (P_1, P_2, P_3),\ (P_1, P_4, P_5), \
  (P_1, P_6, P_7),\ (P_2, P_5, P_6), \
  (P_3, P_4, P_6),\ (P_3, P_5, P_7)
\]
In this case, the point~$P_6$ (given, for instance, by the formula
$P_6 = s_{15} \, P_3 + s_{13} \, P_5$) is obtained as
the intersection~$(P_2 \vee P_5) \cap (P_3 \vee P_4)$ and consequently
$P_7 = (P_1 \vee P_6) \cap (P_3 \vee P_5)$.\\
No other collinearities among the eigenpoints are possible.
\end{prop}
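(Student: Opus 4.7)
The plan is to exploit the $\SO_3(\C)$-invariance established in \Cref{invariance} to reduce to a normal form, and then proceed along the same lines as the proof of \Cref{lemma:special_case_rank_8}. Under \Cref{rk8_2}, the tangency condition forces $s_{12} = s_{22} = s_{14} = s_{44} = 0$, and $P_1 \not\in \iso$ (otherwise $P_2 = P_1$). By \Cref{proposition:two_orbits}, we may set $P_1 = (1:0:0)$, and since $\SO_3(\C)$ preserves $\iso$, the two tangent points from $P_1$ become $P_2 = (0:\iii:1)$ and $P_4 = (0:-\iii:1)$. Then $P_3 = (u_1 : \iii u_2 : u_2)$ and $P_5 = (v_1 : -\iii v_2 : v_2)$ for $(u_1 : u_2), (v_1 : v_2) \in \p^1$, exactly as in the proof of \Cref{lemma:special_case_rank_8}.

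Next, I will compute $\Phi(P_1, \dotsc, P_5)$ explicitly in these coordinates and extract the one-dimensional pencil $\Lambda(\Phi(P_1, \dotsc, P_5))$ using \Cref{lemma:construct_cubic}. For a generic $f_\lambda$ in the pencil, \Cref{proposition:geiser1} gives explicit polynomials for $g_1, g_2, g_3$, and the polynomial $A_1 g_3 - B_1 g_2 + C_1 g_1 = C_1 g_1 - B_1 g_2 + A_1 g_3$ should factor into three linear forms: two corresponding to the tangent lines $P_1 \vee P_2$ and $P_1 \vee P_4$ (which are contracted by the Geiser map by \Cref{proposition:contract_aligned}), and a third linear form~$\ell$. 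The vanishing of $C_1 g_1 - B_1 g_2 + A_1 g_3$ at any point collinear with $P_1$ that is an eigenpoint, together with the fact that $\ell$ must pass through $P_1$ (since the full polynomial vanishes at $P_1$ and the two tangent lines already account for the components through $P_1$ supporting $P_2, P_3, P_4, P_5$), will yield the alignment $(P_1, P_6, P_7)$; this is analogous to \Cref{proposition:third_alignment}, but in the rank~$8$ setting it is the generic cubic in the pencil rather than the unique cubic. The orthogonality $\scl{P_1 \times P_6}{P_3 \times P_5} = 0$ will be checked as a bilinear identity in $u_1, u_2, v_1, v_2$ and the pencil parameter, after computing $P_6$ explicitly as the intersection of $\ell$ with the residual conic obtained by dividing $g_i$ by the appropriate linear factors.

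For the two subfamilies, I will impose the relevant extra alignment as a determinantal condition: $(P_2, P_4, P_6)$ collinear for the first subfamily, and $(P_2, P_5, P_6)$ (equivalently $(P_3, P_4, P_6)$) for the second. In each case, this specializes the pencil parameter to an explicit function of the $u_i, v_j$, and on this locus I will verify that the claimed additional collinearities hold automatically and that the formulas for $P_6$ and $P_7$ given in \Cref{eq:formulaeP6_P7}, as well as the formula $P_6 = s_{15} P_3 + s_{13} P_5$ in the second subfamily, produce the correct eigenpoints. The latter check amounts to substituting these points into the minors $g_1, g_2, g_3$ and confirming they vanish.

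To conclude that no other collinearities are possible, I will enumerate the residual triples of points among $\{P_1, \dotsc, P_7\}$ not covered by the listed alignments, express each as the vanishing of a $3 \times 3$ determinant in the parameters, and, after saturating by the conditions that the seven points are distinct and that no four are collinear (\Cref{lemma:no_4_aligned}), show via primary decomposition that each such vanishing falls into one of the subfamilies already described or contradicts the hypotheses of the $V$-configuration. The main obstacle will be the algebraic complexity: even after the $\SO_3(\C)$-reduction, the pencil carries one parameter on top of the two parameters parametrizing $P_3$ and $P_5$, so the eigenpoint expressions are lengthy; the key simplification is the factorization of $C_1 g_1 - B_1 g_2 + A_1 g_3$, which makes the computation of $P_6$ and $P_7$ linear-algebraic rather than resultant-based. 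As in analogous proofs earlier in the paper, the full verification will be carried out in a companion notebook in the spirit of \nb{04}{F1}--\nb{04}{F6}.
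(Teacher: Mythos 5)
Your proposal is correct and follows essentially the same route as the paper: normalize via the $\SO_3(\C)$ action to $P_1=(1:0:0)$, $P_2=(0:\iii:1)$, $P_4=(0:-\iii:1)$, write down the explicit one-parameter pencil via \Cref{lemma:construct_cubic}, locate $P_6,P_7$ on a line through $P_1$ orthogonal to $P_3\vee P_5$, and then specialize the pencil parameter case by case to detect the extra alignments, with the heavy lifting done symbolically in a notebook. The only cosmetic difference is that you reach the alignment $(P_1,P_6,P_7)$ through the Geiser-map factorization of $A_1g_3-B_1g_2+C_1g_1$ (which indeed still applies in the rank-$8$ setting, since \Cref{proposition:contract_aligned} needs only a zero-dimensional eigenscheme), whereas the paper saturates the eigenscheme ideal directly and reads off that it is generated by a line through $P_1$ and a conic.
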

\begin{proof}
By considering again the action of~$\SO_3(\C)$, we can assume:
\[
  P_1 = (1: 0: 0), \quad
  P_2 = (0: \iii: 1), \quad
  P_4 = (0: -\iii: 1)
\]
and $P_3 = u_1 \, P_1 + u_2 \, P_2$, $P_5 = v_1 \, P_1 + v_2 \, P_4$.
In this situation, it
is easy to see from \Cref{lemma:construct_cubic} that $\Lambda\bigl(\Phi(P_1, \dots, P_5)\bigr)$
is the following pencil of cubic forms:
\[
  f(l_1, l_2) = l_1 \, f_1 + l_2 \, f_2
\]
where $l_1, l_2$ are parameters and
\begin{align*}
 f_1 & = x \cdot \left(2x^{2} + 3 y^{2} + 3 z^{2}\right)\\
  f_2 & = (y + \iii z) \cdot (y - \iii z)
  \cdot \bigl(2 \iii x u_{2} v_{2} + y (u_{2} v_{1}- u_{1} v_{2})
  - \iii z (u_{2} v_{1} + u_{1} v_{2})\bigr)
\end{align*}
The explicit expression $f(l_1, l_2)$ allows us to
construct the ideal of the seven eigenpoints and (after
saturations w.r.t.\ the condition that the five points $P_1, \dotsc, P_5$ are distinct),
we get an ideal generated by a line~$r$ and a conic~$\Gamma$, whose zeros are
the points~$P_6$ and~$P_7$. Since the line~$r$ contains the point~$P_1$,
the points $P_1, P_6, P_7$ are collinear and $r$ results orthogonal to~$P_3 \vee P_5$.
In order to see if there are further collinearities among the
eigenpoints, we have to distinguish three cases (up to permutation
of the indices): $P_2, P_4, P_6$ are collinear or $P_2, P_5, P_6$ are
collinear, or $P_3, P_5, P_6$ are collinear. In the first case, $P_6$
is the point $r \cap (P_2\vee P_4)$. If it is an eigenpoint, it must be
on $\Gamma$, hence $l_1$ and~$l_2$ have a specific value which gives a sub-family
of~$f(l_1, l_2)$ and we can compute the explicit coordinates of
all the seven eigenpoints of the cubics of this family. The other two
cases can be studied in a similar way. See \nb{04}{F7}.
\end{proof}

\begin{rmk}
\label{remark:three_orthog} For further references, it is perhaps worth noting that the explicit
construction of the
points in~\Cref{proposition:d2_6align} and~\Cref{proposition:rk8_2B} allows one to verify
several relations among them. In particular, the seven eigenpoints
of~\Cref{proposition:d2_6align} are such that
\[
\scl{P_1\times P_2}{P_1 \times P_4}=0, \quad
\scl{P_2\times P_4}{P_3 \times P_5}=0, \quad
\scl{P_3\times P_4}{P_2 \times P_5}=0;
\]
and it holds:
\[
P_4 = (P_2\times P_3)s_{25}s_{35}-s_{23}(P_2\times P_5)s_{35}+ s_{23}s_{25}(P_3\times P_5).
\]
In the first sub-family described in~\Cref{proposition:rk8_2B} we have
$P_1 = P_2 \times P_4$, so
\[
\scl{P_1\times P_2}{P_2 \times P_4}=0, \quad
\scl{P_1\times P_4}{P_2 \times P_4}=0, \quad
\scl{P_1\times P_6}{P_2 \times P_4}=0.
\]
In the second family:
\[
\scl{P_1\times P_4}{P_3 \times P_4}=0, \quad
\scl{P_1\times P_2}{P_2 \times P_5}=0, \quad
\scl{P_3\times P_5}{P_1 \times P_6}=0,
\]
hence it holds:
\[
P_3 = (P_1 \times P_5)s_{16}s_{56}-s_{15}(P_1\times P_6)s_{56}+s_{15}s_{16}(P_5 \times P_6) \,.
\]
\end{rmk}

\section{The locus of cubic ternary forms with an aligned triple of eigenpoints}
\label{locus_one_alignment}

The results of the previous sections allow one to determine the dimension and the degree of the locus of cubics having
at least one aligned triple of eigenpoints.

\begin{definition}
\label{definition:locus_L}
Let $\sU \subset \p^9$ be the following locus:
\[
  \sU:= \{[f]\in \p^9 \setminus \Delta_{3,3} \ | \ \Eig{f} \ \textrm{contains \ an \ aligned \ triple}\}, \,
\]
where $\Delta_{3,3}$ denotes the eigendiscriminant of \Cref{definition:eigendiscriminant},
and define $\sL \subseteq \p^9$ as the closure of~$\sU$:
\[
  \sL := \overline \sU \subset \p^9 \,.
\]
\end{definition}

\begin{theorem}
\label{theorem:irreducible}
The variety~$\sL$ is an irreducible hypersurface.
\end{theorem}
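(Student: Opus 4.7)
The plan is to exhibit $\sL$ as the image in $\p^9$ of an irreducible $8$-dimensional incidence variety under a generically finite map, whence irreducibility and codimension one follow at once.

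First I would set up the incidence variety. Let $T \subset (\p^2)^3$ be the locus of pairwise distinct collinear triples $(P_1,P_2,P_3)$; projecting onto the dual plane $(\p^2)^{\vee}$ of lines via $(P_1,P_2,P_3)\mapsto P_1\vee P_2$ realizes $T$ as an open subset of a $(\p^1)^{\times 3}$-bundle, so $T$ is irreducible of dimension $5$. Inside $T \times \p^9$ consider
\[
  \mathcal{I} := \bigl\{ \bigl((P_1,P_2,P_3),[f]\bigr) \,\bigm|\, \Phi(P_1,P_2,P_3)\cdot w_f = 0 \bigr\} \,,
\]
and its open subvariety $\mathcal{I}^{\circ}$ lying over the open locus $T^{\circ} \subset T$ of triples whose supporting line is not tangent to $\iso$ at any of the three points. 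By \Cref{proposition:three_distinct_ranks}, on $T^{\circ}$ the matrix $\Phi(P_1,P_2,P_3)$ has constant rank~$6$, so the first projection realizes $\mathcal{I}^{\circ} \to T^{\circ}$ as a $\p^3$-bundle. Therefore $\mathcal{I}^{\circ}$, and hence its closure $\overline{\mathcal{I}^{\circ}}$ in $(\p^2)^3 \times \p^9$, is irreducible of dimension $5+3=8$.

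I would then analyze the second projection $\pi_2\colon \overline{\mathcal{I}^{\circ}} \to \p^9$. Its image is a closed irreducible subvariety that contains a dense open subset of~$\sU$: the condition on~$T$ that the supporting line be tangent to~$\iso$ at one of the marked points is a proper closed subvariety of~$T$, so it cannot dominate $\sL$. Hence $\pi_2(\overline{\mathcal{I}^{\circ}}) = \sL$, which proves irreducibility of $\sL$. To conclude that $\sL$ is a hypersurface, I would check that $\pi_2$ is generically finite: for any $[f] \in \sU$ the eigenscheme $\Eig{f}$ is $0$-dimensional reduced of degree~$7$ by \Cref{theorem:nonempty}, so the fiber $\pi_2^{-1}([f])$ parametrizes the finitely many ordered collinear triples of eigenpoints of~$f$. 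Combined with $\sL \neq \p^9$, which is guaranteed by \cite[Theorem~5.7]{BGV} recalled in the introduction, this yields $\dim \sL = \dim \overline{\mathcal{I}^{\circ}} = 8$ and the statement follows.

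The one non-automatic point is the claim that $\pi_2(\overline{\mathcal{I}^{\circ}}) = \sL$, i.e., that a general $[f] \in \sL$ admits some aligned triple of eigenpoints whose supporting line is not tangent to~$\iso$. This is a dimension comparison: the sublocus of $T$ of triples on a tangent line to~$\iso$ has dimension~$4 < 5$ (one parameter for the tangency point, three for the positions on the line), and the corresponding image in~$\p^9$ under the analogous construction has strictly smaller dimension than~$8$; thus it cannot exhaust the irreducible variety $\sL$.
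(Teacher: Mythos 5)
Your construction of the incidence variety $\mathcal{I}$ over ordered aligned triples, the $\p^3$-bundle structure of $\mathcal{I}^{\circ}$ over $T^{\circ}$ via \Cref{proposition:three_distinct_ranks}, and the generic finiteness of $\pi_2$ are all sound, and they reproduce (in ordered rather than symmetric form) the paper's dimension count $\dim \sL = 8$. The gap lies in the identification $\pi_2(\overline{\mathcal{I}^{\circ}}) = \sL$. The inclusion $\pi_2(\overline{\mathcal{I}^{\circ}}) \subseteq \sL$ silently uses that $\pi_2(\mathcal{I}^{\circ}) \not\subseteq \Delta_{3,3}$ (easy, but worth stating). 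The reverse inclusion is the real problem: from $\sU \subseteq \pi_2(\mathcal{I})$ you only get $\sL \subseteq \pi_2(\overline{\mathcal{I}^{\circ}}) \cup Y$, where $Y$ is the closure of the image of $\mathcal{I}$ over $T \setminus T^{\circ}$. Over that locus (tangency of the supporting line at one of the marked points, a $3$-dimensional family) the matrix of conditions has rank~$5$, so the fibres are $\p^4$'s and $\dim Y \le 7$. Your reason for discarding $Y$ --- ``it cannot exhaust the irreducible variety $\sL$'' --- presupposes exactly the irreducibility you are proving. What has to be excluded is an irreducible component of $\sL$ of dimension $\le 7$ whose general member is a regular cubic \emph{all} of whose aligned eigenpoint triples lie on tangent lines to $\iso$ touching at an eigenpoint; a dimension comparison alone cannot rule this out, since a component of $\overline{\sU}$ need not have dimension~$8$.

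To close the gap you would need one of two additional facts: either that every $[f] \in \sU$ possesses at least one aligned triple belonging to $T^{\circ}$, or that over each bad triple the fibre of $\overline{\mathcal{I}^{\circ}}$ fills the whole $\p^4 = \Lambda\bigl(\Phi(P_1,P_2,P_3)\bigr)$ (for instance because the limiting $\p^3$'s vary with the direction of approach along the codimension-$2$ bad locus). Neither is automatic, and this is precisely where the paper invests its effort: it covers $\sU$ by the images $W_I$ of explicit local parametrizations $\alpha_I$ built from the column relations verified in \nb{05}{F1}, checks that $W_I \supseteq \sU \cap \{a_9 \neq 0\}$ (and analogously on the other coordinate charts), and concludes with the standard fact that a space covered by pairwise-intersecting irreducible open subsets is irreducible. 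Your approach is more streamlined and would be preferable if the covering claim $\sL \subseteq \pi_2(\overline{\mathcal{I}^{\circ}})$ were established, but as written that step is missing.
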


\begin{proof}
We observe that, as there exist
polynomials $f$ such that $\Eig{f}$ has no aligned triple, we have $\dim \sL \le 8$.

Consider the symmetric product $(\p^2)^{(3)}$ and set $\mathcal{AL} \subset (\p^2)^{(3)}$ to be the locus of unordered triples of distinct aligned points. Observe that $\mathcal{AL}$ is irreducible of dimension $5$ as its closure is the hypersurface given by the vanishing of the determinant of the order~$3$ matrix of the coordinates of three general points. Alternatively, the closure of the locus $\mathcal {AL}$ can be seen as the symmetric quotient of the projective line bundle $\sF \subset \p^2 \times \p^2 \times \p^2$ parametrized by the triples $(P_1, P_2, u_1 P_1 +u_2P_2)$, where $(u_1:u_2) \in \p^1$.

If we set
\[
  \sU':= \{[f]\in \p^9 \setminus \Delta_{3,3} \ | \ \Eig{f} \ \text{contains exactly $1$ aligned triple}\} \,,
\]
we have a surjective morphism
\[
  \alpha \colon \mathcal{U}' \to \mathcal{AL} \,,
\]
assigning to each $[f] \in \mathcal{U}'$ the unique triple of eigenpoints.
By \Cref{proposition:three_distinct_ranks}, the fibers of~$\alpha$ are projective linear systems of dimension~$3$ over the irreducible open subset
\[
  \mathcal{W} := \mathcal{AL}
  \setminus \{(P_1,P_2,P_3) \in \mathcal{AL}
  \ | \ \sigma(P_1,P_2)=0, s_{11} s_{22} s_{33}=0\}
\]
of aligned triples not lying
on a tangent line to the isotropic conic~$\iso$ and with tangency point one of the~$P_i$'s.
By the Fiber Dimension Theorem, this implies that the open subset $\alpha^{-1} (\mathcal{W}) \subset \sU$ is irreducible of dimension~$8$, so $\dim \sL =8$.

To prove the irreducibility, we observe that since the eigenpoint condition corresponds to two linear conditions on the coefficients of cubic polynomials, by linear algebra methods we can determine a local parametrization of~$\sL$.
Specifically, given a general aligned triple $(P_1, P_2, u_1 P_1 +u_2P_2)$, the matrix of conditions $M=\Phi(P_1, P_2, u_1 P_1 +u_2P_2)$ has rank $6$, so the associated linear system
$\Lambda \bigl( \Phi(P_1, P_2, u_1 \, P_1 + u_2 \, P_2) \bigr)$ has dimension~$3$. We can then express $6$ coefficients of the general polynomial
$[f]\in \Lambda \bigl( \Phi(P_1, P_2, u_1 \, P_1 + u_2 \, P_2) \bigr)$ as rational functions of $P_1,P_2,u_1,u_2$ and of the remaining $4$ coefficients. The free coefficients depend on the position of a non zero minor of order six.
We claim that the last $3$ columns of the matrix
$M$ are always in the linear span of the first $7$ columns.

To prove the claim, set $c_0, \dots, c_9$ to be the $10$ columns of~$M$.
Moreover,
if the three components of~$P_1 \times P_2$ are called $\alpha, \beta, \gamma$, we set:
\[
  N_1 = \left(
  \begin{array}{ccc}
    \alpha & 0 & 0 \\
    0 & \beta & 0\\
    0 & 0 & \gamma
  \end{array}
  \right), \quad
  N_2 = \left(
  \begin{array}{ccc}
    0 & \alpha & 0 \\
    \gamma & 0 & 0\\
    0 & 0 & \beta
  \end{array}
  \right).
\]
The six
columns $c_0, c_1, c_2, c_4, c_5, c_7$ of~$M$ are linearly dependent. Indeed, if $L_1$ is the $9\times 3$ matrix whose columns are
$c_0, c_2, c_7$ and $L_2$ is given by the columns $c_1, c_4, c_5$,
then we have:
\begin{equation}
  (L_1 N_1 + 2 L_2N_2) (P_1 \times P_2) = 0 \,;
  \label{eq:lin_comb}
\end{equation}
the linear combination of $c_0, c_1, c_2, c_4, c_5, c_7$ which is zero
is obtained by expanding this expression. This computation is contained in \nb{05}{F1}.

Similarly, the columns $c_1, c_2, c_3, c_5, c_6, c_8$ are linearly dependent
and~\eqref{eq:lin_comb} holds if $L_1$ in this case is $[c_1, c_3, c_8]$ and
$L_2$ is $[c_2, c_5, c_6]$. Finally, the columns
$c_4, c_5, c_6, c_7, c_8, c_9$ are linearly dependent and~\eqref{eq:lin_comb}
holds if we take $L_1 = [c_4, c_6, c_9]$ and $L_2 = [c_5, c_7, c_8]$.

As a consequence, local parametrizations of~$\sL$ can be given by considering the following open subsets:
for any multiindex
\[
  I = \{i_1, \dots, i_6\} \subset \{0, 1, \dots, 6\} \,,
\]
we set
\begin{multline*}
  \sV_I :=
  \bigl\{
    [(P_1, P_2, u_1 P_1 +u_2P_2)] \in \mathcal{AL} \ | \ \text{the\ columns\ of\ } M \
    \text{relative \ to} \ I\ \text{are\ independent}
  \bigr\} \,.
\end{multline*}
Then for any $[(P_1, P_2, u_1 \, P_1 + u_2 \, P_2)] \in \sV_I$, by expressing any element
$[f] =[\mathcal{B} \cdot w_f]\in \Lambda \bigl( M \bigr)$ with
$ w_f = (b_0,
  b_1,
  b_2,
  b_3,
  b_4,
  b_5, b_6,b_7,b_8,b_9)$,
if $i\in I$ the coefficients $b_i$ depend on the free parameters $b_i=b_i(P_1,P_2,u_1,u_2,b_j,b_7,b_8,b_9)$, where $j \not \in I$, $0\le j \le 6$. Therefore, we can interpret $w_f=w_f(P_1,P_2,u_1,u_2,b_j,b_7,b_8,b_9)$ as a suitable rational function.
This leads to the definition of the rational map
\[
  \alpha_I \colon \sV_I \times \p^3 \to \sL, \quad
  \alpha_I (P_1,P_2,u_1,u_2,b_j,b_7,b_8,b_9)=
  [\mathcal{B} \cdot w_f(P_1,P_2,u_1,u_2,b_j,b_7,b_8,b_9)] \,.
\]
Next we observe that for any $I \subset \{0,\dots, 6\}$, the image
\[
  W_I := \alpha_I (\sV_I \times \p^3)
\]
is irreducible, being image of an irreducible variety, and it
contains an open subset. Indeed, assume for simplicity (the other cases are similar) that
$I=\{0,1,2,3,4,5\}$; by suitably adapting the argument of \Cref{lemma:construct_cubic}, we see that an element $[f]\in W_I$ is represented by the determinant of the
matrix given by the rows
$0,1,3,4,6,7$ of $M$ and the additional $4$ rows
\[
  \left(
  \begin{array}{cccccc}
    0 & \cdots & 1&0&0&-b_6 \\
    0 & \cdots & 0&1&0&-b_7 \\
    0 & \cdots & 0&0&1&-b_8 \\
    & & & \mathcal{B} & & \\
  \end{array}
  \right) \,.
\]
In particular, the coefficient $b_9$ of~$z^3$ is equal to the determinant of the order~$6$ minor of $M$ relative to the rows $0,1,3,4,6,7$ and the first $6$ columns. It follows that $W_I \supset \sU \cap \{ a_9 \neq 0\}$.

Finally, we claim that the irreducible subvarieties $W_I$ for $I\subset \{0,\dots, 6\}$ have a common non-empty intersection
including internal points. Indeed, as one can verify with an explicit symbolic computation, an example of a polynomial
class~$[g]$ with an aligned triple of eigenpoints and satisfying
\[
  [g] \in \bigcap_I W_I \setminus \Delta_{3,3}
\]
is given by a random choice of~$P_1$, $P_2$ and $(u_1:u_2)$ and $b_7,b_8,b_9$.

Hence $\sU$ is covered by irreducible open subsets, each intersecting every other, so it is irreducible, and the same holds for its closure $\sL$.
\end{proof}

Next we want to determine the degree of the hypersurface $\sL$. We shall need the following.

\begin{prop}
The locus $\sV \subset \sL \subset \p^9$ of cubic forms with two or more aligned triples of eigenpoints has dimension
$\dim \sV = 7$.
\end{prop}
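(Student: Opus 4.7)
The plan is to establish both $\dim \sV \geq 7$ and $\dim \sV \leq 7$. Since $\sV$ is the locus inside the irreducible $8$-dimensional hypersurface $\sL$ where a second aligned triple appears, and by \Cref{lemma:no_4_aligned} any two aligned triples in a regular eigenscheme must share a point (i.e.\ form a $V$- configuration), the natural approach is to use the $V$- configuration parameter spaces described in \Cref{V-configurations}.

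For the lower bound, I would exhibit an explicit $7$-dimensional family inside $\sV$ using construction (a) from the beginning of \Cref{V-configurations}. Parametrize $5$-tuples $(P_1,P_2,P_3,P_4,P_5)$ in $V$- configuration satisfying $\delta_1(P_1,P_2,P_4)=0$: choose $P_1,P_2 \in \p^2$ freely ($4$ parameters), then $P_4 \in \p^2$ on the codimension-$1$ locus cut out by $\delta_1=0$ ($1$ parameter), then $P_3$ on $P_1 \vee P_2$ and $P_5$ on $P_1 \vee P_4$ ($2$ parameters). By \Cref{proposition:d1d2} we have $\rk \Phi(P_1,\dots,P_5) \leq 9$, and by \Cref{theorem:rank_V} the rank is generically equal to $9$, so on a dense open subset of the parameter space the associated linear system $\Lambda(\Phi(P_1,\dots,P_5))$ is a single point, yielding a unique cubic $[f]$ having $P_1,\dots,P_5$ as eigenpoints. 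By definition of $V$- configuration, $[f]$ carries two aligned triples, so $[f] \in \sV$. The resulting map from the parameter space to $\sV$ has finite fibers, since a cubic has only $7$ eigenpoints and thus finitely many $V$-subconfigurations; hence $\dim \sV \geq 7$.

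For the upper bound, since $\sL$ is irreducible of dimension $8$ by \Cref{theorem:irreducible} and $\sV \subseteq \sL$ is closed, it suffices to show that $\sV$ is a proper subvariety, i.e.\ to exhibit a cubic in $\sL$ whose eigenscheme contains exactly one aligned triple. For this I would revisit the morphism $\alpha : \sU' \to \mathcal{AL}$ from the proof of \Cref{theorem:irreducible}: the general fiber is a $3$-dimensional linear system of cubics, all having a fixed aligned triple $(P_1,P_2,P_3)$ as eigenpoints. If $\sV$ equalled $\sL$, every cubic in this pencil would produce, among its four remaining eigenpoints, a second aligned triple sharing a point with $(P_1,P_2,P_3)$ (by \Cref{lemma:no_4_aligned}); this is clearly a non-generic condition on the pencil, and would be contradicted by a single explicit numerical example computed in a symbolic algebra system, in the spirit of the notebooks already cited.

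The main obstacle is the upper bound: the dimension count for the lower bound is essentially mechanical, but establishing that a generic cubic in $\sL$ has only one aligned triple among its eigenpoints is a genericity statement that requires either a careful Zariski-openness argument or, more practically, an explicit symbolic verification exhibiting one such cubic. Given the style of the paper, the latter route is the natural one to take.
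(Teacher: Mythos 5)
Your proposal is correct, and its first half coincides with the paper's argument: the paper also parametrizes $V$- configurations (via \Cref{lemma:no_4_aligned}), computes that the space $\sW$ of all $V$- configurations has dimension~$8$, and uses \Cref{theorem:rank_V} to see that the eigenpoint condition $\delta_1\cdot\delta_2=0$ cuts out a divisor $\sE\sW$ of dimension~$7$; you work only with the $\delta_1=0$ branch, which suffices for the lower bound. Where you genuinely diverge is the upper bound. The paper gets it from the \emph{same} parametrization: since by \Cref{lemma:no_4_aligned} every regular cubic with two aligned triples carries a $V$- configuration of eigenpoints, the generically finite map $\sE\sW \dashrightarrow \p^9$ dominates $\sV$, so $\dim\sV \le \dim\sE\sW = 7$; this also exhibits $\sV$ as covered by the two branches $\delta_1=0$ and $\delta_2=0$. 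You instead invoke the irreducibility of the $8$-dimensional hypersurface $\sL$ from \Cref{theorem:irreducible} and reduce the upper bound to showing $\sV\subsetneq\sL$, i.e.\ to producing one regular cubic with exactly one aligned triple. That route is legitimate and arguably cheaper given that \Cref{theorem:irreducible} is already available (its proof essentially shows $\sU'\ne\emptyset$ and $8$-dimensional), but it carries two small obligations you only gesture at: the semicontinuity argument that ``$\Eig{f}$ contains two or more aligned triples'' is a closed condition on $\sU$ (so that $\sU'$ is open and, being nonempty in the irreducible $\sL$, dense), and the verification that the cubics in your $7$-dimensional family are generically regular so that they genuinely lie in $\sV$ (the paper's construction (a) in \Cref{V-configurations} records exactly this). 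The paper's version avoids both issues and yields slightly more structural information about $\sV$; yours trades that for a shorter deduction from results already established.
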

\begin{proof}
In $(\p^2)^5$, we consider the locus $\sW$ of $V$- configurations $(P_1,P_2,P_3,P_4,P_5)$. By a dimension count it is $\dim \sW=8$.
By \Cref{theorem:rank_V}, the five points are eigenpoints if and only if $\delta_1 (P_1,P_2,P_4) \cdot
\delta_2 (P_1,P_2,P_3,P_4,P_5)=0$. The two equations define two divisors in $\sW$, so the locus $\sE\sW$ of $V$- configurations constituted by eigenpoints has dimension $7$.

Finally, there is an open subset of $\sE\sW$ where the rank of the condition matrix is~$9$, hence $\sE\sW$ is birational to~$\sV$.
\end{proof}

As a consequence, we have the following result.

\begin{corollary}
If $[f],[g] \in \p^9 \setminus \Delta_{3,3}$ are general cubics,
then the general cubic in the pencil
$\lambda f + \mu g$ for $(\lambda: \mu) \in \p^1$ has no aligned triples of eigenpoints, and there is a finite number of cubics having exactly one aligned triple.
\end{corollary}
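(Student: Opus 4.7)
My plan is to prove the corollary by elementary dimension counting in $\p^9$, using the two results just established: the irreducibility of $\sL$ as a hypersurface (\Cref{theorem:irreducible}) and the bound $\dim \sV = 7$ from the preceding proposition. For general $[f], [g] \in \p^9 \setminus \Delta_{3,3}$, the pencil $\{\lambda f + \mu g \colon (\lambda : \mu) \in \p^1\}$ traces a general line $\ell \subset \p^9$, since $\p^9 \setminus \Delta_{3,3}$ is open and dense. As $\sL$ is a proper closed subvariety of $\p^9$, the general point of $\ell$ lies outside $\sL$; by the definition of $\sU$, this immediately yields the first assertion that the general cubic in the pencil has no aligned triple of eigenpoints.

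For the second assertion, by B\'ezout the intersection $\ell \cap \sL$ consists of exactly $\deg(\sL)$ points counted with multiplicity, hence a finite set. It suffices to show that, for a general line $\ell$, each such point lies in $\sU \setminus \sV$, i.e., represents a regular cubic with \emph{exactly} one aligned triple of eigenpoints. Two auxiliary loci must be avoided. First, $\sV$ has codimension $9 - 7 = 2$ in $\p^9$, so $\dim \sV + \dim \ell < 9$ and a general line avoids $\sV$. Second, on the regular locus $\p^9 \setminus \Delta_{3,3}$ the eigenscheme of a cubic varies algebraically as an unordered tuple of $7$ points, so the condition ``some triple of eigenpoints is aligned'' cuts out a closed subset; hence $\sU$ is closed in $\p^9 \setminus \Delta_{3,3}$, so $\sL \cap (\p^9 \setminus \Delta_{3,3}) = \sU$ and $\sL \setminus \sU \subset \sL \cap \Delta_{3,3}$. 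Since $\sL$ and $\Delta_{3,3}$ are distinct irreducible hypersurfaces (for instance, the Fermat cubic $x^3+y^3+z^3$ lies in $\sU$, hence in $\sL \setminus \Delta_{3,3}$), the intersection $\sL \cap \Delta_{3,3}$ has dimension at most $7$, and a general line avoids it as well.

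Putting these observations together, for a generic pencil every point of $\ell \cap \sL$ lies in $\sU \setminus \sV$ and so represents a regular cubic whose eigenscheme contains exactly one aligned triple, and there are only finitely many such points. The one mildly subtle step is the inclusion $\sL \setminus \sU \subset \Delta_{3,3}$, which rests on the algebraicity and closedness of the alignment condition over the regular locus; granted that, the rest is a routine application of the fiber-dimension theorem and B\'ezout.
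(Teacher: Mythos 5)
Your argument is correct and is essentially the dimension count the paper intends: $\sL$ is an irreducible hypersurface, so a general line meets it in finitely many points and its general point avoids it, while $\sV$ and $\sL\cap\Delta_{3,3}$ have dimension at most $7$ and are therefore missed by a general line. The paper states the corollary without proof as an immediate consequence of \Cref{theorem:irreducible} and the proposition $\dim\sV=7$; your extra verification that $\sL\setminus\sU\subset\Delta_{3,3}$ (via properness of the alignment condition over the regular locus) is a sound way of making explicit a point the paper leaves implicit.
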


If $g_1, g_2, g_3$ are the three minors of \Cref{eq:def_matrix} relative
to a cubic form~$f$, we denote by
\[
  \Sigma_f := \p \bigl( \left\langle g_1, g_2, g_3 \right\rangle \bigr),
\]
the net of cubics, whose base locus is the eigenscheme~$\Eig{f}$.

\begin{lemma}
\label{lemma:scroll}
If $f$ and $g$ are general cubics, then
\[
  \mathcal{N} := \bigcup_{(\lambda : \mu) \in \p^1} \Sigma_{\lambda f + \mu g} \subset \p^9
\]
is an embedding of a rational projective bundle and has degree~$3$.
\end{lemma}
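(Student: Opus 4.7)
The key observation underlying the whole argument is \emph{linearity in the coefficients of~$f$}: each minor $g_i$ of the matrix in \Cref{eq:def_matrix} is a linear function of the coefficients of~$f$, so if we denote by $g_i^h$ the minor computed from~$h$, then $g_i^{\lambda f + \mu g} = \lambda g_i^f + \mu g_i^g$. Consequently the net $\Sigma_{\lambda f + \mu g}$ is spanned by $\lambda g_i^f + \mu g_i^g$ for $i=1,2,3$, and the whole family fits into a single sheaf map
\[
  \psi \colon \mathcal{O}_{\p^1}(-1)^{\oplus 3} \longrightarrow \mathcal{O}_{\p^1} \otimes \C[x,y,z]_3 \cong \mathcal{O}_{\p^1}^{\oplus 10},
\]
sending the $i$-th generator to $\lambda g_i^f + \mu g_i^g$. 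For a general pair $(f,g)$, the pencil $\lambda f + \mu g$ is disjoint from the eigendiscriminant~$\Delta_{3,3}$ (\Cref{definition:eigendiscriminant}), so for every $(\lambda:\mu)$ the eigenscheme is a reduced $0$-dimensional complete intersection defined exactly by the three minors. Hence the three generators are linearly independent in every fiber, and $\mathcal{V} := \operatorname{Im}(\psi)$ is a rank-$3$ subbundle of $\mathcal{O}_{\p^1}^{\oplus 10}$ isomorphic to $\mathcal{O}_{\p^1}(-1)^{\oplus 3}$.

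The inclusion $\mathcal{V} \hookrightarrow \mathcal{O}_{\p^1}^{\oplus 10}$ then yields a fiberwise linear morphism $\Psi \colon \p(\mathcal{V}) \to \p^9$, whose image is exactly~$\mathcal{N}$. To prove $\Psi$ is a closed embedding, the main step is to show that two distinct fibers of $\p(\mathcal{V}) \to \p^1$ do not meet in~$\p^9$: up to a linear change of basis of the pencil, this reduces to the statement $\Sigma_f \cap \Sigma_g = \varnothing$, equivalently that the six polynomials $g_1^f, g_2^f, g_3^f, g_1^g, g_2^g, g_3^g$ are linearly independent in $\C[x,y,z]_3 \cong \C^{10}$. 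This is an open condition on $(f,g)$ and can be verified for general $(f,g)$ by an explicit example. Combined with the linear embedding of each fiber $\p(V(\lambda,\mu)) \hookrightarrow \p^9$, this shows $\Psi$ is a closed embedding, so $\mathcal{N}$ is an embedded $\p^2$-bundle over~$\p^1$.

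To compute the degree of~$\mathcal{N}$, intersect with a general codimension-$3$ linear subspace. Three general hyperplanes of~$\p^9$ correspond to linear forms $\ell_1, \ell_2, \ell_3$ on $\C[x,y,z]_3$; their restrictions to $V(\lambda,\mu)$, written with respect to the basis $(\lambda g_j^f + \mu g_j^g)_{j=1,2,3}$, assemble into the $3 \times 3$ matrix
\[
  M(\lambda,\mu) = \bigl( \lambda\, \ell_i(g_j^f) + \mu\, \ell_i(g_j^g) \bigr)_{i,j=1,2,3}.
\]
A common zero in $\p(V(\lambda,\mu))$ exists exactly when $\det M(\lambda,\mu) = 0$, which is a homogeneous polynomial of degree~$3$ in $(\lambda : \mu)$. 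For a generic choice of the $\ell_i$ this determinant has three simple roots, each giving exactly one point of $\mathcal{N} \cap H_1 \cap H_2 \cap H_3$, so $\deg \mathcal{N} = 3$.

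The main obstacle is the closed-embedding claim: fiberwise injectivity of~$\Psi$ is immediate from the construction, but injectivity across fibers (and the analogous check on tangent vectors) both rest on the linear independence of the six minors $g_i^f, g_i^g$, which is an open generic condition on $(f,g)$. Once this is established by exhibiting a single explicit pair, the remainder of the argument is formal.
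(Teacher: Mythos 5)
Your overall strategy --- realizing $\mathcal{N}$ as the image of a $\p^2$-bundle over $\p^1$ via the linearity of the minors in the coefficients of the cubic, reducing the embedding property to the linear independence of the six forms $g_i^f,g_i^g$ (an open condition checked on one example), and then computing the degree --- is close in spirit to the paper's proof. The one place you genuinely diverge is the degree computation: you intersect with three general hyperplanes and observe that $\det M(\lambda,\mu)$ is a degree-$3$ form in $(\lambda:\mu)$, whereas the paper notes that $\mathcal{N}$ spans only a $\p^5$ and is therefore a threefold of minimal degree, so $\deg\mathcal{N}=5+1-3=3$ by the classical classification. Both computations are valid; yours is more self-contained, the paper's explains \emph{why} the answer is the codimension plus one.

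There is, however, a genuine gap in the step establishing that every fiber of the family is an honest net (isomorphic to $\p^2$), i.e.\ that $\mathcal{V}$ has rank $3$ everywhere. You justify this by asserting that for general $(f,g)$ the pencil $\lambda f+\mu g$ is disjoint from the eigendiscriminant $\Delta_{3,3}$. This is false: $\Delta_{3,3}$ is a hypersurface in $\p^9$ (\Cref{definition:eigendiscriminant}), so \emph{every} pencil meets it, and a general pencil contains $\deg\Delta_{3,3}$ members with non-regular eigenscheme. (A smaller point: even where the eigenscheme is regular it has degree $7$ and is not a complete intersection of the three minors, and ``defined by the three minors'' does not by itself imply their linear independence.) The statement you actually need is that $g_1^h,g_2^h,g_3^h$ stay linearly independent for \emph{all} $h$ in the pencil, and this must be argued differently: the minors are linearly dependent if and only if the partial derivatives of $h$ are, i.e.\ if and only if $V(h)$ is a union of concurrent lines --- a locus of dimension $5$, hence codimension $4$ in $\p^9$, which a general line does avoid. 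With that substitution the rest of your argument goes through.
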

\begin{proof}
Consider the projective bundle given by the family of planes
\[
  \mathcal{P} := \{ \Sigma_{\lambda f + \mu g} \, : \, (\lambda: \mu) \in \p^1 \} \subset \p^1 \times \p^9
\]
Observe that we can assume that $\Sigma_{\lambda f + \mu g} \cong \p^2$
for every $(\lambda:\mu) \in \p^1$. Indeed, it fails to be a net if and only if $g_1,g_2,g_3$ are linearly dependent, and this happens if and only if the three partials are linearly dependent. The latter condition is satisfied if and only if $V(\lambda f + \mu g)$ is a set of concurrent lines. In this case $[\lambda f + \mu g]$ belongs to a $5$ - dimensional locus.

Then $\mathcal{N}$ is the projection of~$\mathcal{P}$ on the second factor.
However, the map $\mathcal{P} \to \mathcal{N}$ contracts no subvariety of any plane of~$\mathcal{P}$, so either it is an embedding or it contracts some horizontal curve. In the latter case, all the planes of the family should intersect in at least one point. In particular, the two nets $\Sigma_f$ and~$\Sigma_g$ should have non-empty intersection.
If we denote by~$g_1$, $g_2$ and~$g_3$ the $2 \times 2$ minors relative to~$f$, and by~$h_1$, $h_2$ and~$h_3$ the ones relative to~$g$, the vectorial dimension of the linear span $\left\langle g_1, g_2, g_3, h_1, h_2, h_3 \right\rangle$ should be strictly less than $6$. This can be avoided, since such a condition corresponds to a proper closed subscheme of~$\p^9 \times \p^9$.

It follows that if $f$ and~$g$ are general enough, then $\mathcal{N}$ is a $3$-dimensional rational normal scroll in
\[
  \mathcal{N} \subset \p(\left\langle g_1, g_2, g_3, h_1, h_2, h_3 \right\rangle) \cong \p^5.
\]
Being a variety of minimal degree, its degree is $5+1-3 = 3$ by the classical result of \cite{EH}.
\end{proof}

\begin{theorem}
The degree of~$\sL$ is equal to
\[
  \deg \ \sL = 15 \,.
\]
\end{theorem}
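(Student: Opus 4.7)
The plan is to intersect $\sL$ with a general pencil using the scroll $\mathcal{N}$ from \Cref{lemma:scroll} and the variety $R \subset \p^9$ of reducible cubics (of codimension $2$ and degree $21$, obtained as the image of the multiplication map $\p^{2\vee} \times \p^5 \to \p^9$, whose class $(L+H)^7$ evaluates to $\binom{7}{2}=21$).

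First I would observe that $[f_t] \in \sL$ if and only if the net $\Sigma_{f_t}$ contains a pencil of reducible cubics $\{\ell \cdot C\}$, with $\ell$ the line through the three aligned eigenpoints and $C$ varying among conics through the remaining four; whereas for generic $t$ the net $\Sigma_{f_t}$ contains exactly the $\binom{7}{2}=21$ isolated reducibles $\ell_{PQ}\cdot C_{PQ}$, one per pair of eigenpoints. By Bezout, $\mathcal{N}\cap R$ is therefore a $1$-cycle of total degree $3\cdot 21=63$, and it decomposes as $Z_0\cup \bigsqcup_{i=1}^{n} Z_1^{(i)}$: the horizontal component $Z_0$ is the closure of the $21$ isolated reducibles in each generic fiber of $\mathcal{N}\to\p^1$, while each $Z_1^{(i)}$ is the pencil of reducibles contained in $\Sigma_{f_{t_i}}$ for the $n=\deg\sL$ values with $[f_{t_i}]\in\sL$. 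Since each $Z_1^{(i)}$ is a line inside a $\p^2$-fiber of $\mathcal{N}$, hence of degree $1$ in $\p^9$, this gives $\deg Z_0+n=63$.

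To compute $\deg Z_0$ independently I would parametrize $Z_0$ birationally by the relative symmetric square $\mathcal{X}^{(2)}$ of the universal eigenscheme $\mathcal{X}\to\p^1$ of the pencil, via $(t,\{P,Q\})\mapsto [\ell_{PQ}\cdot C_{PQ}]$. A Thom--Porteous computation on $\p^1\times\p^2$ applied to the defining $2\times 3$ matrix of the eigenscheme gives $[\mathcal{X}]=7h_P^2+5h_1h_P$ and its Hilbert--Burch resolution yields $\chi(\mathcal{O}_\mathcal{X})=1$; so $\mathcal{X}$ is (generically) a smooth rational curve of degree $7$ over $\p^1$. Since $\phi\colon\mathcal{X}^{(2)}\to\p^9$ factors through $\p^{2\vee}\times\p^5$ by $(\phi_1,\phi_2)$ followed by multiplication, $\phi^*H$ splits as $\phi_1^*H+\phi_2^*H$. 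For a fixed general $R\in\p^2$, $\deg\phi_1^*H$ counts pairs $(t,\{P,Q\})$ with $R\in\ell_{PQ}$, namely the double points of $(\pi_1,\pi_R\circ\pi_2)\colon\mathcal{X}\to\p^1\times\p^1$; its image has bidegree $(5,7)$, so by adjunction the number of nodes equals $p_a-g(\mathcal{X})=(5-1)(7-1)-0=24$. An analogous argument using the second Veronese $v_2\colon\p^2\to\p^5$ to encode the condition ``$R\in C_{PQ}$'' as a coplanarity in $\p^5$ yields $\deg\phi_2^*H=24$. Hence $\deg Z_0=48$ and $\deg\sL=63-48=15$.

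The hardest step will be verifying $\deg\phi_2^*H=24$: the ``conic through the other five eigenpoints'' condition is not a literal projection from $R$, so it must be translated via $v_2$ into a linear-dependence statement in $\p^5$ and handled by a secant / Pl\"ucker-type argument on the Veronese-embedded image of $\mathcal{X}$, so as to match the cleaner double-point count obtained for $\phi_1$.
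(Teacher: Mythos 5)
Your overall strategy is different from the paper's (the paper intersects the scroll $\mathcal{N}$ with the variety of \emph{triangles}, which has degree $15$ and codimension $3$, so that $\mathcal{T}\cdot\mathcal{N}=45$ is a zero-dimensional count with no horizontal component to subtract -- a net whose seven base points are in general position contains no totally reducible member at all), and your variant via the degree-$21$ variety $R$ of all reducible cubics is in principle workable, but it contains a concrete numerical error that invalidates the computation of $\deg Z_0$. The universal eigenscheme $\mathcal{X}\subset\p^1\times\p^2$ of a general pencil is \emph{not} rational: it is isomorphic to the plane quintic swept out by the eigenpoints of the pencil (the three order-$2$ minors of the $2\times 3$ matrix built from the two rows $(g_1,g_2,g_3)$ and $(h_1,h_2,h_3)$ are, by the syzygy \Cref{eq:syzygy}, a common quintic factor times $(z,-y,x)$, so the degeneracy locus in $\p^2$ is a smooth quintic for a general pencil and $\mathcal{X}\to\p^2$ is birational onto it). Redoing your Koszul computation for the zero locus of a section of the rank-$2$ bundle $\mathcal{O}(h_1)\otimes q^*K$ on $\p^1\times\p^2$ gives
\[
\chi(\mathcal{O}_{\mathcal{X}})=1-\chi(E^\vee)+\chi(\det E^\vee)=1-0-6=-5\,,
\]
i.e.\ $g(\mathcal{X})=6$, consistent with a smooth plane quintic, not $\chi(\mathcal{O}_{\mathcal{X}})=1$.

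This error propagates: your double-point count for $\phi_1$ becomes $p_a(\Gamma)-g(\mathcal{X})=(5-1)(7-1)-6=18$, not $24$, so the two summands of $\deg Z_0$ cannot both equal $24$; to reach the (correct) total $\deg Z_0=48$ one needs $\deg\phi_2^*H=30$, which is genuinely \emph{not} analogous to the $\phi_1$ computation and is precisely the step you flag as unverified. As written, the proposal arrives at $48=24+24$ through two compensating mistakes, and the final answer $15$ is not actually established. Two further points would also need attention even after fixing the genus: (i) the identity $\deg Z_0+\sum_i\deg Z_1^{(i)}=63$ requires all components of the one-dimensional cycle $\mathcal{N}\cap R$ to appear with multiplicity one (a transversality statement for a positive-dimensional intersection, more delicate than the paper's zero-dimensional one), and (ii) $\deg Z_0=\deg\phi_1^*H+\deg\phi_2^*H$ requires $\phi$ to be birational onto $Z_0$. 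I recommend either completing the $\phi_2$ count honestly (target value $30$) or switching to the paper's triangle count, which avoids the horizontal component entirely.
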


\begin{proof}
We start by observing that a reduced $0$-dimensional eigenscheme contains an aligned triple if and only if the net of cubics
$\Sigma_f$ contains a cubic which splits in three lines, a so called \emph{triangle}. Moreover, if $f$ is general enough, we have exactly one aligned triple and the other $4$ points are in general position; in this case, the net~$\Sigma_f$ contains exactly three triangles, namely the unions of the line passing through the aligned triple and the reducible conics through the $4$ points in general position.

To determine the degree of~$\sL$, we consider a general pencil of cubic forms $\lambda f + \mu g$, and we compute the number of elements with associated net $\Sigma_{\lambda f + \mu g}$ containing a triangle.

To this aim, denote by $\mathcal{T} \subset \p^9$ the variety of triangles; it is a classical result that its dimension is~$6$ and its degree is~$15$,
see for instance \cite[Section~2.2.2]{3264}. We now consider the variety~$\mathcal{N}$ from \Cref{lemma:scroll}.
Note that, since each net containing a triangle, actually contains exactly $3$ of them, the number of nets of~$\mathcal{N}$ containing some triangle is given by
\[
  \frac{\mathcal{T} \cdot \mathcal{N}}{3} = \frac{{15} \cdot {3}}{3} = 15 \,.
\]
This implies that $\deg \sL = 15$.
\end{proof}

\section{Eigenschemes of positive dimension}
\label{positive_dim}

In this section, we consider positive dimensional eigenschemes. By \cite{BGV}, an eigenscheme cannot be of pure dimension~$1$, and the possible $1$-dimensional components are of degree~$1$ or~$2$.
In what follows, we shall determine the degree of the
$0$-dimensional residual scheme in both cases.

\begin{prop}
\label{proposition:positive_dimension}
Let $C = V(f) \subset \p^2$ be a cubic curve.
Assume that $\dim \Eig{f} = 1$.
\begin{enumerate}
  \item If the $1$-dimensional component of~$\Eig{f}$ is a line~$\ell$,
  then the residual subscheme $Z := \mathrm{Res}_{\ell} \bigl( \Eig{f} \bigr)$ in~$\Eig{f}$ with respect to~$\ell$ has degree~$3$. Moreover, the scheme $Z$ is not contained in a line.
  \item If the $1$-dimensional component of~$\Eig{f}$ is a conic~$\Gamma$,
  then the residual subscheme $Z := \mathrm{Res}_{\Gamma} \bigl( \Eig{f} \bigr)$ in~$\Eig{f}$ with respect to~$\Gamma$ has degree~$1$.
\end{enumerate}
\end{prop}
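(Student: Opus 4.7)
My strategy in both cases is to use the positive-dimensional hypothesis to force a very restrictive shape on $f$, and then to compute the residual ideal directly via the colon construction $I_{\Eig{f}} : (z)$ or $I_{\Eig{f}} : (Q)$.

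For Case~(1), the inclusion $\ell \subset \Eig{f}$ forces each minor $g_i$ to be divisible by the linear form defining $\ell$. After normalizing $\ell = V(z)$ via the $\SO_3(\C)$-action, the identity $g_i(x,y,0) \equiv 0$ unfolds as follows: $g_2(x,y,0) = x f_z(x,y,0)$ and $g_3(x,y,0) = y f_z(x,y,0)$ jointly give $f_z(x,y,0) \equiv 0$; and $g_1(x,y,0) = x F_y - y F_x$ with $F(x,y) := f(x,y,0)$ vanishes identically only when $F \equiv 0$, as a short monomial check on cubics in two variables shows. Combining, $z^2 \mid f$, so $f = z^2 L$ for a linear form $L = a x + b y + c z$. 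Substituting back gives $g_1 = z^2(bx-ay)$ and $g_2 = z\, q_2$, $g_3 = z\, q_3$ for explicit conics $q_2, q_3$, and the residual ideal is $J = (z(bx-ay),\, q_2,\, q_3)$. Splitting $V(J)$ by its intersection with $\{z=0\}$ (which gives the single point $(-b : a : 0)$) and with $\{bx-ay=0\}$ off $\ell$ (parametrizing $(x,y) = (at, bt)$ reduces to a quadratic in $(t:z)$) yields three points, so $\deg Z = 3$; these are collinear precisely when $a^2+b^2=0$, which corresponds to $L$ meeting $\ell$ at a point of $\iso$.

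For Case~(2), writing $\Gamma = V(Q)$, the same reasoning gives $g_i = Q \cdot \ell_i$ for linear forms $\ell_i$. The universal syzygy $z g_1 - y g_2 + x g_3 = 0$ (the Laplace expansion of a $3\times 3$ determinant with two identical rows $(x,y,z)$) becomes $Q \, (z\ell_1 - y\ell_2 + x\ell_3) = 0$, so $z\ell_1 - y\ell_2 + x\ell_3 \equiv 0$ as a polynomial identity. Matching coefficients reduces the coefficient matrix of $(\ell_1,\ell_2,\ell_3)$ to one depending on three parameters $\alpha,\beta,\gamma$ with determinant $0$, whose $2\times 2$ minors are $\pm\alpha\beta,\, \pm\alpha\gamma,\, \pm\beta\gamma,\, \alpha^2,\, \beta^2,\, \gamma^2$. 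These cannot vanish simultaneously unless the $\ell_i$ themselves vanish, which is excluded, so the matrix has rank exactly $2$ and $V(\ell_1,\ell_2,\ell_3)$ reduces to the single reduced point $(\beta:\gamma:-\alpha)$, which is the residual of degree $1$.

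The main difficulty will be the non-collinearity assertion in Case~(1): the explicit residual is collinear precisely on the codimension-one stratum $a^2+b^2=0$, and I would argue that this is incompatible with the hypothesis that $\ell$ is a simple $1$-dimensional component of $\Eig{f}$. Concretely, on this stratum the three residual points collapse to a length-$3$ scheme supported at a single point of $\ell \cap \iso$, which shows up as an embedded component of $\Eig{f}$ along $\ell$ rather than a disjoint $0$-dimensional residual, so it falls outside the intended decomposition of $\Eig{f}$ into a line plus a finite residual.
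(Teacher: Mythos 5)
Your Case~(2) is essentially sound: it is a hands-on version of the argument the paper gets from the Koszul complex (\Cref{lemma:Koszul}), and the conclusion that $(\ell_1,\ell_2,\ell_3)$ must be the $2\times 2$ minors of a matrix with first row $(x,y,z)$ and constant second row, hence cut out a single reduced point, is correct. Case~(1), however, has a genuine gap. The action of $\SO_3(\C)$ on lines has \emph{two} orbits — lines tangent to $\iso$ and lines not tangent to it — and $V(z)$ lies in the non-tangent orbit, so your normalization $\ell = V(z)$ silently excludes every tangent line. This is not a removable technicality: for a tangent line the structural conclusion you rely on, namely $f=\ell^2 L$, is false. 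The paper's own \Cref{proposition:eigenline_tangent} shows that when $t$ is tangent to $\iso$ the cubics with $t\subseteq\Eig{f}$ form the strictly larger family $t^2\ell+\lambda f(r_0)$ of \Cref{eq:cubics_with_tangent_eigenline}, and your computation gives no information about the residual scheme for those cubics. The paper avoids this dichotomy entirely: it factors the g.c.d.\ $g$ out of $g_1,g_2,g_3$, transports the universal syzygy to $zh_1-yh_2+xh_3=0$, applies \Cref{lemma:Koszul} to realize $(h_1,h_2,h_3)$ as the minors of a matrix $\bigl(\begin{smallmatrix} x & y & z\\ m_3 & m_2 & m_1\end{smallmatrix}\bigr)$ with $\deg m_i = 2-\deg g$, and reads off the degree from \Cref{theorem:nonempty}, uniformly in both cases.

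Your treatment of the non-containment claim is also not correct as stated. The stratum $a^2+b^2=0$ is \emph{not} incompatible with the hypotheses: for instance $f=z^2(x+\iii y+cz)$ has $\dim\Eig{f}=1$ with $1$-dimensional component exactly the (non-tangent) line $V(z)$, and its residual is a genuine degree-$3$ zero-dimensional scheme, now non-reduced and supported on the line $x+\iii y=0$. Nothing "falls outside the decomposition"; what happens is that the containment-in-a-line question becomes scheme-theoretic, and one can check that the length-$2$ piece is not contained in $x+\iii y=0$, so the conclusion still holds — but your argument does not establish it, and your attempt to exclude the case is based on a false claim. The clean way out, which is what the paper's one-line remark amounts to, is: if the degree-$3$ scheme $Z=V(h_1,h_2,h_3)$ were contained in a line $L$, each conic $h_i$ would restrict to $L\cong\p^1$ as a degree-$2$ form vanishing on a degree-$3$ divisor, hence $h_i|_L\equiv 0$ and $L$ would divide every $h_i$, contradicting $\dim Z=0$. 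I recommend replacing both the normalization and the case analysis on $a^2+b^2$ with these uniform arguments.
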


The proof relies on the exactness of the Koszul complex associated with the regular sequence~$x$, $y$, and~$z$, specifically we use the following result (see \cite[Theorem~7.3.13]{Dolgachev}).

\begin{lemma}
\label{lemma:Koszul}
Let $h_1,h_2,h_3\in\C[x,y,z]_d$ with $d \ge 1$. Then
\begin{equation}
\label{eq:linear_relation}
  zh_1-yh_2+xh_3 = 0
\end{equation}
if and only if there exist $m_1,m_2,m_3\in\C[x,y,z]_{d-1}$ such that
\begin{equation}
\label{eq:minors_lemma}
  h_1 = xm_2-ym_3 \,, \qquad
  h_2 = zm_3-xm_1 \,, \qquad
  h_3 = ym_1-zm_2 \,.
\end{equation}
\end{lemma}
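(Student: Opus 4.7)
The plan is to handle the ``if'' direction by a one-line substitution and the ``only if'' direction by invoking the exactness of the Koszul complex on the regular sequence $(x,y,z)$ in $R := \C[x,y,z]$.

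For the ``if'' part, plugging the expressions in \Cref{eq:minors_lemma} into $zh_1-yh_2+xh_3$ produces the six monomials $\pm xym_1$, $\pm xzm_2$, $\pm yzm_3$, each occurring twice with opposite signs, so the sum is identically zero. This is a mechanical check and needs no further ideas.

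For the ``only if'' part, the relation \Cref{eq:linear_relation} says that $(h_3,-h_2,h_1)$ lies in the kernel of the multiplication map
\[
  \mu \colon R^3 \longrightarrow R \,, \qquad (a,b,c) \longmapsto xa+yb+zc \,.
\]
Since $(x,y,z)$ is a regular sequence in $R$, the Koszul complex on $(x,y,z)$ is exact, and in particular $\ker\mu$ is generated as an $R$-module by the three elementary Koszul syzygies $(y,-x,0)$, $(z,0,-x)$, $(0,z,-y)$. Writing $(h_3,-h_2,h_1)$ as an $R$-linear combination of these three generators and reading off the coefficients (with the signs chosen to match the asymmetric form of \Cref{eq:minors_lemma}) exhibits polynomials $m_1,m_2,m_3\in R$ satisfying the required identities. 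Because $h_1,h_2,h_3$ are homogeneous of degree $d$ and the Koszul generators are linear, only the degree-$(d-1)$ components of the coefficients contribute, so the $m_i$ can be taken in $\C[x,y,z]_{d-1}$.

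The only mildly delicate point is the sign bookkeeping needed to align the Koszul decomposition with the precise form of \Cref{eq:minors_lemma}; this is routine but must be done carefully. If one prefers to avoid invoking the Koszul complex, a fully elementary alternative is to reduce \Cref{eq:linear_relation} modulo $x$: this yields $z\bar h_1=y\bar h_2$ in the UFD $\C[y,z]$, forcing $y\mid\bar h_1$ and $z\mid\bar h_2$ and so producing a candidate $m_3$; repeating the argument modulo $y$ and modulo $z$ produces candidates for $m_1$ and $m_2$, and the residual identity in $R$ can then be solved directly. Either route establishes the converse, completing the proof.
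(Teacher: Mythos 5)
Your proof is correct and follows essentially the same route as the paper: both rest on the exactness of the Koszul complex for the regular sequence $(x,y,z)$, with your ``elementary Koszul syzygies'' being precisely the columns of the matrix $\beta$ in the paper's exact sequence, so that writing $(h_3,-h_2,h_1)\in\ker\mu$ as a combination of them is the same step as lifting it through $\beta$. Your added remark on extracting the degree-$(d-1)$ homogeneous components of the coefficients tidies up a point the paper leaves implicit, and the elementary UFD alternative you sketch is a valid but unneeded detour.
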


\begin{proof}
If $h_1,h_2,h_3$ satisfy \Cref{eq:minors_lemma}, then it is immediate to check that they satisfy \Cref{eq:linear_relation} as well.
Conversely, assume that \Cref{eq:linear_relation} holds and let $R = \C[x,y,z]$.
The Koszul complex in the ring~$R$ is an exact sequence of $R$-modules
\[
  0 \to R\xrightarrow{\alpha} R^{\oplus 3} \xrightarrow{\beta} R^{\oplus 3} \xrightarrow{\gamma} R \to R/(x,y,z) \to 0 \,,
\]
where the maps are $\alpha(p) = (p x, p y, p z)$, $\gamma (w_1,w_2,w_3) = w_1 x + w_2 y + w_3 z$ and $\beta$ is defined by the matrix
\[
  \left(
  \begin{array}{ccc}
    0 & -z & y\\
    z & 0 & -x\\
    -y & x & 0 \\
  \end{array}
  \right) \,.
\]
The syzygy $zh_1-yh_2+xh_3=0$ implies that $(h_3, -h_2, h_1)$ is in the kernel of~$\gamma$,
and since the Koszul complex is exact, the triple $(h_3,-h_2, h_1)$ lies in the image of~$\beta$.
It follows that there exist $m_1,m_2,m_3 \in \C[x,y,z]_{d-1}$ such that $\beta (m_3,m_2,m_1)=(h_3,-h_2, h_1)$,
so \Cref{eq:minors_lemma} holds.
\end{proof}

Now we are in the position to prove \Cref{proposition:positive_dimension}.

\begin{proof}[Proof of \Cref{proposition:positive_dimension}]
Let $g_1$, $g_2$ and~$g_3$ be the order~$2$ minors determining the eigenscheme of~$f$, and let $g$ be the greatest common factor.
By writing
\[
  g_i = g \, h_i, \quad i=1,2,3
\]
we have that the residual scheme is defined by the ideal
$(h_1,h_2,h_3)$. Moreover, the linear
syzygy between the generators~$g_i$ gives rise to the syzygy:
\[
  z\, h_1 - y\, h_2 + x\, h_3 = 0 \,.
\]
By \Cref{lemma:Koszul}, the triple $(h_1,h_2,h_3)$ is the triple of order two minors of a matrix
\[
  \begin{pmatrix}
    x & y & z \\
    m_3 & m_2 & m_1
  \end{pmatrix} \,.
\]
for suitable forms $m_i \in \C[x,y,z]_r$, where $r =2 - \deg g \ge 0$.
By the assumption that $g$ is the greatest common factor of the three minors $g_1$, $g_2$ and~$g_3$, the zero scheme $Z$ of $(h_1,h_2,h_3)$ is $0$-dimensional, and
by \Cref{theorem:nonempty} its degree is~$3$ if $r=1$. Observe that $Z$, being intersection of three conics with no common component, is not contained in a line.

Finally, if $r=0$, the triple $(h_1,h_2,h_3)$ corresponds to three linear forms belonging to a pencil, thus the zero locus is a point.
\end{proof}

\begin{es}
Consider the form
\[
  f(x, y, z) = x^2 (y - z) \,.
\]
In the language of \Cref{proposition:positive_dimension} and its proof, we have $\ell=x$,
\[
 h_1 = x^2-2y^2+2yz \,, \quad
 h_2 = -x^2-2yz+2z^2\,, \quad
 h_3 =-x(y+z) \,.
\]
The two syzygies in degree~$3$ are:
\[
  z \, h_1 - y \, h_2 + x \, h_3 = 0, \quad
  xh_1 \, +x\,h_2 - 2(y+z) \, h_3 = 0.
\]
Finally, $Z = \{ (0:1:1),(2:1:-1),(-2:1:-1) \}$.
Observe that one point is on the singular line $x=0$.
\end{es}

\subsection{Eigenschemes containing a line}
Here we study the cases in which $\Eig{f}$ contains a line. We shall see that
this condition is equivalent to the condition of having four collinear
eigenpoints; this will allow us to charaterize the cubics $C = V(f)$ which have a line
in $\Eig{f}$ and, finally, to prove that the polynomials $f$
belong all to the locus $\sL \subset \p^9$ (see \Cref{definition:locus_L}).
We first need the following result, which will be generalized in Section 6.2.

\begin{lemma}
\label{lemma:twoTangentsCiso} Let $r = ax+by+cz$ be a line of the plane
and suppose it intersects $\iso$ in two
distinct points~$P_1$ and~$P_2$. Consider the cubic with eq:
\begin{equation}
\label{eq:2_lines_of_eigenpoints}
  f(r) = \left( r^2-3\left(a^2+b^2+c^2\right)\iso \right) \, r \,;
\end{equation}
then, the two tangent lines to $\iso$ in~$P_1$ and~$P_2$
are contained in $\Eig {f(r)}$.
\end{lemma}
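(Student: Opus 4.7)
The plan is to verify directly that for every point $P \in \ell_i$, the vector $\nabla f(r)(P)$ is proportional to $P$, which is equivalent to $P \in \Eig{f(r)}$. Setting $v := (a,b,c)$ and $X := (x,y,z)$ (so that $r = \scl{v}{X}$ and $\iso = \scl{X}{X}$), a direct application of the product and chain rules gives
\[
 \nabla f(r) = 3\bigl(r^2 - \scl{v}{v}\,\iso\bigr)\,v - 6\scl{v}{v}\,r\,X \,.
\]
Taking the cross product with $P$ and using $P \times P = 0$ yields
\[
 P \times \nabla f(r)(P) = 3\bigl(r(P)^2 - \scl{v}{v}\,\iso(P)\bigr)\,(P \times v) \,,
\]
and since the three components of $X \times \nabla f(r)$ coincide (up to sign) with the generators $g_1, g_2, g_3$ of \Cref{eq:def_minors}, it suffices to prove that the quadratic form $r^2 - \scl{v}{v}\,\iso$ vanishes identically on each of $\ell_1$ and $\ell_2$.

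The key geometric observation is that $v$ is the pole of $r$ with respect to~$\iso$, since the polar of the point~$v$ is exactly the line~$r = ax + by + cz$. By classical pole-polar duality, $v$ then lies on both tangent lines~$\ell_1$ and~$\ell_2$. The hypothesis that $r$ meets $\iso$ in two distinct points forces $\scl{v}{v} \neq 0$ (otherwise $r$ would be tangent to~$\iso$), which in turn gives $v \notin \iso$, so $v \neq P_i$, and hence $\ell_i = P_i \vee v$. Parametrizing $P = \alpha P_i + \beta v$ on~$\ell_i$, and using $\scl{P_i}{P_i} = 0$ (as $P_i \in \iso$) together with $\scl{v}{P_i} = r(P_i) = 0$ (as $P_i$ lies on~$r$), a one-line bilinear expansion yields
\[
 r(P) = \beta \scl{v}{v} \,, \qquad \iso(P) = \beta^2 \scl{v}{v} \,,
\]
so that $r(P)^2 - \scl{v}{v}\,\iso(P) = \beta^2 \scl{v}{v}^2 - \scl{v}{v} \cdot \beta^2 \scl{v}{v} = 0$, as required.

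The only non-routine step is the identification of $v$ (the vector of coefficients of~$r$) with the common point $\ell_1 \cap \ell_2$ through the pole-polar correspondence; once this is noted, the rest reduces to an essentially coordinate-free computation with the bilinear form~$\scl{\cdot}{\cdot}$, and the hypothesis that $r$ is secant (rather than tangent) to~$\iso$ enters precisely through the non-vanishing of $\scl{v}{v}$. The argument does not use any choice of coordinates beyond $v$, $P_1$, $P_2$, and in particular avoids the heavier computer-algebra approach invoked elsewhere in the paper.
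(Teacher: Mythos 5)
Your proof is correct, and it takes a genuinely different route from the paper's. The paper proves this lemma by writing a rational parametrization of the isotropic conic, constructing $P_1$, $P_2$, $r$, and $f(r)$ explicitly, and verifying the claim by a symbolic computation in a notebook. You instead give a coordinate-free argument: the identity $\nabla f(r) = 3\bigl(r^2 - \scl{v}{v}\,\iso\bigr)v - 6\scl{v}{v}\,r\,X$ reduces the eigenpoint condition along the tangent lines to the vanishing of the quadric $r^2 - \scl{v}{v}\,\iso$ on them (since the components of $X \times \nabla f$ are, up to sign, the generators $g_1,g_2,g_3$ of \Cref{eq:def_minors}), and that quadric is precisely the classical equation of the pair of tangents drawn from the pole $v$ of $r$ --- a fact you re-derive by the short bilinear expansion on $\ell_i = P_i \vee v$, using $\scl{P_i}{P_i}=0$ and $\scl{v}{P_i}=0$. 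All steps check out, including the use of the secancy hypothesis only through $\scl{v}{v}\neq 0$ (needed for $v \neq P_i$, though not for the final cancellation). What your approach buys is the elimination of the computer-algebra step and a conceptual explanation of the otherwise mysterious coefficient $3(a^2+b^2+c^2)$ in \Cref{eq:2_lines_of_eigenpoints}: it is exactly what makes the $v$-component of the gradient proportional to the tangent-pair quadric. The paper's symbolic route is shorter to state but opaque; yours would make the lemma self-contained.
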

\begin{proof}
A generic point on $\iso$ is of the form $(\lambda^2 + \mu^2,
\iii\lambda^2 -\iii\mu^2, 2\iii\lambda \mu)$ for $(\lambda: \mu) \in \p^1$.
We can therefore easily obtain two points~$P_1$ and~$P_2$ on~$\iso$,
determine the line $r = P_1 \vee P_2$ and define~$f(r)$ as above.
The two tangents to~$\iso$ in~$P_1$ and~$P_2$ turn out to be lines of eigenpoints
for~$f(r)$. For details, consult \nb{06}{F1}.
\end{proof}

\begin{lemma}
\label{lemma:four_points_on_line}
Suppose that $P_1, P_2, P_3, P_4$ are four distinct points belonging to a line~$t$.
Given a cubic $C=V(f)$, then $P_1, \dotsc, P_4 \in \Eig{f}$ if and only if
$t \subseteq \Eig{f}$.
Moreover,
\begin{equation*}
  6 \leq \rk \,\Phi(P_1, P_2, P_3, P_4) \leq 7
\end{equation*}
and the rank is~$6$ if and only if $\sigma(P_1, P_2) = 0$, i.e.\ if
and only if $t$ is tangent to the isotropic conic.
\end{lemma}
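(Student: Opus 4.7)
The ``if'' direction of the first assertion is immediate. For the converse, I observe that each minor $g_1, g_2, g_3$ defining $\Eig{f}$ is a cubic; if four distinct points of the line~$t$ are zeros of $g_i$, by B\'ezout $g_i$ must vanish identically on~$t$, so $t \subset V(g_1) \cap V(g_2) \cap V(g_3) = \Eig{f}$. This argument simultaneously proves the first assertion and shows that the kernel of $\Phi(P_1, \dotsc, P_4)$ is exactly $K(t) := \{ f \in \C[x,y,z]_3 : t \subseteq \Eig{f}\}$, which depends only on~$t$; hence $\rk \, \Phi(P_1, \dotsc, P_4)$ depends only on~$t$, not on the particular choice of the four points.

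For the upper bound, I remark that for any line $t = V(\ell)$ the three-parameter family $\{\ell^2 L : L \in \C[x,y,z]_1\}$ lies in $K(t)$, because $\nabla (\ell^2 L) = 2\ell (\nabla \ell) L + \ell^2 \nabla L$ vanishes on~$t$. This gives $\dim K(t) \geq 3$ and $\rk \, \Phi \leq 7$.

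To pin down the rank exactly, the plan is to normalize~$t$ using the $\SO_3(\C)$-action, which preserves~$\iso$ and hence preserves the property of being tangent to~$\iso$; via \Cref{proposition:two_orbits} it then suffices to analyze two representatives: (i) $t = V(z)$, representing lines not tangent to~$\iso$, and (ii) $t = V(x + \iii y)$, tangent to~$\iso$ at $T = (1:\iii:0)$. In each case I would write a generic $f = \sum a_{ijk} x^i y^j z^k$, parametrize~$t$ by a single projective coordinate, expand each $g_i|_t$ as a binary cubic, and read off the independent linear constraints on the $a_{ijk}$. In case~(i) the constraints force $f = z^2 \cdot L$ for a linear $L$, giving $\dim K(t) = 3$ and $\rk \, \Phi = 7$. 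In case~(ii) an analogous but more delicate computation yields $\dim K(t) = 4$ and $\rk \, \Phi = 6$; an independent sanity check is that \Cref{lemma:twoTangentsCiso} produces cubics $f(r)$ (for any line $r$ through~$T$ distinct from~$t$) that lie in $K(t)$ but not in $\{\ell^2 L\}$, accounting for the extra dimension.

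The main obstacle is case~(ii). A naive count using the syzygy $z\,g_1 - y\,g_2 + x\,g_3 = 0$ restricted to~$t$ yields seven independent conditions, the same as in case~(i); the drop from seven to six surfaces only after explicit expansion, where the tangency of~$t$ to~$\iso$ at~$T$ forces a nonobvious coincidence among the constraints. The calculation is short enough to carry out by hand and is the natural candidate for delegation to a Jupyter notebook, in the spirit of the rest of the paper.
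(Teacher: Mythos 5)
Your plan is correct and is genuinely more conceptual than the paper's proof, which consists entirely of a reference to the notebook \nb{06}{F2} (a direct symbolic verification in generic coordinates). Your B\'ezout argument for the first assertion is complete and clean: each $g_i$ restricts to a binary cubic on $t$, so four distinct zeros force $g_i|_t \equiv 0$, hence $\ell \mid g_i$ for all $i$ and $t \subseteq \Eig{f}$ scheme-theoretically; this also correctly identifies $\ker \Phi(P_1,\dotsc,P_4)$ with the linear space $K(t)$, so the rank depends only on $t$. The observation that $\{\ell^2 L\}$ is a $3$-dimensional subspace of $K(t)$ gives the bound $\rk \le 7$ without any computation, and the $\SO_3(\C)$ normalization (valid because the action on lines, dually, again has exactly the two orbits of \Cref{proposition:two_orbits}, distinguished by tangency to $\iso$) reduces the exact rank determination to two concrete representatives $V(z)$ and $V(x+\iii y)$ — a much smaller verification than the paper's generic-coordinate computation, and one whose answers are confirmed by \Cref{proposition:eigenline_tangent}. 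Your sanity check via \Cref{lemma:twoTangentsCiso} is also sound: a line $r \ne t$ through the tangency point $T$ meets $\iso$ in two distinct points, so $f(r) \in K(t) \setminus \{\ell^2 L\}$, giving $\dim K(t) \ge 4$ and hence $\rk \le 6$ in the tangent case essentially for free. What remains to be done, and what you rightly flag, is the matching lower bounds ($\dim K(t) \le 3$, resp.\ $\le 4$) for the two normal forms; this is a routine finite check, entirely in the spirit of the paper's own delegation to notebooks, so the plan has no gap, only a deferred computation.
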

\begin{proof}
The computations are in \nb{06}{F2}.
\end{proof}

We are now in the position to give a characterization of cubics with an eigenscheme containing a line.
\begin{prop}
\label{proposition:eigenline_tangent}
Let $t$ be a line of $\mathbb{P}^2$.
\begin{itemize}
   \item
   If $t$ is not tangent to the isotropic conic, then $t \subseteq \Eig{f}$ for a cubic $C=V(f)$ if and
only $f = t^2\ell$, where $\ell$ is any line of the plane.
  \item
  If $t$ is tangent to the isotropic conic~$\iso$ (in a point~$P$),
then $t \subseteq \Eig{f}$ for a cubic $C=V(f)$ if and only if
  \begin{equation}
  \label{eq:cubics_with_tangent_eigenline}
    f = t^2 \ell+\lambda f(r_0),
  \end{equation}
  where $\ell$ is any line of the plane, $\lambda \in \C$,
  $r_0\neq t$ is any fixed line passing through $P$
  and~$f(r_0)$ is defined by \Cref{eq:2_lines_of_eigenpoints}.
  \end{itemize}
  Moreover, if $t$ is tangent to $\iso$, any cubic of
  \Cref{eq:cubics_with_tangent_eigenline} is singular in $P$ and, if $V(f)$ is irreducible, then $V(f)$ is nodal and the line $t$ belongs to the tangent cone of $V(f)$ in $P$.
\end{prop}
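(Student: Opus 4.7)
The plan is to apply \Cref{lemma:four_points_on_line} to convert $t \subseteq \Eig{f}$ into a linear condition on cubics, and then match dimensions. Pick four distinct points $P_1, \dotsc, P_4$ on $t$; by that lemma, $t \subseteq \Eig{f}$ is equivalent to $P_1, \dotsc, P_4 \in \Eig{f}$, and $\rk \, \Phi(P_1,\dotsc,P_4)$ equals $7$ when $t$ is not tangent to $\iso$ and $6$ when it is. Consequently, the vector space of all cubics $f$ with $t \subseteq \Eig{f}$ has dimension $3$ in the non-tangent case and $4$ in the tangent case.

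I would next verify that the stated families lie in this space. For any linear $\ell$, the gradient $\nabla(t^2\ell) = 2 t\ell\nabla t + t^2\nabla \ell$ vanishes along $V(t)$, so every point of $V(t)$ is automatically an eigenpoint of $t^2 \ell$. The linear subspace $\{t^2\ell : \ell \in \C[x,y,z]_1\}$ has dimension $3$, which matches the dimension computed above in the non-tangent case, settling the first item by inclusion. In the tangent case, the additional direction is provided by $f(r_0)$ from \Cref{lemma:twoTangentsCiso}: since $P \in \iso \cap r_0$ and $r_0 \neq t$, the intersection $r_0 \cap \iso$ consists of $P$ and a second point $P_2$; the tangent to $\iso$ at $P$ is exactly $t$, so that lemma gives $t \subseteq \Eig{f(r_0)}$. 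To show $f(r_0) \notin \{t^2\ell\}$, observe that $r_0$ divides $f(r_0)$ and $r_0 \neq t$, so such an equality would force $\ell$ to be a scalar multiple of $r_0$, hence $r_0^2 - 3(a^2+b^2+c^2)\iso$ to be proportional to $t^2$; but this conic is non-degenerate (after an $\SO_3(\C)$-change of coordinates bringing $r_0$ to $x$, it becomes $-2x^2 - 3y^2 - 3z^2$), a contradiction. Thus $\{t^2\ell + \lambda f(r_0)\}$ is a $4$-dimensional vector space of cubics, matching the dimension above.

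For the ``moreover'' assertion, singularity of $f = t^2\ell + \lambda f(r_0)$ at $P$ follows from $\nabla(t^2\ell)(P) = 0$ (because $t(P) = 0$) and $\nabla f(r_0)(P) = 0$ (because $r_0(P) = \iso(P) = 0$ make both $r_0^2 - 3(a^2+b^2+c^2)\iso$ vanish at $P$ and kill the remaining gradient terms). To identify the tangent cone, I would expand $f$ to second order in affine coordinates centered at $P$, writing $X = P + h$: since $\scl{P}{P} = 0$, one obtains $\iso(X) = 2\scl{P}{h} + \iso(h)$, and the tangent line to $\iso$ at $P$ is $\scl{P}{X} = 0$, which equals $t$ up to a nonzero scalar, so the linear part of $\iso$ at $P$ is a nonzero multiple of $t$. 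Hence the quadratic part of $f$ at $P$ equals $\ell(P)\, t^2 - 6\lambda (a^2+b^2+c^2)\, t\, r_0$, which factors as $t \cdot \bigl(\ell(P)\, t - 6\lambda(a^2+b^2+c^2)\, r_0\bigr)$. When $V(f)$ is irreducible, $\lambda \neq 0$ (else $f = t^2\ell$ would be reducible), and the second factor is linearly independent from $t$ because $r_0$ and $t$ are linearly independent; so the tangent cone is the union of two distinct lines, $P$ is a node, and $t$ belongs to it.

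The most delicate step is the local identification of the linear part of $\iso$ at $P$ with $t$ up to scalar; this rests on the classical fact that the projective tangent line to a smooth conic at a point coincides with its polar line, which here yields that $\scl{P}{X}$ defines the tangent line to $\iso$ at $P$ and thus equals $t$ projectively. The remaining computations (dimension counts, gradient evaluations, Taylor expansions) are routine and can be cross-checked symbolically in the spirit of neighboring notebooks of the paper.
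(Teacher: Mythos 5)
Your proposal is correct and follows essentially the same route as the paper: reduce $t \subseteq \Eig{f}$ to four collinear eigenpoints via \Cref{lemma:four_points_on_line}, read off the dimension of the linear system from the rank ($7$ or $6$), and exhibit explicit families of matching dimension, with $f(r_0)$ supplying the extra direction in the tangent case via \Cref{lemma:twoTangentsCiso}. The only difference is that you carry out by hand (gradient vanishing on $V(t)$, nondegeneracy of $r_0^2 - 3(a^2+b^2+c^2)\iso$ for linear independence, and the Taylor expansion at $P$ for the nodal claim) the verifications the paper delegates to the notebook \nb{06}{F3}, and these hand arguments are sound.
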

\begin{proof}
Suppose that $t$ is not tangent to $\iso$ and let $P_1, \dots, P_4$ be four
distinct points on it. By \Cref{lemma:four_points_on_line}, the projective linear system
$\Lambda(\Phi(P_1, \dotsc, P_4))$ is two-dimensional, but also all the cubics
defined by $t^2\ell$ (for $\ell$ any line of $\mathbb{P}^2$) have $t$ among
their eigenpoints and form a
two-dimensional linear system of cubics, so the two linear systems
coincide.

If $t$ is tangent to $\iso$ in a point $P$, w.l.o.g.\ we can assume that
$P$ is $(1: \iii: 0)$ and $t$ is $x+\iii y$. If we impose to the generic
cubic of $\mathbb{P}^2$ to contain $t$ in the eigenscheme, we get a linear
system of cubics of dimension $3$. It can be shown that a basis of such a linear system is given by $H_1 = t^2x$, $H_2 = t^2y$, $H_3 = t^2z$ and
$H_4 = f(r_0)$, where $r_0$ is any line passing through $P$ and different from $t$. This proves the
second claim. It is immediate to see that $P$ is singular for the cubics of the linear system. \nb{06}{F3} contains the details of the computations which allow to conclude.
\end{proof}
\begin{prop}
\label{proposition:limitCubics}
Any cubic that has a line~$t$ in the eigenscheme is the limit of a family of cubics whose general member has a $0$-dimensional eigenscheme with an aligned triple.
\end{prop}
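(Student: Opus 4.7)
The strategy is to realize $[f]$ as a point of a projective linear system in which an aligned triple of eigenpoints is built in by construction, and then invoke \Cref{theorem:irreducible} to produce inside that linear system a dense open subset of cubics with regular eigenscheme.

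Concretely, I would first pick three distinct points $P_1, P_2, P_3$ on~$t$, arranged so that none of them is the tangency point of~$t$ with~$\iso$ (if such a tangency occurs). This is possible since the tangency point, when it exists, is unique, while $t$ contains infinitely many points. Since $t \subseteq \Eig{f}$, each $P_i$ is an eigenpoint of~$f$, so $[f]$ belongs to the projective linear system $\Lambda := \Lambda\bigl(\Phi(P_1, P_2, P_3)\bigr) \subset \p^9$. By construction every cubic in~$\Lambda$ has $P_1, P_2, P_3$ as eigenpoints, producing an aligned triple on~$t$.

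With our choice, the triple $(P_1, P_2, P_3)$ lies in the open subset~$\mathcal{W}$ introduced in the proof of \Cref{theorem:irreducible}: either $\sigma(P_1, P_2) \ne 0$, i.e.\ $t$ is not tangent to~$\iso$; or $\sigma(P_1, P_2) = 0$ but $s_{11} s_{22} s_{33} \ne 0$, since the unique point of $t \cap \iso$ is not among the~$P_i$. By \Cref{proposition:three_distinct_ranks} the rank of $\Phi(P_1,P_2,P_3)$ equals~$6$, so $\dim \Lambda = 3$; and by the proof of \Cref{theorem:irreducible} the fiber $\alpha^{-1}(P_1,P_2,P_3)$ is a dense open subset of~$\Lambda$ contained in $\mathcal{U}' \subseteq \mathcal{U}$.

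To finish, I would choose any line in~$\Lambda$ through~$[f]$ meeting $\alpha^{-1}(P_1,P_2,P_3)$: its generic member is a cubic with $0$-dimensional reduced eigenscheme for which $\{P_1, P_2, P_3\}$ is an aligned triple of eigenpoints, and $[f]$ itself is the special member at which the line~$t$ lies in the eigenscheme. This exhibits~$f$ as the required limit. The only subtle point is the initial selection of the three points: picking them away from the (possible) tangency point places $(P_1,P_2,P_3)$ inside~$\mathcal{W}$, thus bypassing the degenerate rank-$5$ branch of \Cref{proposition:three_distinct_ranks}, which would otherwise be the main obstacle.
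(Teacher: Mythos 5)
Your overall strategy coincides with the paper's: both proofs place $[f]$ inside the linear system $\Lambda\bigl(\Phi(P_1,P_2,P_3)\bigr)$ for three points of~$t$ and then degenerate from its general member. However, there is a genuine gap at the step where you assert that $\alpha^{-1}(P_1,P_2,P_3)$ is a \emph{dense open, in particular non-empty,} subset of~$\Lambda$. What \Cref{proposition:three_distinct_ranks} gives you is only that $\rk\,\Phi(P_1,P_2,P_3)=6$, hence $\dim\Lambda=3$; it says nothing about whether $\Lambda$ contains even a single cubic with $0$-dimensional eigenscheme. The fiber description in the proof of \Cref{theorem:irreducible} is a statement about triples in~$\mathcal{W}$ used for a dimension count, and the triples you are forced to use here are very special: they lie on a line that is entirely contained in some eigenscheme, which is exactly the kind of degeneracy for which a genericity appeal is unavailable. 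A priori it could happen that $\Lambda\subseteq\Delta_{3,3}$ (note that a $3$-plane can perfectly well lie inside the hypersurface $\Delta_{3,3}$ --- the $2$-plane of cubics $t^2\ell$ already does), in which case your pencil through~$[f]$ would never meet the locus you need.

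The missing ingredient, which is precisely what the paper supplies by explicit computation, is the exhibition of one element of~$\Lambda$ with $0$-dimensional eigenscheme. After normalizing $t$ by the $\SO_3(\C)$-action, the paper writes down $\Lambda\bigl(\Phi(P_1,P_2,P_3)\bigr)$ explicitly --- $f=(ax+by+cz)z^2+d(x^3+y^3)$ in the non-tangent case, and the analogous four-parameter family in the tangent case --- observes via \Cref{proposition:eigenline_tangent} that the sublocus $d=0$ (resp.\ $e=0$) is exactly the set of cubics containing~$t$ in the eigenscheme, and checks that for $d\neq0$ (resp.\ $e\neq0$) the residual eigenpoints are four points in general position. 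Once one such member $[g]\in\Lambda$ is produced, your concluding pencil argument is fine (upper semicontinuity of the fiber dimension of the eigenscheme makes the general member of $\lambda f+\mu g$ have a $0$-dimensional eigenscheme containing the aligned triple $P_1,P_2,P_3$); but without it the proof does not close.
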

\begin{proof}
If the line~$t$ is not tangent to $\iso$, we can assume it is the line $z=0$. We fix on it the three points
\[
  P_1= (1: 0: 0), \ P_2 = (0: 1: 0), \ P_3 = (1: 1: 0)
\]
and we consider the three dimensional linear system of cubic forms $\Lambda \bigl( \Phi(P_1, P_2, P_3) \bigr)$, whose elements $f$ are of the form
\[
  f = (ax + by + cz)z^2 + d(x^3+y^3), \quad a, b, c, d \in \C \,.
\]
If $d=0$, we have $f=t^2 \ell$, hence, by \Cref{proposition:eigenline_tangent}, the form~$f$ is the
generic cubic not tangent to $\iso$ that has the line $t$ in the eigenscheme.
If $d \neq 0$, the eigenpoints of~$f$ different from $P_1, P_2, P_3$ are
the common zeros of the following polynomials:
\begin{align*}
  h_1 & = 3dy^2 - 2axy - 2by^2 + bz^2 - 3cyz \,,\\
  h_2 & = 3dx^2 - 2ax^2 + az^2 - 2bxy - 3cxz \,.
\end{align*}
In general, the ideal $(h_1, h_2)$ gives $4$ points in
general position. \\
In case the line $t$ is tangent to the isotropic conic, we can
assume it is the line $x+\iii y =0$. Here we fix the three
points
\[
P_1 = (1: \iii: 0), \ P_2 = (0: 0: 1), \ P_3 = (1: \iii: 1) \,.
\]
In this case the linear system
$\Lambda \bigl(\Phi(P_1, P_2, P_3)\bigr)$ is four dimensional and
is given by:
\[
f = (x+\iii y)^2(ax + by+cz)+
 d(x^2 + y^2 + 2/3z^2)z+e (x^3 -\iii y^3 + z^3),
 \quad a, b, c, d, e \in \C
\]
If $e=0$, the form $f$ is described by \Cref{eq:cubics_with_tangent_eigenline}, thus it is the generic cubic which contains the line $t$ in the eigenscheme. If $e \not= 0$, as above,
the remaining eigenpoints of~$f$ are given by an ideal generated
by four polynomials $h_1, \dotsc, h_4$.
Also here, the zeros of general $h_1, \dotsc, h_4$ are in general position.
Notebook \nb{06}{F4} collects the computations for this proof.
\end{proof}

\subsection{Eigenschemes containing a conic}

\begin{theorem} Suppose that $\Gamma$ is a conic and let $C=V(f)$ be a cubic such that $\Gamma \subseteq \Eig{f}$. Then we have three possible cases:
    \begin{itemize}
        \item $\Gamma = \iso$. This is true if and only if $f = \ell\iso$ where $\ell$ is any line of the plane;
        \item $\Gamma$ is bitangent to $\iso$ in two distinct points $P_1$ and $P_2$. This is true iff $f = r(\lambda \iso+ \mu r^2)$, where $\lambda, \mu \in \mathbb{C}$, $r = P_1 \vee P_2$. In this case $\Gamma = V(\lambda \iso+3 \mu r^2)$;
        \item $\Gamma$ is iperosculating $\iso$ in a point $P$. This is true iff $f = r(\iso-r^2)$, where $r$ is the tangent to $\iso$ in $P$. In this case
        $\Gamma = V(\iso-3r^2)$.
    \end{itemize}
\end{theorem}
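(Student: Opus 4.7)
The plan is to proceed from the Koszul analysis already used in the proof of \Cref{proposition:positive_dimension}. Since $\Gamma \subseteq \Eig{f}$, each minor $g_i$ factors as $g_i = \Gamma h_i$ with $h_i$ linear, and \Cref{lemma:Koszul} (applied with $d=1$) yields constants $m_1, m_2, m_3 \in \C$ expressing the $h_i$. Reading the identities $g_i = \Gamma h_i$ as divisibility conditions on the partials of $f$ then produces a single linear form $L$ for which
\[
  \partial_x f = m_3 \Gamma + xL, \qquad \partial_y f = m_2 \Gamma + yL, \qquad \partial_z f = m_1 \Gamma + zL,
\]
and Euler's identity gives the basic shape $3f = \Gamma \ell + L \iso$, with $\ell := m_3 x + m_2 y + m_1 z$.

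Next I would impose that the triple above is genuinely the gradient of a cubic, i.e., that mixed partials commute. The three resulting cross-derivative equations condense into the single vector identity $\vec m' \times \nabla \Gamma = \nabla L \times \vec x$, where $\vec m' = (m_3, m_2, m_1)$ is the coefficient vector of $\ell$ and $\vec x = (x,y,z)$. Writing $\nabla \Gamma = 2 A \vec x$ for the symmetric matrix $A$ of $\Gamma$, this translates to the matrix identity $2 [\vec m']_\times A = [\nabla L]_\times$, where $[\cdot]_\times$ denotes the antisymmetric cross-product operator. Since the right-hand side is antisymmetric, so must be $[\vec m']_\times A$, and this forces $A$ to commute with $[\vec m']_\times$.

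A direct check, uniform across the semisimple case $\vec m' \cdot \vec m' \neq 0$ and the nilpotent case $\vec m' \cdot \vec m' = 0$ (since $[\vec m']_\times^2 = \vec m' (\vec m')^T - (\vec m' \cdot \vec m') I$ is symmetric regardless), shows that the symmetric centralizer of $[\vec m']_\times$ is two-dimensional, spanned by $I$ and $\vec m' (\vec m')^T$. Hence $A = \mu I + c\, \vec m' (\vec m')^T$ for some $\mu, c \in \C$, which at the level of quadratic forms reads $\Gamma = \mu \iso + c\, \ell^2$. Substituting back into $2 [\vec m']_\times A = [\nabla L]_\times$ gives $\nabla L = 2\mu \vec m'$, hence $L = 2\mu \ell$, so $L$ is proportional to $\ell$. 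Plugging into $3f = \Gamma \ell + L \iso$ then yields
\[
  f = \ell\left( \mu \iso + \tfrac{c}{3}\, \ell^2 \right) = r\bigl( \lambda \iso + \lambda' r^2 \bigr),
\]
with $r := \ell$, $\lambda := \mu$, $\lambda' := c/3$, and correspondingly $\Gamma = \lambda \iso + 3 \lambda' r^2$.

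The three cases of the theorem now arise from the position of the line $\{\ell = 0\}$ relative to $\iso$. If $c = 0$ then $\Gamma$ coincides projectively with $\iso$ and $f = \mu \ell \iso$, giving Case~1. If $c \neq 0$ and $\vec m' \cdot \vec m' \neq 0$ the line $\{\ell = 0\}$ meets $\iso$ at two distinct points $P_1, P_2$, and a Bezout count, tracking $\Gamma \cap \iso$ via the double line $\ell^2 = 0$ on $\iso$, produces $2 P_1 + 2 P_2$, so $\Gamma$ is bitangent to $\iso$ (Case~2). If $c \neq 0$ and $\vec m' \cdot \vec m' = 0$ the line is tangent to $\iso$ at $P$ and the same count gives $4P$, so $\Gamma$ hyperosculates $\iso$; a rescaling of the representative $r$ together with an overall rescaling of $f$ normalizes the pair $(\lambda, \lambda')$ and puts $f$ in the form $r(\iso - r^2)$ (Case~3). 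The converse implications are verified by a direct computation of $g_1, g_2, g_3$ from each listed form of $f$. The main obstacle I anticipate is proving the symmetric-centralizer statement uniformly across the semisimple and nilpotent regimes, and carefully tracking the intersection multiplicities that discriminate bitangency from hyperosculation.
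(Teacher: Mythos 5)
Your proposal is correct, and it takes a genuinely different and more structural route than the paper. The paper first shows (by a dimension count of linear systems) that four distinct eigenpoints on $\iso$ force $\iso \subseteq \Eig{f}$ and $f = \ell\,\iso$, then invokes B\'ezout to reduce $\Gamma \cap \iso$ to the tangent, bitangent, osculating and hyperosculating cases, and finally disposes of each case by a computer-assisted rank computation on the matrix of conditions of five points of a candidate pencil of conics. You instead push the Koszul analysis of \Cref{lemma:Koszul} one step further: from $g_i = \Gamma h_i$ with $h_i$ linear you extract $\nabla f = \Gamma\,\vec m' + L\,\vec x$, Euler gives $3f = \Gamma\ell + L\,\iso$, and the integrability condition $2[\vec m']_\times A = [\nabla L]_\times$ forces $A$ to lie in the symmetric centralizer of $[\vec m']_\times$, yielding the closed form $\Gamma = \lambda\,\iso + 3\mu\,\ell^2$ and $f = \ell(\lambda\,\iso + \mu\,\ell^2)$ in one stroke. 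This buys a uniform, computer-free proof in which the exclusion of simple tangency and osculation is automatic (the divisor $\Gamma\cap\iso$ is the divisor of $\ell^2$ on $\iso$, hence $2P_1+2P_2$ or $4P$), whereas the paper's argument buys concreteness at the price of case-by-case symbolic verification. I checked the key steps: the common quotient $L$ follows from coprimality of $x,y,z$ (note the sign of $h_2$ in the statement of \Cref{lemma:Koszul} should be $xm_1 - zm_3$, as its own proof and the minors of $\bigl(\begin{smallmatrix}x&y&z\\ m_3&m_2&m_1\end{smallmatrix}\bigr)$ show); the substitution $2[\vec m']_\times(\mu I + c\,\vec m'\vec m'^{\,T}) = 2\mu[\vec m']_\times$ correctly pins $L = 2\mu\ell$; and the centralizer claim is true, though your parenthetical only shows $\operatorname{span}(I,\vec m'\vec m'^{\,T})$ is \emph{contained} in the symmetric centralizer --- equality needs that $[\vec m']_\times$ is non-derogatory for every $\vec m'\neq 0$ (distinct eigenvalues $0,\pm i\sqrt{\vec m'\cdot\vec m'}$ in the semisimple case, a single Jordan block in the nilpotent one), after which the symmetric part of $\operatorname{span}(I,N,N^2)$ is visibly two-dimensional. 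Two degenerate cases deserve a sentence: $\vec m' = 0$ forces all $g_i = 0$ and hence $f = 0$; and $\lambda = \mu^{-1}c\,(\vec m'\cdot\vec m')$ type coincidences make $\Gamma$ a pair of lines, which is exactly the situation of \Cref{lemma:twoTangentsCiso} and is still covered by the bitangent bullet.
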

\begin{proof} Suppose $P_1, \dots, P_4$ are four distinct points on $\iso$ and let $C=V(f)$ be a cubic with $P_1, \dots, P_4\in \Eig{f}$. Then we claim that
$\iso\subseteq \Eig{f}$. Indeed, the linear system $\Lambda(\Phi(P_1, \dotsc, P_4))$ is two-dimensional and is contained in the linear system given by all the cubics of the form $\ell \iso$ (whose cubics have $\iso$ in the eigenscheme) and is also two dimensional, so they coincide. From this we have the first point. If $C$ is a cubic that contains in the eigenscheme a conic $\Gamma$ different from $\iso$, then $\Gamma$ and $\iso$ intersect in $4$ points. From the above result, the four points cannot be distinct, hence we have to consider four cases: $\Gamma$ is tangent to $\iso$ in one point, $\Gamma$ is bitangent to
$\iso$, $\Gamma$ is osculating $\iso$ and $\Gamma$ is iperosculating $\iso$. In all these cases we can assume that
$P_1 = (1: i: 0) \in \Gamma \cap \iso$ and that
the line $x+iy$ is the common tangent to $\Gamma$ and $\iso$ in $P_1$.
In the first case we consider the pencil of conics $\Gamma_s$ passing through $P_1, P_2, P_3\in \Gamma \cap \iso$ and tangent to $\iso$ in $P_1$ and
we take two distinct points $P_4, P_5$ on $\Gamma_s$. The matrix of conditions
$\Phi(P_1, \dotsc, P_5)$ must have rank $9$ or smaller, but the computations show that
this is not possible. In the second case, we take a generic point $P_2$ on $\iso$, we construct
the pencil of conics~$\Gamma_s$ which are bitangent to~$\iso$ in $P_1$ and $P_2$ and we take three other points $P_3, P_4, P_5$
on $\Gamma_s$. Again we check when the matrix of conditions $\Phi(P_1, \dotsc, P_5)$ has rank $9$ (or less). In this case we get that there is only one solution, which is given by the cubic
$V(r(\lambda \iso+\mu r^2))$. The remaining two cases are similar. The computational details can be find in \nb{06}{F5}.
\end{proof}
\begin{rmk}
    \Cref{lemma:twoTangentsCiso} can be seen as a particular case of the second item of the Theorem above.
\end{rmk}

\section{Possible configurations of the seven eigenpoints}
\label{further_alignments}

\begin{table}[ht]
\caption{All possible combinatorial configurations of seven points with at least one alignment and no six on a conic.}
\centering
\begin{tabular}{|clc|}\hline
  n. lines & collinear vertices & config.\\ \hline
 1& (1, 2, 3) &  $(C_1)$\\
 2& (1, 2, 3), (1, 4, 5) &  $(C_2)$\\
 3& (1, 2, 3), (1, 4, 5), (1, 6, 7) & $(C_3)$\\
 3& (1, 2, 3), (1, 4, 5), (2, 4, 6) & $(C_4)$\\
 4& (1, 2, 3), (1, 4, 5), (1, 6, 7), (2, 4, 6) & $(C_5)$\\
 4& (1, 2, 3), (1, 4, 5), (2, 4, 6), (3, 5, 6) & $(C_6)$\\
 5& (1, 2, 3), (1, 4, 5), (1, 6, 7), (2, 4, 6), (2, 5, 7)& $(C_7)$\\
 6& (1, 2, 3), (1, 4, 5), (1, 6, 7), (2, 4, 6), (2, 5, 7), (3, 4, 7)& $(C_8)$\\
 7& (1, 2, 3), (1, 4, 5), (1, 6, 7), (2, 4, 6), (2, 5, 7), (3, 4, 7), (3, 5, 6) &  $(C_9)$\\ \hline
\end{tabular}
\label{table:all_alignments}
\end{table}
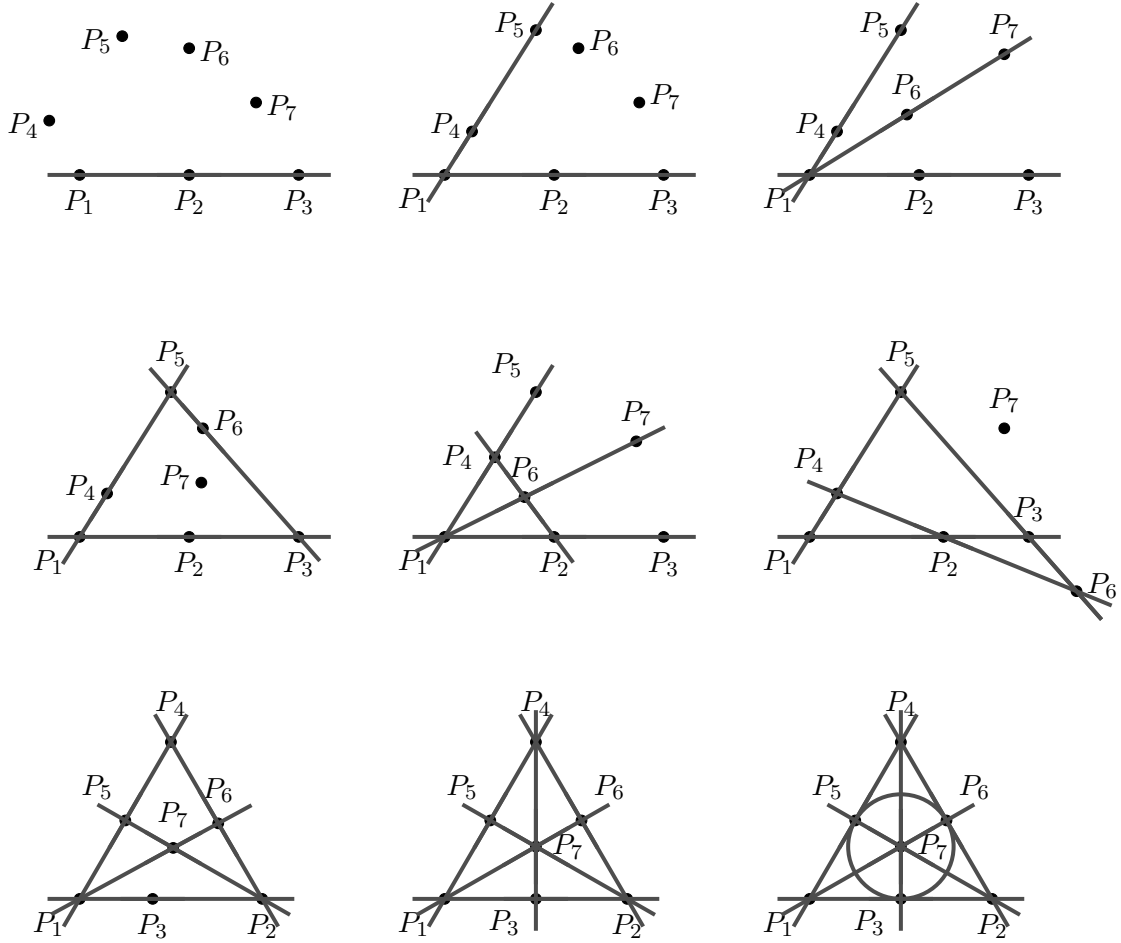
\begin{figure}[ht]
  \centering
  \begin{tikzpicture}
    \begin{scope}[scale=0.8]
    \begin{scope}
      \node[point, label={[label distance = -4pt, below]$P_1$}] (P1) at (0,0) {};
      \node[point, label={[label distance = -4pt, below]$P_3$}] (P3) at (3.6,0) {};
      \node[point, label={[label distance = -4pt, below]$P_2$}] (P2) at ($(P1)!0.5!(P3)$) {};
      \node[point, label={[label distance = -4pt, left]$P_4$}] (P4) at (-0.5, 0.9) {};
      \node[point, label={[label distance = -4pt, left]$P_5$}] (P5) at (0.7, 2.3) {};
      \node[point, label={[label distance = -4pt, right]$P_6$}] (P6) at (1.8, 2.1) {};
      \node[point, label={[label distance = -4pt, right]$P_7$}] (P7) at (2.9, 1.2) {};
      \draw[line, shorten <= -0.5cm, shorten >= -0.5cm] (P1) -- (P2);
      \draw[line, shorten <= -0.5cm, shorten >= -0.5cm] (P2) -- (P3);
    \end{scope}
    \begin{scope}[xshift=6cm]
      \node[point, label={[label distance = 0pt]210:$P_1$}] (P1) at (0,0) {};
      \node[point, label={[label distance = -4pt, below]$P_3$}] (P3) at (3.6,0) {};
      \node[point, label={[label distance = -4pt, below]$P_2$}] (P2) at ($(P1)!0.5!(P3)$) {};
      \node[point, label={[label distance = 0pt, left]$P_5$}] (P5) at (1.5, 2.4) {};
      \node[point, label={[label distance = 0pt, left]$P_4$}] (P4) at ($(P1)!0.3!(P5)$) {};
      \node[point, label={[label distance = 0pt, right]$P_6$}] (P6) at (2.2, 2.1) {};
      \node[point, label={[label distance = 0pt, right]$P_7$}] (P7) at (3.2, 1.2) {};
      \draw[line, shorten <= -0.5cm, shorten >= -0.5cm] (P1) -- (P2);
      \draw[line, shorten <= -0.5cm, shorten >= -0.5cm] (P2) -- (P3);
      \draw[line, shorten <= -0.5cm, shorten >= -0.5cm] (P1) -- (P4);
      \draw[line, shorten <= -0.5cm, shorten >= -0.5cm] (P4) -- (P5);
    \end{scope}
    \begin{scope}[xshift=12cm]
      \node[point, label={[label distance = 0pt]210:$P_1$}] (P1) at (0,0) {};
      \node[point, label={[label distance = -4pt, below]$P_3$}] (P3) at (3.6,0) {};
      \node[point, label={[label distance = -4pt, below]$P_2$}] (P2) at ($(P1)!0.5!(P3)$) {};
      \node[point, label={[label distance = 0pt, left]$P_5$}] (P5) at (1.5, 2.4) {};
      \node[point, label={[label distance = 0pt, left]$P_4$}] (P4) at ($(P1)!0.3!(P5)$) {};
      \node[point, label={[label distance = 0pt, above]$P_6$}] (P6) at (1.6, 1) {};
      \node[point, label={[label distance = 0pt, above]$P_7$}] (P7) at (3.2, 2) {};
      \draw[line, shorten <= -0.5cm, shorten >= -0.5cm] (P1) -- (P2);
      \draw[line, shorten <= -0.5cm, shorten >= -0.5cm] (P2) -- (P3);
      \draw[line, shorten <= -0.5cm, shorten >= -0.5cm] (P1) -- (P4);
      \draw[line, shorten <= -0.5cm, shorten >= -0.5cm] (P4) -- (P5);
      \draw[line, shorten <= -0.5cm, shorten >= -0.5cm] (P1) -- (P6);
      \draw[line, shorten <= -0.5cm, shorten >= -0.5cm] (P6) -- (P7);
    \end{scope}
    \begin{scope}[yshift=-6cm]
      \node[point, label={[label distance = 0pt]210:$P_1$}] (P1) at (0,0) {};
      \node[point, label={[label distance = -4pt, below]$P_3$}] (P3) at (3.6,0) {};
      \node[point, label={[label distance = -4pt, below]$P_2$}] (P2) at ($(P1)!0.5!(P3)$) {};
      \node[point, label={[label distance = 4pt]90:$P_5$}] (P5) at (1.5, 2.4) {};
      \node[point, label={[label distance = 0pt, left]$P_4$}] (P4) at ($(P1)!0.3!(P5)$) {};
      \node[point, label={[label distance = 0pt, right]$P_6$}] (P6) at ($(P5)!0.25!(P3)$) {};
      \node[point, label={[label distance = 0pt, left]$P_7$}] (P7) at (2, 0.9) {};
      \draw[line, shorten <= -0.5cm, shorten >= -0.5cm] (P1) -- (P2);
      \draw[line, shorten <= -0.5cm, shorten >= -0.5cm] (P2) -- (P3);
      \draw[line, shorten <= -0.5cm, shorten >= -0.5cm] (P1) -- (P4);
      \draw[line, shorten <= -0.5cm, shorten >= -0.5cm] (P4) -- (P5);
      \draw[line, shorten <= -0.5cm, shorten >= -0.5cm] (P3) -- (P6);
      \draw[line, shorten <= -0.5cm, shorten >= -0.5cm] (P6) -- (P5);
    \end{scope}
    \begin{scope}[xshift=6cm, yshift=-6cm]
      \node[point, label={[label distance = 0pt]210:$P_1$}] (P1) at (0,0) {};
      \node[point, label={[label distance = -4pt, below]$P_3$}] (P3) at (3.6,0) {};
      \node[point, label={[label distance = -4pt, below]$P_2$}] (P2) at ($(P1)!0.5!(P3)$) {};
      \node[point, label={[label distance = 0pt]120:$P_5$}] (P5) at (1.5, 2.4) {};
      \node[point, label={[label distance = 2pt]180:$P_4$}] (P4) at ($(P1)!0.55!(P5)$) {};
      \node[point, label={[label distance = 0pt]90:$P_6$}] (P6) at ($(P2)!0.5!(P4)$) {};
      \node[point, label={[label distance = 0pt, above]$P_7$}] (P7) at ($(P1)!2.4!(P6)$) {};
      \draw[line, shorten <= -0.5cm, shorten >= -0.5cm] (P1) -- (P2);
      \draw[line, shorten <= -0.5cm, shorten >= -0.5cm] (P2) -- (P3);
      \draw[line, shorten <= -0.5cm, shorten >= -0.5cm] (P1) -- (P4);
      \draw[line, shorten <= -0.5cm, shorten >= -0.5cm] (P4) -- (P5);
      \draw[line, shorten <= -0.5cm, shorten >= -0.5cm] (P1) -- (P6);
      \draw[line, shorten <= -0.5cm, shorten >= -0.5cm] (P6) -- (P7);
      \draw[line, shorten <= -0.5cm, shorten >= -0.5cm] (P2) -- (P6);
      \draw[line, shorten <= -0.5cm, shorten >= -0.5cm] (P6) -- (P4);
    \end{scope}
    \begin{scope}[xshift=12cm, yshift=-6cm]
      \node[point, label={[label distance = 0pt]210:$P_1$}] (P1) at (0,0) {};
      \node[point, label={[label distance = 0pt, above]$P_3$}] (P3) at (3.6,0) {};
      \node[point, label={[label distance = -4pt, below]$P_2$}] (P2) at (2.2,0) {};
      \node[point, label={[label distance = 4pt]90:$P_5$}] (P5) at (1.5, 2.4) {};
      \node[point, label={[label distance = 2pt]100:$P_4$}] (P4) at ($(P1)!0.3!(P5)$) {};
      \tkzInterLL(P3,P5)(P2,P4) \tkzGetPoint{P6}
      \node[point, label={[label distance = 0pt, right]$P_6$}] at (P6) {};
      \node[point, label={[label distance = 0pt, above]$P_7$}] (P7) at (3.2, 1.8) {};
      \draw[line, shorten <= -0.5cm, shorten >= -0.5cm] (P1) -- (P2);
      \draw[line, shorten <= -0.5cm, shorten >= -0.5cm] (P2) -- (P3);
      \draw[line, shorten <= -0.5cm, shorten >= -0.5cm] (P1) -- (P4);
      \draw[line, shorten <= -0.5cm, shorten >= -0.5cm] (P4) -- (P5);
      \draw[line, shorten <= -0.5cm, shorten >= -0.5cm] (P3) -- (P6);
      \draw[line, shorten <= -0.5cm, shorten >= -0.5cm] (P6) -- (P5);
      \draw[line, shorten <= -0.5cm, shorten >= -0.5cm] (P4) -- (P2);
      \draw[line, shorten <= -0.5cm, shorten >= -0.5cm] (P2) -- (P6);
    \end{scope}
    \begin{scope}[yshift=-12cm]
      \node[point, label={[label distance = 0pt]210:$P_1$}] (P1) at (0,0) {};
      \node[point, label={[label distance = -4pt, below]$P_2$}] (P2) at (3,0) {};
      \node[point, label={[label distance = -4pt, below]$P_3$}] (P3) at ($(P1)!0.4!(P2)$) {};
      \node[point, label={[label distance = 4pt]90:$P_4$}] (P4) at (1.5, 2.6) {};
      \node[point, label={[label distance = 2pt]100:$P_5$}] (P5) at ($(P1)!0.5!(P4)$) {};
      \node[point, label={[label distance = 2pt, above]:$P_7$}] (P7) at ($(P2)!0.65!(P5)$) {};
      \tkzInterLL(P1,P7)(P2,P4) \tkzGetPoint{P6}
      \node[point, label={[label distance = 2pt, above]$P_6$}] at (P6) {};
      \draw[line, shorten <= -0.5cm, shorten >= -0.5cm] (P1) -- (P3);
      \draw[line, shorten <= -0.5cm, shorten >= -0.5cm] (P3) -- (P2);
      \draw[line, shorten <= -0.5cm, shorten >= -0.5cm] (P1) -- (P5);
      \draw[line, shorten <= -0.5cm, shorten >= -0.5cm] (P5) -- (P4);
      \draw[line, shorten <= -0.5cm, shorten >= -0.5cm] (P2) -- (P6);
      \draw[line, shorten <= -0.5cm, shorten >= -0.5cm] (P6) -- (P4);
      \draw[line, shorten <= -0.5cm, shorten >= -0.5cm] (P5) -- (P7);
      \draw[line, shorten <= -0.5cm, shorten >= -0.5cm] (P7) -- (P2);
      \draw[line, shorten <= -0.5cm, shorten >= -0.5cm] (P1) -- (P7);
      \draw[line, shorten <= -0.5cm, shorten >= -0.5cm] (P7) -- (P6);
    \end{scope}
    \begin{scope}[xshift=6cm, yshift=-12cm]
      \node[point, label={[label distance = 0pt]210:$P_1$}] (P1) at (0,0) {};
      \node[point, label={[label distance = -4pt, below]$P_{2}$}] (P2) at (3,0) {};
      \node[point, label={[label distance = 0pt]195:$P_{3}$}] (P3) at ($(P1)!0.5!(P2)$) {};
      \node[point, label={[label distance = 4pt]90:$P_{4}$}] (P4) at (1.5, 2.6) {};
      \node[point, label={[label distance = 2pt]100:$P_{5}$}] (P5) at ($(P1)!0.5!(P4)$) {};
      \node[point, label={[label distance = 2pt]80:$P_6$}] (P6) at ($(P2)!0.5!(P4)$) {};
      \tkzInterLL(P2,P5)(P3,P4) \tkzGetPoint{P7}
      \node[point, label={[label distance = 0pt]0:$P_7$}] at (P7) {};
      \draw[line, shorten <= -0.5cm, shorten >= -0.5cm] (P1) -- (P3);
      \draw[line, shorten <= -0.5cm, shorten >= -0.5cm] (P3) -- (P2);
      \draw[line, shorten <= -0.5cm, shorten >= -0.5cm] (P1) -- (P5);
      \draw[line, shorten <= -0.5cm, shorten >= -0.5cm] (P5) -- (P4);
      \draw[line, shorten <= -0.5cm, shorten >= -0.5cm] (P2) -- (P6);
      \draw[line, shorten <= -0.5cm, shorten >= -0.5cm] (P6) -- (P4);
      \draw[line, shorten <= -0.5cm, shorten >= -0.5cm] (P5) -- (P7);
      \draw[line, shorten <= -0.5cm, shorten >= -0.5cm] (P7) -- (P2);
      \draw[line, shorten <= -0.5cm, shorten >= -0.5cm] (P4) -- (P7);
      \draw[line, shorten <= -0.5cm, shorten >= -0.5cm] (P7) -- (P3);
      \draw[line, shorten <= -0.5cm, shorten >= -0.5cm] (P1) -- (P7);
      \draw[line, shorten <= -0.5cm, shorten >= -0.5cm] (P7) -- (P6);
    \end{scope}
    \begin{scope}[xshift=12cm, yshift=-12cm]
      \node[point, label={[label distance = 0pt]210:$P_1$}] (P1) at (0,0) {};
      \node[point, label={[label distance = -4pt, below]$P_{2}$}] (P2) at (3,0) {};
      \node[point, label={[label distance = 0pt]195:$P_{3}$}] (P3) at ($(P1)!0.5!(P2)$) {};
      \node[point, label={[label distance = 4pt]90:$P_{4}$}] (P4) at (1.5, 2.6) {};
      \node[point, label={[label distance = 2pt]100:$P_{5}$}] (P5) at ($(P1)!0.5!(P4)$) {};
      \node[point, label={[label distance = 2pt]80:$P_6$}] (P6) at ($(P2)!0.5!(P4)$) {};
      \tkzInterLL(P2,P5)(P3,P4) \tkzGetPoint{P7}
      \node[point, label={[label distance = 0pt]0:$P_7$}] at (P7) {};
      \node[draw, line] at (P7) [circle through={(P3)}] {};
      \draw[line, shorten <= -0.5cm, shorten >= -0.5cm] (P1) -- (P3);
      \draw[line, shorten <= -0.5cm, shorten >= -0.5cm] (P3) -- (P2);
      \draw[line, shorten <= -0.5cm, shorten >= -0.5cm] (P1) -- (P5);
      \draw[line, shorten <= -0.5cm, shorten >= -0.5cm] (P5) -- (P4);
      \draw[line, shorten <= -0.5cm, shorten >= -0.5cm] (P2) -- (P6);
      \draw[line, shorten <= -0.5cm, shorten >= -0.5cm] (P6) -- (P4);
      \draw[line, shorten <= -0.5cm, shorten >= -0.5cm] (P5) -- (P7);
      \draw[line, shorten <= -0.5cm, shorten >= -0.5cm] (P7) -- (P2);
      \draw[line, shorten <= -0.5cm, shorten >= -0.5cm] (P4) -- (P7);
      \draw[line, shorten <= -0.5cm, shorten >= -0.5cm] (P7) -- (P3);
      \draw[line, shorten <= -0.5cm, shorten >= -0.5cm] (P1) -- (P7);
      \draw[line, shorten <= -0.5cm, shorten >= -0.5cm] (P7) -- (P6);
    \end{scope}
    \end{scope}
  \end{tikzpicture}
  \caption{Graphical representations of the $9$ cases of possible alignments of seven points as described in \Cref{table:all_alignments}.}
  \label{figure:all_alignments}
\end{figure}
In this section, we want to identify which of the nine configurations in \Cref{table:all_alignments} can be realized by
the seven eigenpoints of a regular ternary cubic polynomial and, if so, which are the cubics with that configuration of eigenpoints.

We say that the eigenpoints of a cubic curve are in a \emph{$(C_i)$
configuration} if they are aligned according to~$(C_i)$; we say
that the eigenpoints are in a \emph{strict $(C_i)$ configuration} if,
in addition, there are no further alignments among them.

First of all, it is well known that configuration~$(C_9)$ cannot be realized
by seven points of the plane over a field of zero
characteristic (see \cite{Whitney1935}), therefore we do not consider
it in our analysis.

\subsection*{Configuration~\texorpdfstring{$(C_1)$}{C1}}
This configuration can be realized. \Cref{proposition:three_distinct_ranks} and \Cref{locus_one_alignment}
give a description of the cubics with such a configuration of points:
we fix two points~$P_1$ and~$P_2$ in~$\p^2$, we take a generic~$P_3$
on the line~$P_1 \vee P_2$, all the cubics with
$P_1, P_2, P_3$ eigenpoints are given by $\Lambda \bigl( \Phi(P_1, P_2, P_3) \bigr)$, the three
dimensional linear subspace of~$\p^9$ (four dimensional, if the
line~$P_1 \vee P_2$ is tangent in~$P_1$, $P_2$, or~$P_3$ to~$\iso$). For generic choices of the points, the cubics have the eigenpoints in a strict
$(C_1)$ configuration.

\subsection*{Configuration~\texorpdfstring{$(C_2)$}{C2}}
In this case, the points
$P_1, \dots, P_5$ are in a
$V$- configuration, so the rank of~$\Phi(P_1, \dotsc, P_5)$
must be~$9$ or~$8$. If the rank is~$9$, then $\delta_1(P_1, P_2, P_4) = 0$
or $\delta_2(P_1, \dotsc, P_5) = 0$, see \Cref{theorem:rank_V}.
From \Cref{proposition:third_alignment}, the only case
which admits a strict $(C_2)$ configuration
is $\delta_1(P_1, P_2, P_4) = 0$. Therefore, if we fix two points~$P_1$ and~$P_2$
in the plane in an arbitrary way, from \Cref{lemma_delta_case1} we can choose $P_4$ so that
$\scl{P_4}{s_{11}\, P_2 - s_{12} \, P_1}=0$, then we fix any~$P_3$
on the line~$P_1 \vee P_2$ and any~$P_5$ on the line~$P_1 \vee P_4$; in this way, we get a cubic with a configuration of type~$(C_2)$, which is generally strict. If the rank is~$8$, configuration~$(C_2)$ can only be obtained from \Cref{rk8_1}, hence we get sub-cases of the case $\delta_1(P_1, P_2, P_4)=0$, see~\Cref{remark:particular_cases}.

\subsection*{Configuration~\texorpdfstring{$(C_3)$}{C3}}
If we have the alignments $(P_1, P_2, P_3)$, $(P_1, P_4, P_5)$ and $(P_1, P_6, P_7)$, then
\[
 (P_1, P_2, P_3, P_4, P_5) \,, \quad (P_1, P_2, P_3, P_6, P_7) \,, \quad (P_1, P_4, P_5, P_6, P_7)
\]
are three $V$- configurations. It holds:

\begin{lemma}
\label{lemma:no_delta1_delta1}
Suppose we have seven eigenpoints $P_1, \dots, P_7$
of a cubic in configuration $(C_3)$. Then among the $7$ points there is a
$V$- configuration that satisfies a $\delta_2 = 0$ condition.
\end{lemma}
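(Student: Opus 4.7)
My plan is to consider the three $V$-configurations that naturally sit inside $(C_3)$ and apply \Cref{theorem:rank_V} to each of them. Since the three alignments $(P_1,P_2,P_3)$, $(P_1,P_4,P_5)$, $(P_1,P_6,P_7)$ all share the point $P_1$ and \Cref{lemma:no_4_aligned} forbids four collinear eigenpoints in a regular eigenscheme, the three lines $P_1\vee P_j$ (for $j\in\{2,4,6\}$) are pairwise distinct, and none of the triples of the form $\{P_1,P_i,P_j\}$ with $i\in\{2,3\}$ and $j\in\{4,5,6,7\}$ (or their symmetric variants) can be aligned. Hence the five-tuples
\[
  (P_1,P_2,P_3,P_4,P_5),\quad (P_1,P_2,P_3,P_6,P_7),\quad (P_1,P_4,P_5,P_6,P_7)
\]
are all $V$-configurations in the sense of \Cref{definition:Vconf}.

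Since the seven points are eigenpoints of one and the same cubic $f$, the coefficient vector $w_f$ lies in the kernel of $\Phi(P_1,\dots,P_7)$, so $\rk\,\Phi(P_1,\dots,P_7)\le 9$. As the rank of a matrix cannot grow upon restricting to a subset of rows, each of the three associated five-point submatrices has rank at most~$9$. Applying \Cref{theorem:rank_V}(2) to each $V$-configuration, I obtain
\[
  \delta_1(P_1,P_2,P_4)\cdot \delta_2(P_1,P_2,P_3,P_4,P_5)=0,
\]
together with the two analogous equations for the other two configurations. I would argue by contradiction: if $\delta_2$ failed to vanish for any of the three configurations, then the three $\delta_1$'s would vanish simultaneously, i.e.,
\[
  \delta_1(P_1,P_2,P_4)=\delta_1(P_1,P_2,P_6)=\delta_1(P_1,P_4,P_6)=0.
\]

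The key step is to read these vanishings geometrically via the identity $\delta_1(P_1,P_i,P_j)=\langle P_1\times P_i,\,P_1\times P_j\rangle$ from \Cref{definition:delta1}. The cross products $P_1\times P_j$, for $j\in\{2,4,6\}$, are homogeneous coordinate triples for the three distinct lines $P_1\vee P_j$, hence they determine three distinct points on the polar line $\ell_{P_1}$ of $P_1$ with respect to $\iso$. The stated vanishings assert that these three points are pairwise orthogonal with respect to the restriction of the form $\iso$ to $\ell_{P_1}$. This restriction is a quadratic form on a two-dimensional vector space; it is non-degenerate when $P_1\notin\iso$, while it has rank~$1$ when $P_1\in\iso$ (its kernel being spanned by $P_1$ itself, since in that case $\ell_{P_1}$ is the tangent to $\iso$ at $P_1$).

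The main obstacle---essentially a small linear-algebra observation---is excluding three distinct pairwise orthogonal points in $\p^1$ in both of these rank regimes. In the non-degenerate case the orthogonal complement of any $1$-dimensional subspace is unique, so pairwise orthogonality among three distinct points is impossible; in the rank-$1$ case pairwise orthogonality forces all but one of the three to lie in the $1$-dimensional kernel, again collapsing two of them. Either possibility contradicts the distinctness of the three lines through $P_1$, so at least one of the three $V$-configurations must satisfy $\delta_2=0$, as claimed.
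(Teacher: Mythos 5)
Your proof is correct, and it diverges from the paper's in the decisive step. Both arguments start the same way: the three five-tuples sharing $P_1$ are $V$- configurations, the rank bound forces $\delta_1 \cdot \delta_2 = 0$ for each via \Cref{theorem:rank_V}, and one assumes for contradiction that all three $\delta_1$'s vanish. From there the paper proceeds by treating the conditions $\delta_1(P_1,P_2,P_4)=\delta_1(P_1,P_2,P_6)=0$ as a linear system in the coordinates of $P_2$ (whose solution set always contains $P_1$, forcing the system to be degenerate and hence $P_1 \in \iso$), and then finishes the case $P_1 \in \iso$ with a symbolic computation in a notebook, concluding that some $\delta_2$ vanishes. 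You instead use the identity $\delta_1(P_1,P_i,P_j)=\scl{P_1\times P_i}{P_1\times P_j}$ from \Cref{definition:delta1} to reinterpret the three vanishings as pairwise orthogonality of the three distinct points $P_1\times P_2$, $P_1\times P_4$, $P_1\times P_6$ on the polar line $P_1^{\perp}$, and rule this out by elementary quadratic-form theory on a two-dimensional space in both the nondegenerate ($P_1\notin\iso$) and rank-one ($P_1\in\iso$) regimes; concretely, when $s_{11}=0$ the three conditions read $s_{12}s_{14}=s_{12}s_{16}=s_{14}s_{16}=0$, forcing two of $P_2,P_4,P_6$ onto the tangent line at $P_1$ and hence collapsing two of the three lines, against \Cref{lemma:no_4_aligned}. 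Your route buys a fully self-contained, computation-free proof and in fact establishes the slightly sharper fact that the three $\delta_1$ conditions are simultaneously unsatisfiable for a genuine $(C_3)$ configuration, whereas the paper's final step is delegated to a notebook; the paper's version, on the other hand, stays uniform with the computational methodology used throughout and records the intermediate fact that two vanishing $\delta_1$'s already force $P_1\in\iso$. The only point worth making explicit in your write-up is that replacing $P_3$ by $P_2$ (and $P_5$ by $P_4$, etc.) in the $\delta_1$'s is harmless, since $P_1\times P_3$ is proportional to $P_1\times P_2$; this is why the three conditions coming from \Cref{theorem:rank_V} are exactly the three pairwise products you use.
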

\begin{proof}
The points $P_1, P_2, P_3, P_4, P_5$ are in a $V$- configuration.
If $\rk \, \Phi(P_1, \dots, P_5) = 8$, the result follows from~\Cref{rank_8}.
Therefore, assume that the matrix $\Phi(P_1, \dots, P_5)$ has rank~$9$.
If $\delta_2(P_1, \dots, P_5) = 0$, the statement holds;
otherwise, $\delta_1(P_1, P_2, P_4) = 0$.
Then consider the $V$- configuration $P_1$, $P_2$, $P_3$, $P_6$, $P_7$.
As above, we can suppose $\delta_1(P_1, P_2, P_6) = 0$.
These two equations are
linear in the coordinates of~$P_2$.
If the matrix of the associated linear system has
maximal rank, the unique solution gives a point~$P_2$ which coincides, as a projective point, to~$P_1$, which is impossible.
Thus the matrix does not have maximal rank.
This condition implies that $P_1$ is on the
isotropic conic. As usual, we can assume
$P_1 = (1: \iii: 0)$ and again
we can determine~$P_2$ so that $\delta_1(P_1, P_2, P_4)$ and
$\delta_1(P_1, P_2, P_6)$ are zero. The matrix $\Phi(P_1, P_4, P_5, P_6, P_7)$
must have rank~$9$ or smaller; then either
$\delta_2(P_1, P_4, P_5, P_6, P_7)=0$ or $\delta_1(P_1, P_4, P_6) = 0$. In
the first case, we have a $\delta_2$ condition among the points, hence
we assume $\delta_1(P_1, P_4, P_6) = 0$. Analyzing this equation, we get that either
$\delta_2(P_1, P_2, P_3, P_6, P_7) = 0$ or
$\delta_2(P_1, P_2, P_3, P_4, P_5) = 0$.
See  \nb{07}{F1} for the details.
\end{proof}

As a consequence of \Cref{lemma:no_delta1_delta1}, configuration $(C_3)$ can
be obtained only if we have $5$ points such that
$\delta_2(P_1, \dotsc, P_5) = 0$. \Cref{proposition:definitionP3} describes how to
obtain five eigenpoints in a $(C_3)$ configuration of eigenpoints; observe that when $P_3$ is defined by \Cref{proposition:definitionP3}, Item~(4), the configuration is strict.

\subsection*{Configuration~\texorpdfstring{$(C_4)$}{C4}}
It holds:
\begin{prop}
If the eigenpoints of a cubic are in a $(C_4)$ configuration, then they actually are in a $(C_8)$ configuration.
\end{prop}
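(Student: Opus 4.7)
The plan is to exploit the fact that the $(C_4)$ configuration contains \emph{three} overlapping $V$-configurations on the seven eigenpoints: one with vertex $P_1$ using the lines $(1,2,3)$ and $(1,4,5)$, one with vertex $P_2$ using the lines $(2,1,3)$ and $(2,4,6)$, and one with vertex $P_4$ using the lines $(4,1,5)$ and $(4,2,6)$. If each of these three $V$-configurations happens to fall into the rank-$9$, $\delta_2 = 0$ case of \Cref{theorem:rank_V}, then three applications of \Cref{proposition:third_alignment} immediately give the three missing alignments: $(P_1, P_6, P_7)$ from the $V$ at $P_1$, $(P_2, P_5, P_7)$ from the $V$ at $P_2$, and $(P_3, P_4, P_7)$ from the $V$ at $P_4$, which together with the original three yield exactly configuration $(C_8)$.

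Granted this reduction, I would proceed by a direct symbolic verification. Using the $\SO_3(\C)$-invariance of the eigenscheme (\Cref{proposition:two_orbits}), normalize $P_1 = (1:0:0)$, take $P_2$ and $P_4$ as arbitrary points in general position, and write $P_3 = u_1 P_1 + u_2 P_2$, $P_5 = v_1 P_1 + v_2 P_4$, $P_6 = w_1 P_2 + w_2 P_4$. The matrix $\Phi(P_1,\dotsc,P_6)$ can be shown to have rank at most~$9$: the three $V$-subconfigurations drop the ranks of their respective $5\times$-subblocks via \Cref{theorem:rank_V}, and combining these relations forces the global rank bound. Then \Cref{lemma:construct_cubic} and \Cref{proposition:geiser1} supply a cubic $f$ admitting $P_1,\dotsc,P_6$ as eigenpoints and the explicit generators of $I_{\Eig{f}}$; saturating against the six known points isolates $P_7$ as an explicit rational expression in the parameters. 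The three missing alignments then reduce to the vanishing of three $3\times 3$ coordinate determinants, a routine ideal-membership check.

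To separate the proof-of-concept from the case work: the case $P_1 \in \iso$ must be handled with the alternative orbit representative $P_1 = (1:\iii:0)$, and one must verify the claim in each of the branches of \Cref{theorem:rank_V} applied to the three $V$-configurations—namely that the $\delta_1 = 0$ alternative and the two rank-$8$ alternatives either coincide with the rank-$9$/$\delta_2 = 0$ conclusion on the locus where all three $V$-configurations coexist, or force the eigenscheme into a strictly more symmetric configuration (e.g.\ a $(C_5)$, $(C_6)$, or $(C_8)$ case already, via \Cref{proposition:three_d_three_alignments}, \Cref{proposition:d2_6align}, and \Cref{proposition:rk8_2B}), all of which satisfy the stronger $(C_8)$ alignment pattern. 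These degenerations are handled by primary decomposition of the relevant ideals.

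The main obstacle is this case analysis: while each individual $V$-configuration can \emph{a priori} satisfy any of the alternatives in \Cref{theorem:rank_V}, the extra alignment $(P_2, P_4, P_6)$ couples the three $V$-configurations and is what ultimately forces all of them into the ``clean'' $\delta_2 = 0$ branch. Verifying this coupling rigorously appears to require a dedicated notebook computation in the spirit of those appearing in Sections~\ref{rank_9} and~\ref{rank_8}, checking by saturation that the ideal cut out by the $(C_4)$-alignment equations, the rank-$\le 9$ minors of $\Phi(P_1,\dotsc,P_6)$, and the distinctness of the seven eigenpoints is contained in the ideal of the three collinearity conditions defining $(C_8)$.
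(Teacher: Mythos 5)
Your proposal takes essentially the same route as the paper: the paper likewise decomposes the $(C_4)$ alignments into the three $V$-configurations with vertices $P_1$, $P_2$, $P_4$, rules out the rank-$8$ alternatives via the results of \Cref{rank_8}, and reduces to the case $\delta_1(P_1,P_2,P_4)=\delta_1(P_2,P_1,P_4)=\delta_1(P_4,P_1,P_2)=0$, which a symbolic computation (Notebook \nb{07}{F2}) shows forces the three conditions $\delta_2(P_1,P_2,P_3,P_4,P_5)=\delta_2(P_2,P_1,P_3,P_4,P_6)=\delta_2(P_4,P_1,P_5,P_2,P_6)=0$ and hence, by three applications of \Cref{proposition:third_alignment}, exactly the extra alignments $(1,6,7)$, $(2,5,7)$, $(3,4,7)$ of $(C_8)$ that you identify. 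The ``coupling'' you single out as the main obstacle is precisely the content of that notebook, so the proposal matches the paper's argument in both structure and in where the computational burden lies.
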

\begin{proof}
From the results of \Cref{rank_8}, we know that $(C_4)$
cannot be obtained from a rank $8$ $V$- configuration,
hence it remains to consider the case $\delta_1(P_1, P_2, P_4) = 0$,
$\delta_1(P_2, P_1, P_4) = 0$ and $\delta_1(P_4, P_1, P_2) = 0$,
which implies
\[
  \delta_2(P_1, P_2, P_3, P_4, P_5) = 0 \,, \quad
  \delta_2(P_2, P_1, P_3, P_4, P_6) = 0 \,, \quad
  \delta_2(P_4, P_1, P_5, P_2, P_6) = 0 \,,
\]
which gives rise to $(C_8)$ configuration. The computations are available in \nb{07}{F2}.
\end{proof}

\subsection*{Configuration~\texorpdfstring{$(C_5)$}{C5}}
\begin{prop}
\label{proposition:condition_5}
Suppose we have $7$ points which are eigenpoints of a cubic and are in a strict~$(C_5)$ configuration.
Then it holds
\[
  P_1 = P_2 \times P_4
\]
and among the points $P_1, \dots, P_6$ we have the relation
\begin{equation}
\label{eq:condition_C5}
  s_{26}(s_{45}s_{13}-s_{34}s_{15})+s_{46}(s_{25}s_{13}-s_{23}s_{15}) = 0 \,.
\end{equation}
Moreover, the points~$P_6$ and~$P_7$ can be determined as follows:
\begin{equation}
\label{eq:p6formula}
\begin{multlined}
  P_6 = (s_{15}s_{24}s_{34}+s_{15}s_{23}s_{44} -s_{13}s_{25}s_{44} -s_{13}s_{24}s_{45}) \, P_2 \\ + (s_{13}s_{24}s_{25}-2s_{15}s_{22}s_{34}+s_{13}s_{22}s_{45}) \, P_4 \,,
\end{multlined}
\end{equation}
\begin{equation}
\label{eq:p7formula}
P_7 = (s_{26}s_{15}s_{46}+s_{24}s_{15}s_{66})P_1 + s_{11}(s_{26}s_{45}+s_{24}s_{56})P_6\,.
\end{equation}
\end{prop}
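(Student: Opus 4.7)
The strategy is first to locate the strict $(C_5)$ inside the case analysis of \Cref{proposition:definitionP3}, showing that it forces $s_{12}=s_{14}=0$; only then would I compute $P_6,P_7$ and the compatibility relation explicitly. Since the three alignments at $P_1$ form a $(C_3)$ sub-configuration, \Cref{lemma:no_delta1_delta1} gives, after relabeling of the pairs $\{(P_2,P_3),(P_4,P_5),(P_6,P_7)\}$, that $\delta_2(P_1,P_2,P_3,P_4,P_5)=0$, and \Cref{proposition:definitionP3} then leaves four sub-cases. The rank-$8$ scenarios of \Cref{theorem:rank_V} either end in $P_1=P_2\times P_4$ or contradict strictness: condition \eqref{rk8_1} combined with the extra alignment $(P_2,P_4,P_6)$ is exactly the setup of \Cref{proposition:d2_6align} and would force the six further alignments of $(C_8)$; condition \eqref{rk8_2} (equivalently case~(3) of \Cref{proposition:definitionP3}) falls into the first sub-family of \Cref{proposition:rk8_2B}, where $P_2$ and $P_4$ are the tangency points of the two tangents from $P_1$ to $\iso$, so by \Cref{proposition:sigma_tangency} one has $\langle P_1,P_2\rangle=\langle P_1,P_4\rangle=0$, that is, $P_1=P_2\times P_4$.

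For the remaining rank-$9$ cases I would parameterize $P_3=u_1P_1+u_2P_2$ and $P_5=v_1P_1+v_2P_4$, determine the unique cubic $f$ through $P_1,\dotsc,P_5$ via \Cref{lemma:construct_cubic}, and invoke \Cref{proposition:G_split}: the form $A_1g_3-B_1g_2+C_1g_1$ splits as the product of the three lines $P_1\vee P_2$, $P_1\vee P_4$ and $P_1\vee P_6\vee P_7$, so extracting the third factor and intersecting it with $g_1=0$ (using \Cref{proposition:geiser1}) realizes $P_6,P_7$ as rational functions of the data. The extra alignment of a strict $(C_5)$, namely $P_6\in P_2\vee P_4$, then becomes a single polynomial equation in those data; after saturation by the genericity hypotheses (distinctness, $P_1\notin\iso$, line $P_2\vee P_4$ not through $P_1$), this equation factors as $s_{12}\cdot s_{14}\cdot\Xi=0$, where the residual factor $\Xi$ vanishes only in cases already disposed of above. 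The symmetric role of $P_2$ and $P_4$ in the configuration excludes the asymmetric possibility $s_{12}=0,\,s_{14}\neq 0$ (and its swap), yielding $P_1=P_2\times P_4$ as claimed.

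Granted $s_{12}=s_{14}=0$, the auxiliary identities $s_{13}=u_1s_{11}$, $s_{15}=v_1s_{11}$, $s_{23}=u_2s_{22}$, $s_{25}=v_2s_{24}$, $s_{34}=u_2s_{24}$, $s_{45}=v_2s_{44}$ are immediate from the parameterization; substituting into the determinantal formulas of \Cref{lemma:construct_cubic} and \Cref{proposition:geiser1} produces \eqref{eq:p6formula} for $P_6\in P_2\vee P_4$, and \eqref{eq:p7formula} for $P_7\in P_1\vee P_6$ follows in the same way. Finally, relation \eqref{eq:condition_C5} is the compatibility identity that the expression for $P_6$ as a combination of $P_2,P_4$ coincides with the one obtained by running the same splitting argument for the $V$-configuration at $P_2$, namely $(P_2;P_1,P_3;P_4,P_6)$. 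The main obstacle is the case-analysis of the previous paragraph — specifically, verifying that the residual factor $\Xi$ always signals a previously-handled degeneration without excluding genuine strict $(C_5)$ configurations — which is most cleanly carried out as a symbolic computation in a companion notebook, in the style of \nb{04}{F7} and \nb{07}{F2}.
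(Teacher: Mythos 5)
Your overall plan (reduce to $s_{12}=s_{14}=0$, then extract $P_6$, $P_7$ and the compatibility relation by symbolic computation) has the right shape, and your treatment of the rank-$8$ cases via \Cref{proposition:d2_6align} and \Cref{proposition:rk8_2B} is consistent with the paper. But the decisive step --- proving $P_1=P_2\times P_4$ in the main rank-$9$ case --- has a genuine gap. You reduce the extra alignment $P_6\in P_2\vee P_4$ to an unverified factorization $s_{12}\cdot s_{14}\cdot\Xi=0$ and then invoke ``the symmetric role of $P_2$ and $P_4$'' to exclude $s_{12}=0$, $s_{14}\neq 0$. Symmetry cannot do this: the swap $P_2\leftrightarrow P_4$, $P_3\leftrightarrow P_5$ sends a hypothetical configuration with $s_{12}=0\neq s_{14}$ to one with $s_{14}=0\neq s_{12}$, so invariance of the family under the swap is perfectly compatible with such asymmetric configurations existing; it only tells you they come in pairs. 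Even granting the factorization and the exclusion of $\Xi=0$, your argument yields ``$s_{12}=0$ or $s_{14}=0$'', not both, and the conclusion $P_1=P_2\times P_4$ does not follow.

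What is missing is the structural input that the paper actually uses: the extra alignment $(P_2,P_4,P_6)$ creates further $V$-configurations centered at $P_2$, $P_4$ and $P_6$ (for instance $(P_2;P_1,P_3;P_4,P_6)$). Strictness forces each of these to have a rank-$9$ matrix of conditions satisfying $\delta_1=0$ rather than $\delta_2=0$: a rank-$8$ situation or a $\delta_2=0$ condition at, say, $P_2$ would, by \Cref{rank_8} or \Cref{proposition:third_alignment}, align $P_2$ with the two remaining eigenpoints $P_5,P_7$, an alignment absent from $(C_5)$. This yields $\delta_1(P_2,P_1,P_4)=\delta_1(P_4,P_1,P_2)=\delta_1(P_6,P_1,P_2)=0$, and these three equations together with $\delta_2(P_1,\dots,P_5)=0$ give $s_{12}=s_{14}=s_{16}=0$ simultaneously, with no appeal to symmetry. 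Your route to \Cref{eq:condition_C5} (as a compatibility between two splittings) also differs from the paper, which obtains it as the generator of the saturated ideal of order-$10$ minors of $\Phi(P_1,\dots,P_6)$, i.e.\ as the condition for $P_6$ to be an eigenpoint; the subsequent extraction of \Cref{eq:p6formula} and \Cref{eq:p7formula} by substituting $w_1P_2+w_2P_4$ and then exchanging $P_3$ with $P_7$ matches, but all of this is only available once $P_1=P_2\times P_4$ has been correctly established.
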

\begin{proof}
The points $(P_2, P_1, P_3, P_4, P_6)$ are in a
$V$- configuration. If it is of rank $8$,
from \Cref{rank_8} the seven points are in a $(C_3)$ configuration
with a collinearity $P_2, P_3,P_5$; by assumption, this is not the case. So
we can assume that $\rk \, \Phi(P_1, P_2, P_3, P_4, P_6) = 9$, hence $\delta_1(P_2, P_1, P_4)=0$.
Arguing similarly, we get $\delta_1(P_4, P_1, P_2) = 0$ and $\delta_1(P_6, P_1, P_2) = 0$. Moreover, by \Cref{lemma:no_delta1_delta1} and by possibly relabelling the points, we may assume that $\delta_2(P_1, P_2, P_3, P_4, P_5)=0$. From this we get that
$s_{12} = s_{14}=s_{16}=0$. In particular $P_1 = P_2 \times P_4$.

To prove \Cref{eq:condition_C5},
we consider the matrix $M = \Phi(P_1, \dots, P_6)$;
for $P_6$ to be an eigenpoint,
all the order $10$-minors must be zero.
After suitable saturations, the ideal of such minors is principal and the generator gives \Cref{eq:condition_C5}.

If in \Cref{eq:condition_C5} in place of~$P_6$ we substitute $w_1 \, P_2 + w_2 \, P_4$,
we find~$w_1$ and~$w_2$, which give \Cref{eq:p6formula}.

Finally, to prove \Cref{eq:p7formula}, we change~$P_3$ with~$P_7$ in \Cref{eq:condition_C5} and we take into account that $P_7$ is collinear with~$P_1$ and~$P_6$. All the detailed computations can be found in \nb{07}{F3}.
\end{proof}
The converse of the above proposition is also true, more precisely:
\begin{prop}
   Suppose $P_1, P_2, P_3, P_4, P_5$ are in a $V$- configuration such that
\[
  P_1 = P_2 \times P_4 \,,
\]
and suppose there is a cubic that has $7$ eigenpoints, five of which are $P_1, \dotsc, P_5$. Then the remaining eigenpoints~$P_6$ and~$P_7$ are given by \Cref{eq:p6formula} and \Cref{eq:p7formula}. Therefore $(P_2, P_4, P_6)$ are aligned, so the points are in a $(C_5)$ configuration.

Furthermore, in the general case, the configuration is
strict, but
there are sub-cases in which $(P_2, P_5, P_7)$ or $(P_3, P_4, P_7)$ or $(P_3, P_5, P_7)$ are aligned. In all these cases the points are in a $(C_8)$ configuration.
\end{prop}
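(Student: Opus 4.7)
The plan is to combine the generic uniqueness of the cubic spanned by $\Lambda\bigl(\Phi(P_1,\dotsc,P_5)\bigr)$ with a direct computation of the two residual eigenpoints. I begin by observing that $P_1 = P_2 \times P_4$ forces $s_{12} = \scl{P_2\times P_4}{P_2} = 0$ and $s_{14}= \scl{P_2\times P_4}{P_4} = 0$, so by the very definition of $\delta_2$ one has $\delta_2(P_1,\dotsc,P_5) = s_{12}s_{13}s_{45} - s_{14}s_{15}s_{23} = 0$ for any choice of $P_3$ on $P_1\vee P_2$ and $P_5$ on $P_1\vee P_4$. By \Cref{theorem:rank_V} the rank of $\Phi(P_1,\dotsc,P_5)$ is therefore at most $9$, so a cubic as in the hypothesis exists. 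The generic such configuration has rank exactly $9$ (the strict rank-$8$ situations have been treated separately in \Cref{rank_8}, and they land in configuration $(C_3)$, which is assumed here to be absent), so by \Cref{lemma:construct_cubic} the cubic $f$ with $P_1,\dotsc,P_5\in\Eig{f}$ is uniquely determined.

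Next, \Cref{proposition:third_alignment} applies and yields that the two remaining eigenpoints $P_6, P_7$ are aligned with $P_1$. To identify $P_6$ concretely I exploit the $\SO_3(\C)$-action: since under the hypothesis $P_1 \notin \iso$ generically, I reduce to $P_1 = (1:0:0)$ (cf.~\Cref{proposition:two_orbits}), which forces $P_2$ and $P_4$ to have vanishing first coordinate. With this normalisation I write out the cubic $f$ via \Cref{lemma:construct_cubic}, form the ideal of~$\Eig{f}$ through the three determinantal generators of \Cref{proposition:geiser1}, and saturate by the ideal of $\{P_1,\dotsc,P_5\}$. The residual ideal, by \Cref{theorem:nonempty}, cuts out the two points $P_6,P_7$, and a symbolic check in the companion notebook shows that they are exactly the ones described by \Cref{eq:p6formula} and \Cref{eq:p7formula}.

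Given \Cref{eq:p6formula}, the alignment $(P_2, P_4, P_6)$ is now immediate: the line $P_2 \vee P_4$ has equation $\scl{P_1}{X} = 0$ (since $P_1 = P_2 \times P_4$), and $P_6 = \alpha P_2 + \beta P_4$ gives $\scl{P_1}{P_6} = \alpha s_{12} + \beta s_{14} = 0$. Hence the seven eigenpoints are in a $(C_5)$ configuration. For a generic choice of $P_3$ and $P_5$ on the respective lines, the formulas for $P_6$ and $P_7$ do not produce any further incidence, so the configuration is strict.

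Finally, for the three sub-cases leading to $(C_8)$, I impose in turn one of the alignments $(P_2,P_5,P_7)$, $(P_3,P_4,P_7)$, $(P_3,P_5,P_7)$ as a vanishing determinant condition on the coordinates of $P_3, P_5$ obtained from \Cref{eq:p6formula,eq:p7formula}; each such condition cuts out a codimension-one locus in the parameter space of $V$-configurations with $P_1 = P_2\times P_4$. On each of these loci a further symbolic verification shows that all six non-trivial triples of a $(C_8)$ diagram become aligned simultaneously. The main obstacle is carrying out the saturations cleanly — the naive Gröbner computation of the residual ideal is noisy, and one has to strip away the primary components supported on $P_1,\dotsc,P_5$ before the formulas for $P_6, P_7$ emerge — but once this is done, the $(C_5)$-to-$(C_8)$ bookkeeping reduces to the incidence lemma already used in \Cref{proposition:d2_6align} and \Cref{proposition:rk8_2B}.
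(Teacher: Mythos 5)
Your rank-$9$ analysis follows essentially the paper's route: $P_1=P_2\times P_4$ gives $s_{12}=s_{14}=0$, hence $\delta_2(P_1,\dotsc,P_5)=0$; \Cref{proposition:third_alignment} puts $P_6,P_7$ on a line through $P_1$; and a symbolic verification identifies them with the points of \Cref{eq:p6formula} and \Cref{eq:p7formula} (the paper checks that appending each candidate point to $\Phi(P_1,\dotsc,P_5)$ keeps the rank at $9$, whereas you saturate the eigenscheme ideal after normalising $P_1=(1:0:0)$ --- both are acceptable symbolic routes). Your derivation of the alignment $(P_2,P_4,P_6)$ and the discussion of the extra alignments leading to $(C_8)$ are likewise in the same spirit as the paper's.

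The genuine gap is your dismissal of the rank-$8$ case. You claim those situations ``land in configuration $(C_3)$, which is assumed here to be absent'', but the proposition makes no such assumption: its hypotheses are only that $P_1=P_2\times P_4$ and that some cubic has seven eigenpoints containing $P_1,\dotsc,P_5$. Moreover, the rank-$8$ case of type \Cref{rk8_2} is not merely compatible with your hypothesis --- it automatically satisfies it, since tangency of $P_1\vee P_2$ to $\iso$ at $P_2$ means $\sigma(P_1,P_2)=s_{11}s_{22}-s_{12}^2=0$ with $s_{22}=0$, forcing $s_{12}=0$, and likewise $s_{14}=0$. So this stratum genuinely occurs under the stated hypotheses and must be treated, which the paper does: it first excludes \Cref{rk8_1} (by \Cref{lemma_delta_case2}, $\overline{\delta}_1(P_1,P_2,P_3)=0$ together with $s_{12}=0$ would force $P_3=s_{11}s_{22}\,P_1=P_1$), then observes that in case \Cref{rk8_2} one has $s_{22}=s_{44}=s_{23}=s_{45}=0$, so \Cref{eq:p6formula} and \Cref{eq:p7formula} specialize to \Cref{eq:formulaeP6_P7}, and \Cref{proposition:rk8_2B} supplies the $(C_5)$ conclusion. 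A smaller instance of the same over-genericity appears in your normalisation $P_1=(1:0:0)$: since $s_{11}=\sigma(P_2,P_4)$, the case $P_1\in\iso$ (i.e.\ $P_2\vee P_4$ tangent to $\iso$) is excluded only ``generically'' in your argument, whereas a complete proof must either rule it out or treat it separately.
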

\begin{proof}
The condition $P_1 = P_2 \times P_4$ implies $s_{12}=s_{14}=0$,
so we have $\delta_2 (P_1,P_2,P_3,P_4,P_5)=0$.
First, we suppose that $\rk \, \Phi(P_1, \dotsc, P_5) = 9$.
It follows from \Cref{proposition:third_alignment} that the corresponding unique cubic having such $5$ points as eigenpoints has $P_6$ and~$P_7$ aligned with~$P_1$.
To prove that $P_6$ and $P_7$ are given by \Cref{eq:p6formula} and \Cref{eq:p7formula},
we check that the matrix of conditions of $P_1,\cdots, P_5$ and the point of
\eqref{eq:p6formula}, respectively of \eqref{eq:p7formula}, has rank $9$.

A random example shows that, in general, there are no further collinearities.
The condition that $(P_2, P_5, P_7)$ are aligned splits into two cases:
one forces the collinearity of $(P_3, P_4, P_7)$, the other imposes the collinearity of $(P_3, P_5, P_6)$,
hence we get two $(C_8)$ configurations. Similarly for the other two cases.

If $\rk \, \Phi(P_1, \dotsc, P_5) = 8$, the $V$- configuration cannot satisfy \Cref{rk8_1}. Indeed, we would have \Cref{lemma_delta_case2} and by $s_{12} = 0$, it would be $P_1 = P_3$. Thus,
from the results of \Cref{rank_8}, the lines $P_1 \vee P_2$ and $P_1 \vee P_4$ are tangent to the isotropic conic in~$P_2$ and~$P_4$. Thus
\[
s_{22}= s_{44}= s_{23}= s_{45}=0
\]
and \Cref{eq:p6formula} and \Cref{eq:p7formula} specialize to \Cref{eq:formulaeP6_P7}. Finally, \Cref{proposition:rk8_2B} shows that the points are in a $(C_5)$ configuration. Notebook \nb{07}{F3} contains the details of the computations.
\end{proof}

\begin{rmk}
\label{remark:C5rk8}
Since in a $(C_5)$ configuration we have
$P_1 = P_2 \times P_4$ and $(P_2, P_4, P_6)$ aligned, it follows that $s_{12} = 0$, $s_{14} = 0$ and $s_{16} = 0$.
Moreover, it is
immediate to verify that the line~$P_2 \vee P_4$ is orthogonal to the
lines~$P_1 \vee P_2$, $P_1 \vee P_4$, and~$P_1 \vee P_6$, that is
\[
  \scl{P_1 \times P_2}{P_2 \times P_4} = 0, \quad
  \scl{P_1 \times P_4}{P_2 \times P_4} = 0, \quad
  \scl{P_1 \times P_6}{P_2 \times P_4} = 0.
\]
Hence \Cref{remark:three_orthog} gives that the $(C_5)$ configuration obtained in \Cref{proposition:rk8_2B} is a sub-case of the $(C_5)$ configuration described here, since Equations \eqref{eq:p6formula} and \eqref{eq:p7formula} specialize to \eqref{eq:formulaeP6_P7}.
\end{rmk}

\subsection*{Configurations~\texorpdfstring{$(C_6)$}{C6} and \texorpdfstring{$(C_7)$}{C7}}

The strict configurations $(C_6)$ and~$(C_7)$ cannot be realized by eigenpoints.
Indeed, if we study the ideals given by the conditions
$\delta_1=0$ and~$\delta_2=0$ that must be satisfied,
we see that there are no compatible solutions.
See Notebooks \nb{07}{F4} and \nb{07}{F5}.

\subsection*{Configuration~\texorpdfstring{$(C_8)$}{C8}}
This configuration is realizable.
Observe that the points $P_1, P_2, P_4, P_7$ lie on three lines, while the points
$P_3, P_5, P_6$ lie on two lines.
The following lemma is easy to verify:

\begin{lemma}
\label{lemma:6ortog}
If we have four distinct points of the projective
plane, such that no triplets of them are collinear, then the scalar
product of at least two of them is not zero.
\end{lemma}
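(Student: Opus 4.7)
The plan is to argue by contradiction: assume that $s_{ij} = 0$ for every pair of distinct indices $i,j \in \{1,2,3,4\}$ among the four points $P_1, P_2, P_3, P_4$, and then derive that three of the points are collinear.

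First I would split into two cases depending on whether any $P_i$ lies off the isotropic conic $\iso$. Suppose some $P_i$ is not on $\iso$; after relabelling, say $s_{11} \neq 0$. The relations $s_{1j} = \scl{P_1}{P_j} = 0$ for $j = 2, 3, 4$ mean that $P_2$, $P_3$, and $P_4$ all lie on the polar line of $P_1$ with respect to $\iso$, namely the line of equation $A_1 x + B_1 y + C_1 z = 0$. Since $s_{11} \neq 0$, the point $P_1$ is not self-polar, so this polar line is a proper line in $\p^2$. Hence $P_2, P_3, P_4$ are collinear, contradicting the hypothesis.

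In the remaining case, every $P_i$ lies on $\iso$, so $s_{ii} = 0$ for all $i$. Pick any two distinct points, say $P_1$ and $P_2$. By assumption $s_{12} = 0$, and combining with $s_{11} = s_{22} = 0$ gives $\sigma(P_1, P_2) = s_{11} s_{22} - s_{12}^2 = 0$. By \Cref{proposition:sigma_tangency}, the line $P_1 \vee P_2$ is then tangent to $\iso$. But $P_1$ and $P_2$ are two distinct points of $\iso$ lying on that line, so the line is a secant of $\iso$, not a tangent — a contradiction.

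Since both cases are impossible, the assumption that all scalar products vanish fails, so at least one $s_{ij}$ with $i \neq j$ must be nonzero. I expect no serious obstacle: the only subtlety is remembering to invoke \Cref{proposition:sigma_tangency} to handle the case where all four points lie on~$\iso$, rather than trying to argue directly that no four distinct points of a smooth conic can be pairwise orthogonal.
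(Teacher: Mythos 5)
Your proof is correct. Note that the paper itself offers no argument here (it states the lemma as ``easy to verify''), so there is nothing to compare against; your write-up fills that gap validly. That said, your case distinction is unnecessary: the conditions $s_{12}=s_{13}=s_{14}=0$ say that $P_2$, $P_3$, $P_4$ all lie on the line $A_1x+B_1y+C_1z=0$, and this is a genuine line of $\p^2$ simply because $(A_1,B_1,C_1)\neq(0,0,0)$ --- whether or not $P_1$ lies on $\iso$ is irrelevant (your remark that $s_{11}\neq 0$ is needed to make the polar line ``proper'' is not right, but it is also not needed). So the first half of your argument already disposes of every case, and the second case (all four points on $\iso$, invoking \Cref{proposition:sigma_tangency} and the tangent-versus-secant contradiction) is correct but redundant.
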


Furthermore:

\begin{lemma}
\label{lemma:three_s_zero}
Suppose that $P_1, \dotsc, P_7$ are eigenpoints in a $(C_8)$ configuration.
We also assume that the matrix of conditions of any $V$- configuration contained in $P_1, \dotsc, P_7$ has rank~$9$.
Let $U, V, W$ be three points in the set $\{P_1, P_2, P_4, P_7\}$
such that $\scl{U}{V} = 0$ and $\scl{U}{W} = 0$.
Then $\scl{V}{W} = 0$.
\end{lemma}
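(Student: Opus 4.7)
The plan is to reduce to a representative case via the combinatorial symmetries of $(C_8)$, and then derive the desired orthogonality by combining the rank-$9$ constraints (\Cref{theorem:rank_V}) imposed on the twelve $V$-configurations contained in the seven eigenpoints. As a first observation, the incidence pattern of $(C_8)$ from \Cref{table:all_alignments} makes $\{P_1, P_2, P_4, P_7\}$ a complete quadrangle whose diagonal points are $\{P_3, P_5, P_6\}$, and the automorphism group of this incidence structure contains, for instance, the involutions $(P_2\,P_4)(P_3\,P_5)$ fixing $P_1, P_6, P_7$ and $(P_1\,P_2)(P_4\,P_7)$, together generating an $S_4$-action on $\{P_1, P_2, P_4, P_7\}$. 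Since $S_4$ is transitive on unordered triples and the stabilizer of any such triple acts as $S_3$ on its elements, it suffices to treat $U = P_1$, $V = P_2$, $W = P_4$, i.e.\ to show $s_{12} = s_{14} = 0 \Rightarrow s_{24} = 0$.

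Before invoking rank-$9$ conditions I would rule out $P_1 \in \iso$: if $s_{11} = 0$, the equations $s_{12} = s_{14} = 0$ place both $P_2, P_4$ on the polar of $P_1$ with respect to $\iso$, which for $P_1 \in \iso$ is the tangent line to $\iso$ at $P_1$ and therefore contains $P_1$; this would force $P_1, P_2, P_4$ to be collinear, contradicting the $(C_8)$ incidence pattern. Hence $s_{11} \neq 0$. Now \Cref{theorem:rank_V}(2) applied to the $V$-configuration $(P_1, P_2, P_3, P_4, P_5)$ gives
\[
\delta_1(P_1, P_2, P_4) \cdot \delta_2(P_1, P_2, P_3, P_4, P_5) = 0;
\]
under our hypothesis $\delta_1(P_1, P_2, P_4) = s_{11}s_{24}$, so the branch $\delta_1 = 0$ immediately yields $s_{24} = 0$. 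The entire difficulty is concentrated in the branch $\delta_1 \neq 0$, $\delta_2(P_1, P_2, P_3, P_4, P_5) = 0$, which is uninformative because the $\delta_2$ equation trivializes under $s_{12} = s_{14} = 0$ (it is case~(1) of \Cref{proposition:definitionP3}).

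To eliminate that branch I would combine the rank-$9$ disjunctions from the remaining $V$-configurations, in particular those with vertex different from $P_1$, such as $(P_2, P_4, P_6, P_5, P_7)$ and $(P_2, P_1, P_3, P_5, P_7)$ at $P_2$, $(P_4, P_2, P_6, P_3, P_7)$ at $P_4$, and the three $V$-configurations at $P_7$. Each yields either a new $\delta_1$-relation among the $s_{ij}$ (for instance $s_{22}s_{45} - s_{24}s_{25} = 0$ from $(P_2, P_4, P_6, P_5, P_7)$) or, via \Cref{proposition:definitionP3}, a geometric condition that either forces another $P_i$ onto $\iso$—again ruled out by a collinearity argument of the same type as above—or fixes one of $P_3, P_5, P_6$ as an explicit rational function of the four ``vertex'' points. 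Using the complete-quadrangle structure to express $P_3, P_5, P_6$ as intersections of lines through $P_1, P_2, P_4, P_7$ and normalizing $P_1 = (1{:}0{:}0)$ via the $\SO_3(\C)$-action of \Cref{proposition:two_orbits}, the accumulated equations become a polynomial system in the coordinates of $P_2, P_4, P_7$ whose primary decomposition (carried out symbolically, in the style of the notebooks used elsewhere in the paper) shows that $s_{24} \neq 0$ is incompatible with the full set of constraints. The main obstacle is precisely this combinatorial branching across the twelve $V$-configurations and the ``up to permutation'' subcases of \Cref{proposition:definitionP3}; this is the reason a computer-algebra verification, analogous to those underlying \Cref{proposition:condition_5} and \Cref{lemma:no_delta1_delta1}, is the natural way to conclude.
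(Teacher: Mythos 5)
Your overall strategy --- reduce to one representative triple via the symmetries of the complete quadrangle $\{P_1,P_2,P_4,P_7\}$, dispose of degenerate possibilities, and pin down the remaining orthogonality by combining rank-$9$ constraints with a symbolic elimination --- has the same general shape as the paper's proof, and your preliminary observations are correct: $P_1\notin\iso$ (else $P_1,P_2,P_4$ would be collinear), and the $V$-configuration at $P_1$ is uninformative because $s_{12}=s_{14}=0$ already forces $\delta_2(P_1,P_2,P_3,P_4,P_5)=0$. But the core of the argument is missing. You correctly identify the $\delta_1$-versus-$\delta_2$ branching across the $V$-configurations as ``the main obstacle'' and then do not overcome it: you assert that the accumulated system of constraints is incompatible with $s_{24}\neq 0$, without saying which disjunct is taken in which branch or why the computation closes --- note that disjunctions do not form a single polynomial system, so ``take the primary decomposition'' is not yet a well-posed instruction. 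The paper's proof rests on one structural observation, absent from your proposal, that eliminates the branching entirely: for the three $V$-configurations centered at the \emph{diagonal} points $P_3$, $P_5$, $P_6$ (e.g.\ $(P_3;P_1,P_2\,|\,P_4,P_7)$), the $\delta_2$ branch is impossible, because by \Cref{proposition:third_alignment} it would force the remaining two eigenpoints to be collinear with the vertex, i.e.\ it would create the seventh line $(P_3,P_5,P_6)$ and hence the unrealizable configuration $(C_9)$. This yields three \emph{unconditional} equations $\delta_1(P_3,P_1,P_4)=\delta_1(P_5,P_1,P_2)=\delta_1(P_6,P_1,P_2)=0$, and the ideal they generate together with the hypothesis $\scl{U}{V}=\scl{U}{W}=0$ saturates exactly to the claimed orthogonality; only that last elimination is delegated to the notebook. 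Your proposal instead draws its constraints from $V$-configurations centered at the quadrangle vertices $P_2,P_4,P_7$, where the third alignment through the vertex is already present in $(C_8)$, so the $\delta_2$ branch cannot be excluded and each such configuration genuinely contributes a disjunction.

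A secondary slip: the two involutions you name, $(P_2\,P_4)(P_3\,P_5)$ and $(P_1\,P_2)(P_4\,P_7)$, generate only a dihedral group of order $8$ on $\{P_1,P_2,P_4,P_7\}$, not $S_4$; this group is not transitive on ordered pairs (distinguished point, omitted point), so under it your representative $U=P_1$, $\{V,W\}=\{P_2,P_4\}$ is not equivalent to, say, $U=P_1$, $\{V,W\}=\{P_2,P_7\}$. The full automorphism group of the incidence structure is indeed $S_4$ (for instance $(P_1\,P_2)(P_5\,P_6)$ is also an automorphism), so the reduction is easily repaired, but as written it does not cover all twelve cases.
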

\begin{proof}
From the symmetries of the points, we can assume that $U=P_1$, $V=P_2$, $W=P_7$.
Then we have $P_1 = P_2 \times P_7$.
The matrix of conditions of the $V$- configuration
$(P_3, P_1, P_2, P_4, P_7)$
is of rank~$9$, hence $\delta_1(P_3, P_1, P_4) = 0$
(see~\Cref{theorem:rank_V} and~\Cref{proposition:third_alignment}).
Similarly, $\delta_1(P_5, P_1, P_2) = 0$ and $\delta_1(P_6, P_1, P_2) = 0$.
The ideal generated by such
$\delta_1$ conditions turns out,
after some saturations, to be generated by $s_{27}$.
More details on the file \nb{07}{F6}.
\end{proof}

\begin{prop}
\label{proposition:conf8_partA}
If $P_1, \dotsc, P_7$ are eigenpoints in a $(C_8)$ configuration, then~$s_{12}$ is not zero and it holds:
\begin{equation}
\label{eq:orthocenter}
P_7 = (P_1 \times P_2)s_{14}s_{24} -
  s_{12}(P_1 \times P_4)s_{24} + s_{12}s_{14}(P_2 \times P_4)
\end{equation}
and the lines $P_1 \vee P_2$, $P_1 \vee P_4$ and~$P_1 \vee P_6$
are orthogonal to, respectively, $P_3 \vee P_4$, $P_2 \vee P_5$ and
$P_2 \vee P_4$, up to a permutation of
the points. In particular, we have
\[
  \scl{P_1 \times P_2}{P_3 \times P_4} = 0, \quad
  \scl{P_1 \times P_4}{P_2 \times P_5} = 0, \quad
  \scl{P_1 \times P_6}{P_2 \times P_4} = 0 \,.
\]
\end{prop}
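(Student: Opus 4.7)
The plan is to reduce the statement to a combination of the structural constraints on scalar products imposed by \Cref{lemma:6ortog} and \Cref{lemma:three_s_zero} on the four distinguished vertex points $P_1, P_2, P_4, P_7$ of the $(C_8)$ configuration, the dichotomy of \Cref{theorem:rank_V} applied to the V-configurations naturally contained in $(C_8)$, and a symbolic verification of the coordinate expression \eqref{eq:orthocenter}. To start, observe that $(C_8)$ distinguishes the four points $P_1, P_2, P_4, P_7$ (each lying on three alignment lines) from $P_3, P_5, P_6$ (each lying on two), and a direct inspection of \Cref{table:all_alignments} shows that no three of the former four are collinear. By \Cref{lemma:6ortog} at least one scalar product $s_{ij}$ with $i, j \in \{1, 2, 4, 7\}$ is non-zero, and \Cref{lemma:three_s_zero} forces the orthogonality structure on these four vertices to be highly restricted. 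Since the combinatorial symmetry of $(C_8)$ acts transitively on unordered pairs in $\{P_1, P_2, P_4, P_7\}$ (with the ``foot'' points $P_3, P_5, P_6$ permuted accordingly), we may relabel so that $s_{12} \neq 0$, proving the first assertion.

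Next, I would extract the three orthogonalities from the V-configurations centered at $P_3$, $P_5$, $P_6$. Consider for instance the V-configuration $(P_3, P_1, P_2, P_4, P_7)$: $P_3$ is the vertex, $P_1, P_2$ lie on one arm, and $P_4, P_7$ on the other, since $P_3 \in P_4 \vee P_7$ by the alignment $(P_3, P_4, P_7)$ of $(C_8)$. By \Cref{theorem:rank_V} the rank of its matrix of conditions is at most $9$; the rank-$8$ cases, analyzed in \Cref{rank_8} and \Cref{remark:C5rk8}, would yield a $(C_3)$ or $(C_5)$ configuration, incompatible with the strict $(C_8)$ assumption. Hence the rank is $9$, and either $\delta_1(P_3, P_1, P_4) = 0$ or $\delta_2(P_3, P_1, P_2, P_4, P_7) = 0$; ruling out the latter via \Cref{proposition:definitionP3}---each of its four sub-cases either forces coincidences of points, forces additional alignments incompatible with $(C_8)$, or contradicts $s_{12} \neq 0$---leaves $\delta_1(P_3, P_1, P_4) = 0$, i.e.\ $\scl{P_3 \times P_1}{P_3 \times P_4} = 0$. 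Since $P_3 \in P_1 \vee P_2$, this is equivalent to $\scl{P_1 \times P_2}{P_3 \times P_4} = 0$. Analogous arguments applied at $P_5$ and $P_6$ yield $\scl{P_1 \times P_4}{P_2 \times P_5} = 0$ and $\scl{P_1 \times P_6}{P_2 \times P_4} = 0$, the remaining two stated orthogonalities.

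Finally, the expression for $P_7$ follows from the three collinearities $(P_1, P_6, P_7)$, $(P_2, P_5, P_7)$, $(P_3, P_4, P_7)$ together with the altitude orthogonalities just established: geometrically, $P_7$ is the orthocenter of the triangle $P_1 P_2 P_4$, with feet $P_3, P_5, P_6$. Expanding $P_7$ in the basis $\{P_1 \times P_2,\, P_1 \times P_4,\, P_2 \times P_4\}$ of $\C^3$ (a basis because $P_1, P_2, P_4$ are non-collinear) and imposing for instance $P_7 \in P_3 \vee P_4$ together with $\scl{P_1 \times P_2}{P_3 \times P_4} = 0$ determines the three coefficients; a short symbolic check recovers $s_{14}s_{24}$, $-s_{12}s_{24}$, and $s_{12}s_{14}$, as in \eqref{eq:orthocenter}. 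The main obstacle I anticipate is the elimination step in the second paragraph: each of the four alternatives of \Cref{proposition:definitionP3} requires a careful case-by-case check against the full alignment structure of $(C_8)$ together with the non-vanishing of $s_{12}$, and this is most cleanly discharged by a symbolic computation in the spirit of \nb{07}{F6}.
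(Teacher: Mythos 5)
Your strategy for the rank-$9$ case tracks the paper's proof closely, but there are two genuine gaps. The main one is your dismissal of the rank-$8$ case: you claim that a rank-$8$ $V$-subconfiguration would force a $(C_3)$ or $(C_5)$ configuration and is therefore incompatible with $(C_8)$. This is false. \Cref{proposition:d2_6align} produces seven eigenpoints with the six alignments $(1,2,3)$, $(1,4,5)$, $(2,4,6)$, $(2,5,7)$, $(3,4,7)$, $(3,5,6)$, and the second sub-family of \Cref{proposition:rk8_2B} produces the six alignments $(1,2,3)$, $(1,4,5)$, $(1,6,7)$, $(2,5,6)$, $(3,4,6)$, $(3,5,7)$; both are complete quadrilaterals (four points on three lines each, three points on two lines each), i.e.\ $(C_8)$ configurations up to relabelling, and both contain a $V$-configuration of rank~$8$. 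This is precisely why the paper's proof opens by disposing of the rank-$8$ case via the explicit constructions of \Cref{rank_8} and \Cref{remark:three_orthog}, where the orthogonality relations and the orthocenter-type formula are verified directly. Note also that \Cref{lemma:three_s_zero}, which you invoke in your first paragraph to get $s_{12}\neq 0$, explicitly assumes that every $V$-subconfiguration has rank~$9$, so your opening argument silently presupposes the case you have not handled.

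The second gap is in how you rule out the alternative $\delta_2(P_3,P_1,P_2,P_4,P_7)=0$ at each foot. You appeal to \Cref{proposition:definitionP3}, but its sub-case~(4) is merely the generic parametrization of the $\delta_2=0$ locus and produces no coincidence, no extra alignment, and no contradiction with $s_{12}\neq 0$; so the case analysis you sketch does not close. The correct tool is \Cref{proposition:third_alignment}: if $\delta_2=0$ with rank~$9$ and a regular eigenscheme, the two remaining eigenpoints are aligned with the vertex, which at $P_3$ would create the line $(P_3,P_5,P_6)$ and hence the non-realizable configuration $(C_9)$. Once both points are repaired, your derivation of \eqref{eq:orthocenter} from the three $\delta_1$ conditions is essentially the paper's; you should still add the observation that not both $s_{14}$ and $s_{24}$ can vanish (again by \Cref{lemma:three_s_zero}), since otherwise the right-hand side of \eqref{eq:orthocenter} would be the zero vector and would not define a point.
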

\begin{proof}
If some of the $V$- configurations contained in $(C_8)$ have matrices of conditions of rank~$8$,
the result follows from \Cref{rank_8} (see \Cref{remark:three_orthog}).

Assume now that each $V$- configurations of $(C_8)$ gives rise to a rank $9$ matrix of conditions.
As a consequence of the previous two lemmas, it is not possible
to have $s_{12}=s_{14}=s_{17}=0$, we can assume $s_{12} \not=0$.
Again from \Cref{lemma:three_s_zero}, it is not possible to
have $s_{14}=s_{24}=0$, so at least one of the three coefficients~$s_{14}s_{24}$, $s_{12}s_{24}$, or~$s_{12}s_{14}$ in \Cref{eq:orthocenter} is not zero. Since the
three vectors $P_1\times P_2$, $P_1\times P_4$, $P_2\times P_4$ are linearly independent, we see that $P_7$ of \eqref{eq:orthocenter} is well defined.

On the other hand, the eigenpoint conditions imply
$\delta_1(P_3, P_1, P_4) = 0$, $\delta_1(P_5, P_1, P_2) = 0$ and
$\delta_1(P_6, P_1, P_2)=0$. These three relations, up to non zero factors, can be expressed as
\begin{align*}
 e_1 &= s_{12}s_{47}-s_{17}s_{24} =0\,,\\
 e_2 &= s_{12}s_{47}-s_{14}s_{27} =0\,,\\
 e_3 &= s_{17}s_{24}-s_{14}s_{27} =0\,,
\end{align*}
hence $e_1-e_2+e_3 = 0$ and the system
$e_1=0, e_2 = 0$ is linear in the coordinates of the $P_7$.
Its solution determines~$P_7$ and it is easy to check that \Cref{eq:orthocenter} holds. The complete computations are in \nb{07}{F7}.
\end{proof}

We have also the converse, i.e.\
\begin{prop}
Suppose $P_1, P_2, P_4$ are three points, that \Cref{eq:orthocenter} defines a point $P_7$ and that
\[
P_3 = (P_1 \vee P_2) \cap(P_4 \vee P_7), \quad
P_5 = (P_1 \vee P_4) \cap (P_2 \vee P_7), \quad
P_6 = (P_1 \vee P_7) \cap (P_2 \vee P_4).
\]
Then the points $P_1, \dotsc, P_7$ are in a $(C_8)$ configuration
and are eigenpoints of a unique cubic
of the plane. In particular:
\[
  \scl{P_1 \times P_2}{P_3 \times P_4} = 0 \,, \quad
  \scl{P_1 \times P_4}{P_2 \times P_5} = 0 \,, \quad
  \scl{P_1 \times P_6}{P_2 \times P_4} = 0 \,.
\]
\end{prop}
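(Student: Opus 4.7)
The plan is to prove the three assertions of the statement in sequence: the combinatorial $(C_8)$ configuration, the three orthogonality relations, and the existence and uniqueness of the cubic having $P_1, \dotsc, P_7$ as eigenpoints.

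First I would verify the combinatorial configuration, which follows directly from the construction. By definition, $P_3$ lies on both $P_1 \vee P_2$ and $P_4 \vee P_7$, yielding the alignments $(P_1, P_2, P_3)$ and $(P_3, P_4, P_7)$; analogously, $P_5 \in (P_1 \vee P_4) \cap (P_2 \vee P_7)$ gives $(P_1, P_4, P_5)$ and $(P_2, P_5, P_7)$, and $P_6 \in (P_1 \vee P_7) \cap (P_2 \vee P_4)$ gives $(P_1, P_6, P_7)$ and $(P_2, P_4, P_6)$. This accounts for all six alignments of $(C_8)$.

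For the orthogonalities, the plan is to use the alignments to reduce each relation to a condition involving $P_7$ and then exploit the explicit formula \Cref{eq:orthocenter}. For instance, since $P_6 \in P_1 \vee P_7$, the vectors $P_1 \times P_6$ and $P_1 \times P_7$ are proportional, so $\scl{P_1 \times P_6}{P_2 \times P_4} = 0$ if and only if $\scl{P_1 \times P_7}{P_2 \times P_4} = 0$. Expanding $P_1 \times P_7$ via \Cref{eq:orthocenter} and the identity $A \times (B \times C) = B \scl{A}{C} - C \scl{A}{B}$, I expect the coefficient of $P_1$ to cancel, leaving $P_1 \times P_7$ as a linear combination of $P_2$ and $P_4$ alone (with $\delta_1(P_1, P_2, P_4)$ appearing as a scalar factor). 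The relation then follows from $\scl{P_2}{P_2 \times P_4} = \scl{P_4}{P_2 \times P_4} = 0$. The remaining two orthogonalities are handled by the analogous computations of $P_2 \times P_7$ and $P_4 \times P_7$, exploiting $P_5 \in P_2 \vee P_7$ and $P_3 \in P_4 \vee P_7$.

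For the existence and uniqueness of the cubic, I would show that $\rk \, \Phi(P_1, \dotsc, P_7) = 9$, which gives both existence (rank $\leq 9$ provides a nonzero kernel in $\p^9$) and uniqueness (rank $\geq 9$ makes the kernel one-dimensional, so the cubic is unique up to a scalar). Following the style of the paper, I would use the $\SO_3(\C)$-action from \Cref{invariance} to normalize $P_1$ to either $(1:0:0)$ or $(1:\iii:0)$ depending on whether $P_1 \in \iso$, leave $P_2$ and $P_4$ as points with generic coordinates, and substitute the formulas for $P_3, P_5, P_6, P_7$ into $\Phi$. A symbolic verification, in the spirit of the notebooks cited elsewhere, should then confirm the rank. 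The main obstacle I anticipate is this rank computation: one must carefully track the nondegeneracy hypotheses (distinctness of the seven points, nonvanishing of $\delta_1(P_1, P_2, P_4)$, noncollinearity of $P_1, P_2, P_4$, and nonproportionality of the three summands in \Cref{eq:orthocenter}) and saturate the ideal of order-$10$ minors accordingly, handling the two $\SO_3(\C)$-orbit cases separately; the degenerate specializations that violate these hypotheses should be recognized as falling under the rank-$8$ cases of \Cref{rank_8} or the positive-dimensional eigenscheme cases of \Cref{positive_dim}.
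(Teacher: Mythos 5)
Your proposal is correct and follows essentially the same route as the paper: the paper's proof likewise reduces everything to showing $\rk\,\Phi(P_1,\dotsc,P_7)=9$ after normalizing $P_1$ to $(1:0:0)$ or $(1:\iii:0)$ via the $\SO_3(\C)$-action, delegating the verification to a symbolic computation. Your additional by-hand derivation of the orthogonality relations (expanding $P_1\times P_7$ with the triple-product identity to get $\delta_1(P_1,P_2,P_4)\,(s_{12}P_4-s_{14}P_2)$) is a sound and slightly more explicit treatment of what the paper leaves to the notebook.
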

\begin{proof}
It is enough to show that the rank of the matrix $\Phi(P_1, \dots, P_7)$
is $9$ and for this computation we can assume that
$P_1$ is the point~$(1: 0: 0)$ or the point~$(1: \iii: 0)$;
see \nb{07}{F7}.
\end{proof}

\begin{corollary}
The locus of ternary cubic forms with a $(C_8)$ configuration has dimension~$6$.
\end{corollary}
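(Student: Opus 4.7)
The plan is to realise the $(C_8)$-locus as the image of a generically finite rational map from an irreducible variety of dimension $6$. Concretely, the preceding proposition exhibits a rational map
\[
  \psi \colon (\p^2)^3 \dashrightarrow \p^9
\]
defined as follows: given a triple $(P_1, P_2, P_4)$ in general position, produce $P_7$ via \Cref{eq:orthocenter}, set $P_3 = (P_1\vee P_2)\cap(P_4\vee P_7)$, $P_5 = (P_1\vee P_4)\cap(P_2\vee P_7)$ and $P_6 = (P_1\vee P_7)\cap(P_2\vee P_4)$, and finally take $\psi(P_1,P_2,P_4)$ to be the unique cubic $[f]\in\p^9$ such that $P_1, \dotsc, P_7 \in \Eig{f}$ (its existence and uniqueness is guaranteed once $\rk\,\Phi(P_1,\dotsc,P_7)=9$, cf.\ \Cref{lemma:construct_cubic}). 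Hence the domain of $\psi$ is a dense open subset of $(\p^2)^3$, of dimension~$6$.

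Next I would verify surjectivity of $\psi$ onto the $(C_8)$-locus. By \Cref{proposition:conf8_partA}, every cubic $[f]$ whose eigenpoints are in a $(C_8)$-configuration has four \emph{triple vertices}---those points lying on three of the six lines---which we may label $P_1, P_2, P_4, P_7$ so that \Cref{eq:orthocenter} holds, and the remaining three eigenpoints are then forced to be the intersections appearing in the definition of $\psi$. Consequently $[f]$ lies in the image of $\psi$.

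Finally, I would argue that $\psi$ is generically finite. A general point $[f]$ of the $(C_8)$-locus has a unique eigen-configuration, so any triple $(P_1', P_2', P_4')$ mapping to $[f]$ must consist of three of the four triple vertices of that configuration; since there are only $4\cdot 3\cdot 2 = 24$ such ordered choices, the generic fibre of $\psi$ is finite. By the fibre dimension theorem, the image therefore has dimension equal to $\dim \mathrm{dom}(\psi) = 6$, proving the corollary.

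The only subtlety I anticipate is confirming that $\psi$ is well defined on a Zariski-dense open subset: one must check that for generic $(P_1, P_2, P_4)$ the coefficients in \Cref{eq:orthocenter} do not all vanish, the resulting $P_7$ is distinct from $P_1, P_2, P_4$, the intersections giving $P_3, P_5, P_6$ are transverse and yield seven distinct points, and the matrix $\Phi(P_1, \dotsc, P_7)$ has rank exactly $9$. Each of these is an open non-vanishing condition that can be certified by a single symbolic example, in the same spirit as the computations underlying \Cref{proposition:conf8_partA}.
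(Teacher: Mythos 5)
Your proposal is correct and follows essentially the same route the paper intends: the corollary is stated as an immediate consequence of the two preceding propositions, which together show that the $(C_8)$ locus is the image of the generically finite parametrization by the free triple $(P_1,P_2,P_4)\in(\p^2)^3$. You have merely made explicit the surjectivity (via the identification of the four triple vertices $P_1,P_2,P_4,P_7$ and \Cref{eq:orthocenter}) and the finiteness of the fibres that the paper leaves implicit.
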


\subsection*{Comparison with ODECO tensors}
As mentioned in the introduction, a class of
symmetric tensors fitting in the framework of collinearities in the eigenscheme is represented by ODECO tensors, which were introduced by~\cite{Rob} and studied by \cite{BDHE, Koiran2021, Biaggi2022}.

Possibly after an $\SO_3(\C)$ transformation, such forms are of the type
\[
  f = \lambda_1 x^3 +\lambda_2 y^3 + \lambda_3 z^3
  \quad \text{for some } \lambda_1, \lambda_2, \lambda_3 \in \C \,,
\]
and, using the language of \cite{Rob}, they admit the following three pairwise orthogonal eigenvectors $Q_1, Q_2, Q_3 \in \C^3$
\[
  Q_1 = \left( \frac{1}{\lambda_1},0,0 \right) \,, \quad
  Q_2 = \left( 0,\frac{1}{\lambda_2},0 \right) \,, \quad
  Q_3 = \left( 0,0,\frac{1}{\lambda_3} \right) \,.
\]
The remaining $4$ eigenvectors are uniquely determined, precisely:
\[
  Q_4 = Q_1+Q_2+Q_3\,, \ Q_5 = Q_1+Q_2,\, \ Q_6 = Q_1+Q_3 \,, \ Q_7 = Q_2+Q_3\,.
\]
In particular, such eigenschemes are in a $(C_8)$ configuration, with the additional condition
\[
  \left\langle Q_i,Q_j \right\rangle = 0
  \quad \text{ for } (i, j) \in \{(1, 2), (1, 3), (2, 3), (1, 7), (2, 6), (3, 5)\} \,.
\]
The seven vectors fit into the description of \Cref{proposition:conf8_partA}, since $Q_3 $ is proportional to $Q_1 \times Q_2$ and this is coherent with \Cref{eq:orthocenter}
(because $\scl{Q_1}{Q_2} = 0$)
and
\[
  (Q_1 \vee Q_2) \times (Q_3 \vee Q_5) = 0 \,, \quad
  (Q_1 \vee Q_3) \times (Q_2 \vee Q_6) = 0 \,, \quad
  (Q_2 \vee Q_3) \times (Q_1 \vee Q_7) = 0 \,.
\]
In particular, the locus of ODECO forms is a $5$-dimensional sublocus of the $(C_8)$ locus.

\medskip
In Notebook \nb{07}{F8} it is possible to find a construction of a
generic cubic which has a $(C_i)$ configuration for all possible $i$.

\subsection*{Final remark} It is a challenging question to identify the irreducible components and the degrees of the loci of $(C_i)$ configurations.

\bibliographystyle{alphaurl}
\bibliography{biblio}

\end{document}